\numberwithin{equation}{section}
\newtheorem{theorem}{Theorem}[section]
\newtheorem{lemma}{Lemma}[section]
\newtheorem{corollary}{Corollary}[section]
\newtheorem{proposition}{Proposition}[section]
\newtheorem{remark}{Remark}[section]
\newtheorem{definition}{Definition}[section]
\newtheorem{assumption}{Assumption}[section]
\theoremstyle{definition}
\newtheorem{example}{Example}[section]
\newtheorem{notation}{Notation}[section]
\newcommand{\crit}{\operatorname{Crit}}
\newcommand{\res}{\operatorname{r\text{\'e}sit}}
\renewcommand{\Re}{\operatorname{Re}}
\title{Boundary of the central hyperbolic component I: dynamical properties}
  \author{Jie Cao}
\address{(J. Cao) School of Mathematical Sciences, Shenzhen University, Shenzhen, 518060, China}
\email{mathcj@foxmail.com}
  \author{Xiaoguang Wang}
\address{(X. Wang) School of Mathematical Sciences, Zhejiang University, Hangzhou, 310027, China}
\email{wxg688@163.com}
 \author{Yongcheng Yin}
\address{(Y. Yin) School of Mathematical Sciences, Zhejiang University, Hangzhou, 310027, China}
\email{yin@zju.edu.cn}
\begin{document}

\begin{abstract}
We study the dynamics of polynomial maps on the boundary of the central hyperbolic component $\mathcal H_d$. We prove the local connectivity of Julia sets and a rigidity theorem for maps on the regular part of $\partial\mathcal H_d$.  Our proof is based on the construction of Fatou trees and employs the puzzle technique as a key methodological framework.
These results are applicable to a larger class of maps for which the maximal Fatou trees equal the filled Julia sets.


\end{abstract}

 \subjclass[2020]{Primary 37F45; Secondary 37F10, 37F15}


\keywords{local connectivity, rigidity, Fatou tree, limb decomposition, puzzle}



\date{\today}

\maketitle





\section{Introduction}
Fix an integer $d\geq3$. 
The space of degree $d$ polynomials, up to affine conjugacy, can be written in the monic and $0$-fixed form:   $$\mathcal P_d=\left\{f(z)=a_1 z+a_2z^2+\cdots+a_{d-1}z^{d-1}+z^d \mid (a_1, a_2 \dots, a_{d-1})\in \mathbb C^{d-1}\right\}.$$ 
A polynomial $f\in\mathcal{P}_d$ is \emph{hyperbolic} if the orbit of each critical point tends to infinity or a bounded attracting cycle. 
All the hyperbolic polynomials form an open
subset of $\mathcal{P}_d$, and each connected component is called a \emph{hyperbolic component}. 
The \emph{central hyperbolic component} $\mathcal H_d$ in $\mathcal P_d$ is the hyperbolic component containing the map $z\mapsto z^d$. 
For $f\in\mathcal H_d$, all the critical points of $f$ in $\mathbb{C}$ are contained in the Fatou component containing $0$. 
While the polynomials in $\mathcal H_d$ have the simplest dynamical behavior, those on the boundary of $\mathcal H_d$ can exhibit fruitful dynamics.

It is well-known that $\mathcal H_d$ is a topological cell \cite{MilHyperCompo}. 
The family $\mathcal{P}_d$ consists of polynomials with at least two free critical points, which lead to the complexity of $\partial \mathcal{H}_d$. 
For $\partial\mathcal H_3$, Petersen and Tan Lei \cite{PT09} study its tame part, and Blokh et al. \cite{BOPT} study its combinatorial model.
Luo \cite{Luo} gives a combinatorial classification of the geometrically finite maps on $\partial\mathcal H_d$, and shows that $\overline{\mathcal H_d}$ is not a topological manifold with boundary if $d\geq4$. 
By a recent result of Gao, X. Wang and Y. Wang \cite{GWW}, the boundary $\partial \mathcal{H}_d$ has full Hausdorff dimension $2d-2$.  

It is worth noting that certain complex one-dimensional slices of $\mathcal H_d$ have favorable properties.
For example, the intersection of $\mathcal H_d$ and $\{\lambda z +z^d\mid \lambda\in \mathbb C\}$ is the unit disk. 
As a comparison, the intersection of $\mathcal H_d$ and $\{a z^{d-1}+z^d\mid a\in \mathbb C\}$ is a Jordan domain with fractal boundary; see \cite{Faught} for $d=3$ and \cite{Roesch} for $d\geq3$.

The boundary of $\mathcal H_d$ is divided into two parts: the regular part and the singular part (or the tame part and the wild part). That is, 
\begin{align*}
\partial_{\rm reg}\mathcal H_d &=\{f\in\partial\mathcal H_d\mid |f'(0)|<1\},\\
\partial_{\rm sing}\mathcal H_d &=\{f\in\partial\mathcal H_d\mid |f'(0)|=1\}.
\end{align*}
The regular part $\partial_{\rm reg}\mathcal H_d$ is large (with Hausdorff dimension $2d-2$), while the singular part $\partial_{\rm sing}\mathcal H_d$ is small (with Hausdorff dimension at most $2d-3$). 

Our first main theorem shows the tameness of the Julia sets for the maps on $\partial_{\rm reg}\mathcal H_d$. 

\begin{theorem}
\label{J-lc}
For any $f\in \partial_{\rm reg}\mathcal H_d$, the Julia set $J(f)$ is locally connected. 
\end{theorem}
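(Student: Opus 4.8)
The plan is to establish local connectivity of $J(f)$ by constructing a puzzle structure adapted to the dynamics on $\partial_{\rm reg}\mathcal H_d$ and then running a standard "shrinking puzzle pieces" argument, where the key geometric input comes from the Fatou tree construction announced in the abstract.

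First I would set up the combinatorial framework. For $f\in\partial_{\rm reg}\mathcal H_d$ we have $|f'(0)|<1$, so $0$ lies in an attracting (possibly super-attracting) cycle, and the immediate basin $U_0$ of $0$ — which in the hyperbolic component $\mathcal H_d$ contains all free critical points — survives on the boundary as a bounded Fatou component whose boundary is a Jordan curve (this is exactly where $|f'(0)|<1$ is used: it rules out parabolic/Cremer/Siegel behaviour at $0$). More generally every bounded Fatou component is eventually mapped to this cycle of components, and by a Fatou-coordinate / Böttcher-coordinate analysis together with the Yoccoz-type combinatorics, each bounded Fatou component is a Jordan domain with locally connected boundary. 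The point is to organize the filled Julia set $K(f)$ around these Fatou components: define the \emph{maximal Fatou tree} $T$ as the smallest connected, forward-invariant-up-to-iterate subset of $K(f)$ containing all bounded Fatou components (joined by the obvious internal rays/arcs). The crucial structural fact — which I would isolate as a lemma and which is special to the regular part — is that $T=K(f)$, i.e. the filled Julia set has empty interior outside the Fatou components and is "swept out" by the Fatou tree. This is the hypothesis highlighted at the end of the abstract ("maps for which the maximal Fatou trees equal the filled Julia sets").

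Second, granting $T=K(f)$, I would build Yoccoz puzzle pieces. Using the external rays landing at the (finitely many) cut points / co-cut points of the Fatou components together with suitable equipotentials, one obtains a nested sequence of puzzle partitions $\{P_n\}$; the pieces at depth $n$ are the connected components of the complement of the graph $G_n$ (equipotential of level $2^{-n}$ union the relevant preimages of the chosen rays). Local connectivity of $J(f)$ at a point $z$ then reduces, in the usual way, to showing $\operatorname{diam}(P_n(z))\to 0$, where $P_n(z)$ is the piece of depth $n$ containing $z$. For points whose orbit eventually lands in a Fatou component this is immediate from local connectivity of $\partial U_0$; the content is the points in $J(f)$ whose puzzle pieces are "critical" infinitely often or are non-renormalizable. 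Here I would split into cases exactly as in the Branner–Hubbard / Yoccoz / Kahn–Lyubich toolkit: (a) for pieces meeting no critical orbit past some depth, use the moduli telescope $\sum \operatorname{mod}(A_n)=\infty$ (an annulus argument) to force shrinking; (b) for non-renormalizable pieces hitting critical points infinitely often, use the Kahn–Lyubich covering-lemma / "complex a priori bounds" to get definite moduli; (c) for renormalizable pieces, pass to the renormalization, whose straightened map is again in the same class (or is quadratic-like with connected Julia set handled by MLC-type results), and induct. The fact that $T=K(f)$ guarantees there are no "wandering" chunks of interior to worry about, so the tree/puzzle combinatorics is finite at each level.

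The main obstacle — and the step I expect to consume most of the work — is case (b)/(c): controlling the geometry near the free critical points when they are recurrent in $J(f)$. Because $\mathcal P_d$ has $d-1$ critical points, several critical points can be simultaneously active and mutually interfering, so the clean one-critical-point annulus estimates do not apply verbatim; one needs a multi-critical version of the covering lemma and a careful bookkeeping of which critical points fall into which puzzle pieces (a "critical tableau" argument). I would handle this by a persistent-recurrence dichotomy: either some critical point is only finitely recurrent in $J(f)$, which reduces the effective number of active critical points and lets an induction on $d$ proceed; or all active critical points are persistently recurrent, in which case the Fatou-tree structure forces a renormalization, and the renormalized map has strictly fewer active critical points, again setting up the induction. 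Combining the shrinking of puzzle pieces obtained this way with the already-established local connectivity of the bounded Fatou components' boundaries yields that $K(f)$, and hence $J(f)=\partial K(f)$, is locally connected.
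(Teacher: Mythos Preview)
Your high-level strategy---build the Fatou tree, establish that it equals $K(f)$, then run puzzles with complex bounds---is the paper's route. But two steps in your outline are genuine gaps.

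First, you give no argument for $T=K(f)$; you merely ``isolate it as a lemma.'' This is not a formality but the crux of why $\partial_{\rm reg}\mathcal H_d$ is special. The paper's argument (Lemma~\ref{ren-maximal} and the proof of Theorem~\ref{Ymax=Kf}) runs as follows: if the maximal Fatou tree $Y^{k(f)}(f)$ were strictly smaller than $K(f)$, one can thicken it to a polynomial-like restriction $f:U\to V$ of degree strictly less than $d$ whose filled Julia set is exactly $Y^{k(f)}(f)$ and which contains $U_f(0)$; this restriction is stable under perturbation, so a nearby $g\in\mathcal H_d$ would have $\deg(g|_{U_g(0)})<d$, contradicting the definition of $\mathcal H_d$. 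Your definition of the tree (``smallest connected forward-invariant set containing all bounded Fatou components'') is also not the right object: the tree is grown outward from $U_f(0)$ through touching parabolic petals and their pullbacks, level by level; other attracting or parabolic cycles are \emph{not} included a priori, and the claim that they eventually land in the tree is precisely the content of $T=K(f)$, not part of its definition.

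Second, your case (c) is both unnecessary and wrong. Persistent recurrence of the critical nests does \emph{not} force a polynomial-like renormalization, and your fallback to ``MLC-type results'' for a straightened quadratic-like map is a genuine gap since MLC is open. The paper never enters this cul-de-sac: a wandering nest intersecting the tree either fails persistent recurrence, in which case the bounded-degree and elevator arguments (Lemmas~\ref{BD-1}--\ref{BD-reluctant} and~\ref{BD2elevator}) give shrinking, or it is persistently recurrent, in which case the KSS principal nest together with the Kozlovski--van Strien/Qiu--Yin/Kahn--Lyubich complex bounds (Theorem~\ref{p-nest}, Lemma~\ref{persistent-end}) produces definite annulus moduli directly---no renormalization, no induction on $d$. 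The only induction in the paper is on the \emph{level} $k$ of the Fatou tree, which strictly increases the number of captured critical points and hence terminates after at most $d-2$ steps.
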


\begin{remark}
Theorem \ref{J-lc} is false for $f\in\partial_{\rm sing}\mathcal H_d$, which has a Cremer point at $0$ (see \cite[Corollary 18.6]{Mil}).
\end{remark}

Our second main theorem establishes the rigidity theorem. 

\begin{theorem}
\label{hybrid4H}
Let $f, g\in \partial_{\rm reg}\mathcal H_d$. If there is a homeomorphism $\phi: \mathbb C\to \mathbb C$ such that $\phi\circ f=g\circ \phi$ and $\phi$ is holomorphic in the Fatou set with normalization $\phi'(\infty)=1$, then $f=g$. 
\end{theorem}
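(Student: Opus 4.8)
The plan is to read the combinatorics of $f$ and $g$ off the conjugacy $\phi$, feed these into the puzzle machinery to obtain a \emph{quasiconformal} conjugacy that is conformal on the Fatou set, and then kill its Beltrami coefficient using the absence of invariant line fields on the Julia set. I begin near $\infty$. Since $f\in\partial_{\rm reg}\mathcal H_d$ lies in the connectedness locus, all critical orbits of $f$ are bounded, $K(f)$ is connected, and the B\"ottcher coordinate $\varphi_f\colon\mathbb C\setminus K(f)\to\mathbb C\setminus\overline{\mathbb D}$ is a conformal isomorphism, tangent to the identity at $\infty$, conjugating $f$ to $w\mapsto w^d$; the same holds for $g$. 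The homeomorphism $\phi$ carries $K(f)$ onto $K(g)$, hence $\mathbb C\setminus K(f)$ conformally onto $\mathbb C\setminus K(g)$, so $\varphi_g\circ\phi\circ\varphi_f^{-1}$ is a conformal automorphism of $\mathbb C\setminus\overline{\mathbb D}$ that commutes with $w\mapsto w^d$ and, by $\phi'(\infty)=1$, is tangent to the identity at $\infty$; hence it is the identity. Thus $\phi=\varphi_g^{-1}\circ\varphi_f$ off $K(f)$, and $\phi$ maps the external ray $R_f(\theta)$ onto $R_g(\theta)$ for every angle $\theta$.

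By Theorem \ref{J-lc}, $J(f)$ and $J(g)$ are locally connected, so the B\"ottcher coordinates extend continuously to $\partial\mathbb D$ and yield Carath\'eodory loops $\gamma_f,\gamma_g$ semiconjugating multiplication by $d$ on $\mathbb R/\mathbb Z$ to $f$ and $g$; continuity of $\phi$ together with the previous step gives $\phi\circ\gamma_f=\gamma_g$, and injectivity of $\phi$ gives $\gamma_f(\theta)=\gamma_f(\theta')\iff\gamma_g(\theta)=\gamma_g(\theta')$. Hence $f$ and $g$ have the same rational lamination and, through the Fatou-tree structure, the same marked combinatorial data (the same Yoccoz-type puzzle). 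This is where the puzzle apparatus enters: from the coinciding combinatorics one builds, level by level and component by component, a quasiconformal conjugacy $\psi\colon\mathbb C\to\mathbb C$ between $f$ and $g$ that is conformal on each Fatou component and equals the identity in B\"ottcher coordinates. The two geometric ingredients are that the puzzle pieces shrink to points --- which already underlies Theorem \ref{J-lc} --- and the complexity control supplied by the Fatou trees (an enhanced-nest/spreading argument), which produces a uniform dilatation bound and hence a quasiconformal limit. (When the puzzle pieces have bounded geometry one can instead verify directly that the given $\phi$ is quasiconformal and take $\psi=\phi$.)

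Since $\psi\circ f=g\circ\psi$ with $f,g$ holomorphic, $\mu_\psi$ is $f$-invariant; it vanishes on the Fatou set, so it is an $f$-invariant Beltrami differential carried by $J(f)$ with $\|\mu_\psi\|_\infty<1$. The shrinking of the puzzle pieces also rules out a measurable invariant line field on $J(f)$ --- near a Lebesgue density point such a field would be almost parallel on a deep, almost round puzzle piece, which is incompatible with the distortion of the first return map to that piece --- so $\mu_\psi\equiv0$ and $\psi$ is conformal. An injective entire map with $\psi'(\infty)=1$ has the form $\psi(z)=z+c$; as $\psi$ sends the central fixed point $0$ of $f$ to the central fixed point $0$ of $g$, we get $c=0$, so $\psi=\mathrm{id}$ and $f=g$.

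The main obstacle is the middle step: constructing the quasiconformal conjugacy from the combinatorics, i.e.\ establishing combinatorial rigidity for maps whose maximal Fatou tree equals the filled Julia set. Proving that the puzzle pieces shrink and controlling their geometry needs the full Fatou-tree/puzzle machinery and a division into cases (finitely versus infinitely renormalizable behaviour, bounded versus unbounded combinatorial type), in the spirit of the rigidity methods of Kozlovski--Shen--van Strien. The hypothesis $f,g\in\partial_{\rm reg}\mathcal H_d$ is essential throughout: it makes $0$ a genuine attracting fixed point around which a puzzle can be erected, and it cannot be dropped, since on $\partial_{\rm sing}\mathcal H_d$ the point $0$ is a Cremer point, $J(f)$ need not be locally connected, and $f'(0)$ can be perturbed, so no rigidity holds there.
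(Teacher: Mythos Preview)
Your outline is broadly sound, but it diverges from the paper at the decisive step, and the divergence is not innocent. You use the given conjugacy $\phi$ only to read off the combinatorics and then propose to \emph{build a fresh} quasiconformal conjugacy $\psi$ by a pull-back/spreading argument. The paper does the opposite: it keeps $\phi$ and proves that $\phi$ itself is quasiconformal. This is exactly Proposition~\ref{hybird2qc}, proved via the quasiconformality criterion of Proposition~\ref{criterion-qc} (an analogue of \cite[Lemma~12.1]{KSS}): one partitions $\Omega=\bigcup_{k=1}^4 X_k$ with $X_1$ the Fatou set, $X_2$ the wandering Julia points with the bounded degree/elevator condition, $X_3$ the persistently recurrent points (handled by the principal nest and complex bounds of Theorem~\ref{p-nest}), and $X_4$ the preperiodic points; Claims~1--3 in that proof check the hypotheses of the criterion for $\phi$. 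Then Proposition~\ref{nilf} kills the Beltrami coefficient, and the affine normalization finishes as you describe.

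What this buys is precisely the elimination of your ``main obstacle''. You correctly identify that constructing $\psi$ from the coinciding laminations---obtaining a uniform dilatation bound for a sequence of pulled-back conjugacies and proving convergence---is the hard part of a combinatorial-rigidity argument; the paper sidesteps it entirely because a conjugacy is already in hand. Your parenthetical remark (``when the puzzle pieces have bounded geometry one can instead verify directly that the given $\phi$ is quasiconformal'') is in fact the paper's route in \emph{all} cases, not just the bounded-geometry one: the point of the criterion is that the persistently recurrent set $X_3$ is controlled through uniform shape bounds on the principal-nest pieces, while on $X_2$ one only needs $\underline H(\phi,x)<\infty$ pointwise together with $\operatorname{area}(\phi(X_2))=0$. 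So your proposal is not wrong, but it trades a direct quasiconformality estimate for an existence/convergence problem that you leave unaddressed; the paper's argument is both shorter and complete.
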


Rigidity, which means that a weak conjugacy implies a stronger conjugacy, is a fundamental topic in dynamical systems (see \cite{McM, Lyu}). 
Progresses have been made for 
real rational maps \cite{Shen}, 
Fibonacci maps \cite{Smania}, 
real polynomials \cite{KSS}, 
rational maps with Cantor Julia sets \cite{Zhai,YZ}, 
non-renormalizable polynomials \cite{KS}, 
proper box mappings \cite{Peng-Tan}, 
non-renormalizable Newton maps \cite{RYZ,DS}, 
bounded type cubic Siegel polynomials \cite{YZh}, etc. 
Beyond independent interests, 
rigidity also serves as a powerful tool to study the hyperbolic component boundaries \cite{QRWY,RWY,Wang21}. 
In the sequel to this paper, rigidity is applied to resolve a boundary extension problem and to study the distribution of cusps.

To prove the local connectivity and the rigidity, we will use the puzzle technique and Fatou trees. The puzzle technique, initiated by Yoccoz and refined over decades, has become a mature tool in complex dynamics. In the following, we outline the construction of Fatou trees. 

For $f\in\mathcal P_d$, let $U_f(0)$ denote the Fatou component of $f$ containing $0$ if $|f'(0)|<1$. 
When $f_n\in \mathcal H_d$ tends to some $f\in \partial_{\rm reg}\mathcal H_d$, the fixed point $0$ keeps attracting, and the pointed disk $(U_{f_n}(0),0)$ converges to $(U_f(0),0)$ in the sense of Carath\'eodory topology. \footnote{See \cite[\S5.1]{McM} for basic facts of the Carath\'eodory topology of pointed disks.}
With many pieces of $U_{f_n}(0)$ pinching off, at least one and at most $d-2$ critical points leave $U_f(0)$. To control these critical points, the Fatou components grow from $U_f(0)$ outward successively. Finally, taking closure gives the Fatou tree $Y^1(f)$ of level 1, which controls more critical points than $U_f(0)$. Some critical points may be so far from $U_f(0)$ that $Y^1(f)$ cannot control them either. However, the Fatou tree $Y^k(f)$ of higher level can control more critical points, and the Fatou tree $Y^{k(f)}(f)$ of highest level $k(f)$ will control all the critical points of $f$. 
See \S\ref{level-1-construction} for the precise construction of the Fatou tree of level $1$ and 
\S\ref{subsec-tree-k} for general levels; see also Example \ref{example-crit}. 

\begin{theorem} 
\label{Ymax=Kf}
For any $f\in \partial_{\rm reg}\mathcal H_d$, the maximal Fatou tree $Y^{k(f)}(f)$ is equal to the filled Julia set $K(f)$. 

Further, for each limit point $y$ in $Y^{k(f)}(f)$ (i.e. $y\in Y^{k(f)}_\infty(f)$), there is exactly one external ray landing at $y$. 
\end{theorem}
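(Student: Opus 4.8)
The plan is to establish the two inclusions $Y^{k(f)}(f)\subseteq K(f)$ and $K(f)\subseteq Y^{k(f)}(f)$ separately, and then to read off the statement about external rays from the resulting identification of $\widehat{\mathbb C}\setminus Y^{k(f)}(f)$ with the basin of infinity.

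The inclusion $Y^{k(f)}(f)\subseteq K(f)$ is the easy one. By the construction in \S\ref{level-1-construction} and \S\ref{subsec-tree-k}, each level of the tree is obtained from the previous one by adjoining the closures of finitely many bounded Fatou components of $f$ — precisely those capturing the critical points that escaped the previous level — and the maximal tree is the closure of this union. Bounded Fatou components of a polynomial lie in $K(f)$, which is closed, so $Y^{k(f)}(f)\subseteq K(f)$. Along the way I would record what is needed later: $Y^{k(f)}(f)$ is compact, connected, and full (its complement in $\widehat{\mathbb C}$ is connected), all of which follow from the inductive "growing outward" description.

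For the reverse inclusion I would first reduce $K(f)\subseteq Y^{k(f)}(f)$ to $J(f)\subseteq Y^{k(f)}(f)$: once the latter is known, every bounded Fatou component $V$ of $f$ has $\partial V\subseteq J(f)\subseteq Y^{k(f)}(f)$, and fullness of $Y^{k(f)}(f)$ forces $V\subseteq Y^{k(f)}(f)$; since $K(f)=J(f)\cup\bigcup_V V$ this finishes the reduction. To prove $J(f)\subseteq Y^{k(f)}(f)$ I would combine two ingredients. First, an approximation statement: pick $f_n\in\mathcal H_d$ with $f_n\to f$; since all finite critical points of $f_n$ lie in $U_{f_n}(0)$, Riemann--Hurwitz gives $f_n^{-1}(U_{f_n}(0))=U_{f_n}(0)$, so $K(f_n)=\overline{U_{f_n}(0)}$ and $J(f_n)=\partial U_{f_n}(0)$, and (by hyperbolicity of $f_n$) every repelling cycle of $f_n$ lies in $\partial U_{f_n}(0)$. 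Passing to the limit, every repelling cycle of $f$ — hence, by density, all of $J(f)$ — lies in $\limsup_n\overline{U_{f_n}(0)}$. Second, and this is the heart of the matter, $\limsup_n\overline{U_{f_n}(0)}\subseteq Y^{k(f)}(f)$: here the puzzle technique enters through the Carath\'eodory picture behind the tree. A piece pinching off from $U_{f_n}(0)$ either converges, for a suitable choice of base point, in the Carath\'eodory sense to a bounded Fatou component of $f$ — exactly the components adjoined while growing outward — or else, carrying no critical point in the limit, it admits a univalent pull-back structure which, via a Gr\"otzsch/modulus estimate along the pull-back chain, forces it to shrink to a point, which is precisely a limit point of the tree; and because each successive level strictly captures at least one more of the finitely many critical points, the construction terminates at level $k(f)$ and leaves nothing uncaptured. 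I expect this last step to be the main obstacle: one must control the diameters of the pinched pieces uniformly in $n$ (so that the degenerate ones really shrink to points rather than to nontrivial continua) and verify that the finitely many levels truly exhaust $\limsup_n\overline{U_{f_n}(0)}$.

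Finally, for the external rays, the identity $K(f)=Y^{k(f)}(f)$ makes $B_\infty:=\widehat{\mathbb C}\setminus K(f)$ a simply connected basin, so external rays $R_t$ are defined through the B\"ottcher coordinate. Let $y\in Y^{k(f)}_\infty(f)$, so $y$ lies in the maximal tree but in none of its Fatou components, whence $y\in J(f)$. On the one hand, the puzzle pieces around $y$ shrink to $\{y\}$ — again by a modulus argument, using that $y$ is an accumulation point of Fatou components of strictly decreasing diameter, or, in the finitely many cases where $y$ is a critical point on $J(f)$, by a direct argument exploiting the periodic structure there — so $\partial B_\infty$ is locally connected at $y$ and at least one external ray lands at $y$. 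On the other hand, the inductive construction realizes $y$ as the intersection of a nested sequence of connected, full subtrees of diameters tending to $0$, and the Fatou components accumulating at $y$ do so from a single side of this tree; a sector argument then shows that $B_\infty$ has exactly one access to $y$, hence exactly one ray lands there. The subtlety here is, once more, locating $y$ precisely enough in the combinatorial model to see this single-access property, and handling the case where $y$ carries a critical point.
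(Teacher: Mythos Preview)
Your approach differs substantially from the paper's, and the step you yourself flag as the main obstacle --- proving $\limsup_n\overline{U_{f_n}(0)}\subseteq Y^{k(f)}(f)$ --- is a genuine gap, not a technicality. Carath\'eodory convergence of $(U_{f_n}(0),0)$ to $(U_f(0),0)$ controls the limit only near the basepoint and says nothing about pinched-off pieces. Your proposed dichotomy (a pinched piece either converges to a Fatou component adjoined to the tree, or shrinks to a limit point of the tree) is precisely what must be proved: a priori such a piece could accumulate onto a Fatou component \emph{not} in the tree, or onto a nontrivial continuum in $J(f)$ lying outside the tree. No uniform-in-$n$ modulus estimate of the kind you invoke is available, and the puzzle machinery of \S\ref{sect-wandering} is built for a single fixed $f$, not for a degenerating family $f_n\to f$.

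The paper sidesteps all of this with a renormalization argument. Lemma~\ref{ren-maximal} constructs, for any $f\in\mathcal Y_d^0$, a polynomial-like restriction $f:W\to V$ whose filled Julia set is exactly $Y^{k(f)}(f)$; this uses the limb decomposition of $K(f)$ around the maximal tree (Theorem~\ref{thm-tree-level-k}(\ref{thm-tree-level-k-limb})) together with the thickening technique. If $Y^{k(f)}(f)\neq K(f)$ then $\deg(f|_W)<d$ by Lemma~\ref{deg-less-d}. Polynomial-like maps persist under perturbation, so for nearby $g\in\mathcal H_d$ one obtains $g:W_g\to V$ of the same degree $<d$; but $U_g(0)\subset W_g$ forces $\deg(g|_{W_g})\geq\deg(g|_{U_g(0)})=d$, a contradiction. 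Thus the approximation from $\mathcal H_d$ enters only at the very last line, after the structure of $Y^{k(f)}(f)$ for the fixed $f$ has done all the work.

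For the external-ray statement you miss a key simplification: by the definition of $k(f)$, the limit set $Y^{k(f)}_\infty(f)$ contains \emph{no} critical points and \emph{no} parabolic points, so your ``finitely many cases where $y$ carries a critical point'' never arise. Once $Y^{k(f)}(f)=K(f)$ is known, every limb $L_y$ with $y\in Y^{k(f)}_\infty(f)$ is trivial, and uniqueness of the landing ray follows from the combinatorics already established: for wandering $y$ the product $\prod_n\deg(f,f^n(y))$ in Theorem~\ref{thm-tree-level-k}(\ref{thm-tree-level-k-ray}) equals $1$; for periodic $y$ (necessarily repelling) the two limiting angles $\theta,\theta'$ in the argument of Lemma~\ref{U-prep} coincide. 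This is Lemma~\ref{one-ray}; no separate local-connectivity or prime-end access argument is needed.
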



For a polynomial $f$ with an attracting fixed point at $0$, we can also construct the Fatou trees of $f$ in the same way. 
By Example \ref{example-resit}, there is $f\in\mathcal{P}_d\setminus \overline{\mathcal{H}_d}$ such that $Y^{k(f)}(f)=K(f)$.  
That is, the converse of the first statement of Theorem \ref{Ymax=Kf} may not hold. 

The attracting Fatou component $U_f(0)$, whose closure can be viewed as the Fatou tree of level $0$, is proved to be a Jordan domain by Roesch and Yin \cite{RY}. 
Furthermore, the filled Julia set $K(f)$ has a limb decomposition around $U_f(0)$. Each limb is a continuum attaching at $\partial U_f(0)$. 
Parallel to this, we will use the puzzle technique to show the local connectivity of $Y^1(f)$. 
Inductively, we can define the Fatou tree $Y^k(f)$ of higher level and prove its local connectivity.

\begin{theorem} 
\label{thm-tree-level-k} 
Let $f\in\mathcal{P}_d$ satisfy that $0$ is attracting and the Julia set $J(f)$ is connected. Then for each $0\leq k\leq k(f)$, the followings hold. 
\begin{enumerate}
\item 
\label{thm-tree-level-k-lc}
The Fatou tree $Y^k(f)$ is locally connected. 

\item 
\label{thm-tree-level-k-limb}
\begin{enumerate}
\item There is a unique decomposition $$K(f)=Y^k(f)^\circ\sqcup \bigsqcup_{y\in \partial Y^k(f)} L_y$$ such that $L_y\cap Y^k(f)=\{y\}$ and $L_y$ is a continuum for any $y\in \partial Y^k(f)$. Here $Y^k(f)^\circ$ is the interior of $Y^k(f)$. 

\item 
For any $y\in \partial Y^k(f)$, there exist external rays landing at $y$. If $L_y\neq \{y\}$, there are finitely many sectors with root $y$ separating $L_y$ from $Y^k(f)$. 

\item 
\label{thm-tree-level-k-diam}
The diameters of $L_y$'s tend to zero. 
\end{enumerate}

\item 
\label{thm-tree-level-k-ray}
Let $y\in \partial Y^k(f)$. If $y$ is preperiodic, then it is either pre-repelling or pre-parabolic; 
if $y$ is wandering, then there are exactly $\prod_{n\in\mathbb{N}}\deg(f, f^n(y))$ external rays landing at $y$. 
\end{enumerate}
\end{theorem}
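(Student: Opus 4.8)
The plan is to argue by induction on $k$, with the base case $k=0$ supplied by the Roesch–Yin theorem \cite{RY} (which gives that $\overline{U_f(0)}$ is a Jordan domain) together with the standard limb decomposition of $K(f)$ around a Jordan Fatou component; this base case already contains parts (1), (2), and (3) for $k=0$, modulo a few standard facts about landing rays. For the inductive step, one assumes the statement holds for $Y^{k-1}(f)$ and wishes to deduce it for $Y^k(f)$. The key point is that $Y^k(f)$ is obtained from $Y^{k-1}(f)$ by attaching the finitely many Fatou components of the next "generation'' that carry the critical orbits not yet controlled; one then runs the Yoccoz puzzle machinery relative to these new components. Concretely, I would fix a puzzle: the level-$0$ pieces are bounded by an equipotential together with the finitely many external rays landing at the pre-periodic points on $\partial Y^{k-1}(f)$ and on the boundaries of the relevant new Fatou components, and deeper pieces are obtained by pulling back under $f$. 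The combinatorial input is that every critical point of $f$ lies in the closure of $Y^k(f)$ once $k=k(f)$, and more generally that $Y^k(f)$ controls a prescribed set of critical orbits.

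The heart of the argument is establishing (1), local connectivity of $Y^k(f)$. Here I would split the boundary $\partial Y^k(f)$ into (a) points lying on the boundary of some Fatou component belonging to $Y^k(f)$, and (b) points in $Y^k_\infty(f)$, the limit set. For type (a), local connectivity follows from the fact that each constituent Fatou component of $Y^k(f)$ is a Jordan domain (this itself is proved inductively, using $\overline{U_f(0)}$ Jordan as the seed and the fact that all other components in the tree are iterated preimages of $U_f(0)$ under maps that are, locally, proper of finite degree, together with the puzzle-piece nesting). For type (b) points $y$, I would show $\bigcap_n P_n(y)=\{y\}$, where $P_n(y)$ is the puzzle piece of depth $n$ containing $y$, via the usual dichotomy: either $y$ is (eventually) non-recurrent along the Fatou-tree frame and a Grötzsch/modulus argument (the "thickening'' of nested annuli, as in Yoccoz) forces shrinking; or $y$ is recurrent, in which case one performs a principal-nest / enhanced-nest analysis and invokes the absence of renormalization in this setting — the crucial structural fact being that the critical orbits controlled by $Y^k(f)$ all eventually land in the Fatou set (they are attracted to $0$ or to the attracting cycles inside the tree), so no critical point returns to a small puzzle piece infinitely often through the Julia set. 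This non-recurrence of active critical points is what makes the puzzle pieces shrink and is, in effect, where the hypothesis "$0$ is attracting'' (as opposed to parabolic) is used.

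Granting (1), the limb decomposition (2) follows by a now-standard argument: $K(f)\setminus Y^k(f)^\circ$ decomposes into connected components, each closure $L_y$ meets $Y^k(f)$ in a single point $y\in\partial Y^k(f)$ (uniqueness of $y$ uses that $\partial Y^k(f)$ is locally connected and has empty interior, so it cannot contain a nondegenerate sub-continuum separating $L_y$ from the rest), and the $L_y$ are continua by Torhorst-type reasoning. The existence of landing rays at each $y\in\partial Y^k(f)$ and the finiteness of separating sectors come from the puzzle: $y=\bigcap_n P_n(y)$ and the bounding rays of the $P_n(y)$ accumulate, by local connectivity of the corresponding graphs, at rays landing at $y$; for $L_y\neq\{y\}$ one gets finitely many such sectors because only finitely many rays of a given depth can land at a common point. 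The shrinking of diameters in (2)(c) is equivalent to local connectivity of $K(f)$ along $\partial Y^k(f)$, hence follows from (1) via the shrinking of the $P_n(y)$. Finally, for (3): if $y$ is preperiodic, it is the landing point of finitely many periodic rays, and a periodic point that is the landing point of a rational ray is repelling or parabolic (Douady–Hubbard / Milnor), but parabolic is excluded in the Julia set unless $y$ is pre-parabolic in the stated sense — so $y$ is pre-repelling or pre-parabolic; the count $\prod_{n}\deg(f,f^n(y))$ of rays at a wandering $y$ is obtained by pulling back the single ray at a deep non-critical iterate (a wandering point eventually stays away from the post-critical set, so some forward image has exactly one landing ray) and counting preimages with multiplicity along the orbit, the product being finite because only finitely many iterates of $y$ can be critical.

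The main obstacle I expect is the recurrent case inside the proof of (1): making the enhanced-nest / modulus estimates go through in the presence of possibly several free critical points that are active on the Julia set at intermediate levels (before $Y^k$ absorbs them). One must organize the induction so that at level $k$ the puzzle "sees'' exactly the right critical points — those whose orbits are trapped by $Y^{k-1}(f)$ are already in Fatou components and cause no recurrence, while those not yet trapped are precisely the ones the level-$k$ construction is designed to capture — and then show that the Yoccoz inequality-type bounds (or the KSS/QY enhanced nest) survive with this bookkeeping. Ruling out that such a critical point is persistently recurrent in the Julia set, despite eventually being swallowed by a higher Fatou tree, is the delicate point and will require carefully exploiting that every critical orbit is ultimately in the Fatou set.
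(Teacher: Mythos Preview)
Your overall architecture matches the paper's: induction on $k$ with the base case supplied by \cite{RY}, reduction of local connectivity to showing that ends of wandering nests meeting $Y^k_\infty(f)$ are singletons, and a Yoccoz-type puzzle to achieve this. But there is a genuine gap in how you propose to handle the recurrent case.

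You write that the ``crucial structural fact'' is that ``the critical orbits controlled by $Y^k(f)$ all eventually land in the Fatou set \ldots\ so no critical point returns to a small puzzle piece infinitely often through the Julia set,'' and your final paragraph says the delicate point is ``ruling out that such a critical point is persistently recurrent in the Julia set \ldots\ exploiting that every critical orbit is ultimately in the Fatou set.'' This premise is false in the stated generality. The theorem is for arbitrary $f\in\mathcal Y_d^0$, so there may be critical points in $J(f)$---including critical points in $K(f)\setminus Y^{k(f)}(f)$---whose nests are persistently recurrent, and a wandering nest $\mathbf P\in\mathcal P^\#$ can have such a critical nest in $\omega(\mathbf P)$. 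Persistent recurrence is \emph{not} ruled out; it is met head-on. The paper covers all non-persistently-recurrent situations by the bounded-degree condition (Lemmas \ref{BD-1}--\ref{BD-reluctant}), upgrades bounded degree to an elevator condition using $\mathbf P\in\mathcal P^\#$ (Lemma \ref{BD2elevator}), and in the genuinely persistently recurrent case invokes the KSS principal nest and its complex bounds (Theorem \ref{p-nest}, Lemma \ref{persistent-end}) to force $\operatorname{end}(\mathbf P)$ to be a point. The hypothesis that $0$ is attracting is not what makes pieces shrink here; the complex bounds do. Your proposed strategy of excluding persistent recurrence cannot succeed because the hypothesis you would need simply does not hold.

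A secondary gap: your derivation of (2)(c) from the shrinking of $P_n(y)$ is too quick. For $y\in\partial Y^k(f)\setminus Y^k_\infty(f)$ the limb $L_y$ is not contained in the nest around $y$, so shrinking of puzzle pieces does not by itself bound $\operatorname{diam}(L_y)$. The paper instead first produces the separating sectors at each $y$ (part (2)(b)) and then appeals to the equivalence in Lemma \ref{iff-diam-0}, applied after uniformizing $\mathbb C\setminus Y^k(f)$ and using uniform continuity of the inverse map, to convert the sector picture into diameter decay (see the proof of Corollary \ref{coro-limb}).
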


\begin{remark}
Theorems \ref{J-lc}, \ref{hybrid4H} and \ref{Ymax=Kf} also hold for $f\in\partial_{\rm sing}\mathcal H_d$, which has a parabolic fixed point at $0$. 

Theorem \ref{thm-tree-level-k} also holds for $f\in\mathcal P_d$ with a parabolic fixed point at $0$ and connected Julia set. 
\end{remark}

The paper is organized as follows. 

In \S\ref{sec-limb-Jordan}, we review the results about bounded Fatou components in \cite{RY}. We consider the limb decomposition of a full continuum around a Jordan disk, and show that the existence of cut rays is equivalent to the convergence of limb diameter (Lemma \ref{iff-diam-0}). 
This is useful for the proof of Theorem \ref{thm-tree-level-k}(\ref{thm-tree-level-k-diam}). 

In \S\ref{sec-tree-level-1}, we construct the Fatou tree of level 1, and give a limb decomposition of the filled Julia set around it. To prove its local connectivity, it suffices to check that each limb intersects it at a single point (Lemma \ref{Y1-lc}). The preperiodic case follows from \cite[Lemma 3.9]{Kiwi01} directly.  

In \S\ref{sect-wandering}, we address the wandering case through the puzzle technique.  
To make the discussion clearer, we focus on the nests of puzzle pieces. We also employ the elevator condition from \cite{RYZ}, which is stronger than the bounded degree condition. For those nests intersecting the Fatou tree, these two conditions are equivalent (Lemma \ref{BD2elevator}). 

In \S\ref{sec-lc}, we complete the proof of the local connectivity of the Fatou tree of level 1, and give more properties of the limb decomposition around it. 

In \S\ref{subsec-tree-k}, we construct the Fatou tree of higher level (including the maximal Fatou tree), and prove Theorems \ref{thm-tree-level-k} by induction. 
In \S\ref{subsec-tree-kf}, we prove Theorem \ref{Ymax=Kf} by constructing polynomial-like renormalization for the maximal Fatou tree (Lemma \ref{ren-maximal}). 
Then Theorem \ref{J-lc} follows from Theorems \ref{Ymax=Kf} and \ref{thm-tree-level-k} directly. 
In \S\ref{subsec-example}, we give non-hyperbolic polynomials $f$ with $Y^{k(f)}(f)=K(f)$ but $f\notin \partial\mathcal H_d$. 
In \S\ref{subsec-parab}, we extend Fatou trees to $f\in\mathcal P_d$ with a parabolic fixed point at $0$. 

In \S\ref{sec-rigidity}, we prove the rigidity for polynomials with $Y^{k(f)}(f)=K(f)$. As a corollary of it and Theorem \ref{Ymax=Kf}, we have Theorem \ref{hybrid4H}. 
The proof is divided into two parts: showing the conjugacy is quasiconformal through the quasiconformality criterion, and proving the Julia set carries no invariant line field by \cite[Proposition 3.2]{Shen}. 

In appendix, we give a proof of the quasiconformality criterion, which is an analogue of \cite[Lemma 12.1]{KSS}. 

\vspace{5pt}

\textbf{Acknowledgments.}
The research is supported by National Key R\&D Program of China (Grants No. 2021YFA1003200 and No. 2021YFA1003202) and National Natural Science Foundation of China (Grants No. 12131016 and No. 12331004). 

The authors thank Guizhen Cui for inspiring the current statement of Lemma \ref{iff-diam-0}. 
They also extend their gratitude to Mika Koskenoja for providing a copy of the archival paper \cite{Kelingos}.

\section{The limb decomposition around a Jordan disk}
\label{sec-limb-Jordan}

In this section, we will consider the limb decomposition of a full continuum around a Jordan disk, which generalizes the limb decomposition of the filled Julia set around a Fatou component. 
Moreover, we will give some necessary condition for $f\in\partial_{\rm reg}\mathcal H_d$. 

\subsection{Topological properties}
\begin{definition}
Let $K\subset\mathbb{C}$ be a full connected compact set with a Jordan domain $U\subset K$. 
A decomposition $K=U\sqcup \bigsqcup_{x\in \partial U} L_{x}$ is called a limb decomposition for $U$ if $L_{x}\cap\overline{U}=\{x\}$ and $L_{x}$ is a connected compact set for any $x\in \partial U$. 

We call $L_{x}$ the limb for $(K,U)$ with root $x$. 
If $K$ has a unique limb decomposition for $U$, we denote the limb for $(K,U)$ with root $x$ by $L_x(K,U)$. 
\end{definition}

\begin{theorem}
[\cite{RY}]
\label{RY}
Let $f$ be a polynomial such that $\deg(f)\geq2$ and $J(f)$ is connected, and let $U$ be a pre-attracting or pre-parabolic bounded Fatou component of $f$. 
Then the following properties hold. 
\begin{enumerate}
\item
\label{U-Jordan}
$U$ is a Jordan domain. 

\item $K(f)$ has a unique limb decomposition $K(f)=U\sqcup \bigsqcup_{x\in \partial U} L_{x}$ for $U$. 

\item 
\label{RY-separate}
When $L_{x}=\{x\}$, there is only one external ray landing at $x$; when $L_{x}\neq\{x\}$, two external rays landing at $x$ separate $L_{x}$ from $U$. 

\item The diameters of limbs tend to zero. 
\end{enumerate}
\end{theorem}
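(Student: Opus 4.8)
The plan is to reduce to a periodic Fatou component and then run a Yoccoz-type puzzle argument adapted to $U$. First, if $f^m(U)=V$ with $V$ periodic, then $f^m\colon U\to V$ is a proper holomorphic map of finite degree, so, granting the statement for $V$, one deduces it for $U$ by pullback: local connectivity of $\partial V$ together with control of the finitely many critical values of $f^m$ in $\overline U$ gives local connectivity of $\partial U$, injectivity of the Carath\'eodory extension is inherited, the limb decomposition of $K(f)$ around $U$ is obtained by taking for each $x\in\partial U$ the $f^m$-preimage component of the limb at $f^m(x)$ and intersecting with $K(f)$, and property~(4) is inherited because $f^m$ is uniformly continuous on $K(f)$. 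So assume from now on $f^p(U)=U$.

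Next I would build the puzzle. Spreading, by the dynamics of $f^p$, the linearizing coordinate of the attracting cycle (or the Fatou coordinate of the parabolic cycle on $\partial U$) equips $U$ with a system of internal rays and internal equipotentials, while the B\"ottcher coordinate at $\infty$ furnishes external rays and equipotentials. The boundary $\partial U$ carries a repelling or parabolic periodic point, which is the landing point of a nonempty finite set of external rays by \cite[Lemma 3.9]{Kiwi01}; I take the forward orbit and finitely many $f$-preimages of these rays, one small internal equipotential, one external equipotential, and finitely many internal rays as the defining graph $\Gamma_0$, set $\Gamma_n=f^{-n}(\Gamma_0)$, and let the puzzle pieces $P_n$ be the bounded components of the region cut out by $\Gamma_n$. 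Each $x\in\partial U$ then lies in a nest $\overline{P_0(x)}\supset\overline{P_1(x)}\supset\cdots$.

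The heart of the matter, and the step I expect to be the \emph{main obstacle}, is the shrinking of these nests: $\bigcap_n\overline{P_n(x)}=\{x\}$ for every $x\in\partial U$. One argues by the familiar dichotomy. If the forward orbit of $x$ never again lands in a puzzle piece on which the first-return map of $f$ is polynomial-like with connected Julia set containing a critical point, then a count of how often the orbit passes near the critical points forces $\sum_n\operatorname{mod}\bigl(P_n(x)\setminus\overline{P_{n+1}(x)}\bigr)=\infty$, and the Gr\"otzsch inequality gives shrinking; in the remaining renormalizable case one passes to the polynomial-like restriction, notes that its filled Julia set lies in a single limb or in $\partial U$, and recurses. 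It is the presence of up to $d-1$ critical points, possibly several inside $U$, that makes the combinatorial bookkeeping here delicate. Once shrinking is known, $\partial U$ is locally connected, so by Carath\'eodory the Riemann map $\psi\colon\mathbb D\to U$ extends continuously to $\overline{\mathbb D}$; it is injective on $\partial\mathbb D$, since a double point of the extension would be a cut point of $\overline U$ realizing two distinct accesses trapped in arbitrarily small common puzzle pieces, which shrinking forbids. Hence $\partial U$ is a Jordan curve, which is~(1).

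Finally, for~(2)--(4): the connected components of $K(f)\setminus\overline U$ are exactly the sets $W\cap K(f)$ with $W$ a complementary component of $\overline U$ in $\mathbb C$, and the puzzle structure shows each such $W$ is a wake whose closure meets $\overline U$ in a single point $x\in\partial U$, bounded --- when $W\cap K(f)\neq\emptyset$ --- by two external rays landing at $x$ together with an arc of $\partial U$; the points of $\partial U$ that are landing points of more than one ray are precisely these roots, and all of them are (pre)periodic. Setting $L_x=\overline{W\cap K(f)}$ for such $x$ and $L_x=\{x\}$ otherwise yields a limb decomposition; it is unique because any limb at $x$ must lie in the closure of the union of the complementary components of $\overline U$ hanging at $x$, which is exactly $L_x$. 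This gives~(2), and~(3) is immediate from the bounding rays. For~(4), if there were infinitely many limbs of diameter at least $\varepsilon>0$, their roots would accumulate at some $x_\infty\in\partial U$, yet a limb of diameter $\ge\varepsilon$ is enclosed by a pair of external rays bounding a wake from a finite list of ``large'' wakes, forcing infinitely many distinct limbs into one wake --- a contradiction. (Equivalently, shrinking of puzzle pieces yields local connectivity of $K(f)$ near $\partial U$, whence~(4) by a compactness argument.)
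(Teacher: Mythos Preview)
Your approach differs fundamentally from the paper's. The paper does not reprove this theorem: Properties~(1) and~(3) are simply cited as Theorems~A and~B of \cite{RY}, Property~(2) follows from the paper's own Lemma~\ref{limb-K-U} (density of landing points on $\partial U$ is supplied by density of repelling periodic points), and Property~(4) is deduced from Property~(3) via Lemma~\ref{iff-diam-0}, a purely topological equivalence between the existence of cut rays and the vanishing of limb diameters. You instead sketch an ab~initio puzzle proof, essentially re-deriving the Roesch--Yin theorem.

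Your sketch has two genuine gaps. First, the renormalizable branch of the shrinking argument is not a proof: ``one passes to the polynomial-like restriction, notes that its filled Julia set lies in a single limb or in $\partial U$, and recurses'' does not terminate for infinitely renormalizable maps, and in that case $\bigcap_n\overline{P_n(x)}$ can genuinely fail to be a point. The actual mechanism in \cite{RY} is not recursion but the observation that the small filled Julia set meets $\overline U$ in at most one point, so $\overline{P_n(x)}\cap\partial U$ shrinks even when $\overline{P_n(x)}$ does not; your formulation misses this and suggests a descent that may never stop. Second, your argument for~(4) is circular: you assert that a limb of diameter $\ge\varepsilon$ is enclosed in a wake drawn from a ``finite list of large wakes'', but large Euclidean diameter does not force large angular width, so nothing supplies the finiteness. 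The paper's Lemma~\ref{iff-diam-0} handles this carefully by passing to a Hausdorff limit of the offending limbs and splitting into the cases $L\subset\partial U$ and $L\not\subset\partial U$; in each case a single pair of cut rays traps all but finitely many of the limbs away from the limit, yielding the contradiction you are reaching for.
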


\begin{proof}
Property 1 is \cite[Theorem A]{RY}. 
Note that the repelling periodic points are dense on $\partial U$. Then Lemma \ref{limb-K-U} gives Property 2. 
Property 3 is \cite[Theorem B]{RY}. 
Property 4 follows from Property 3 and Lemma \ref{iff-diam-0}. 
\end{proof}

For a full connected compact set $K\subset\mathbb{C}$, which is not a singleton, there is a unique conformal map $\phi_K:\mathbb{C}\setminus K\to \mathbb{C}\setminus\overline{\mathbb{D}}$ such that $\phi_K(z)/z\to 1$ as $z\to\infty$. The external ray with angle $\theta\in\mathbb{R}/\mathbb{Z}$ is defined to be $R_K(\theta) = \phi_K^{-1}((1,\infty)e^{2\pi i \theta})$. We say $R_K(\theta)$ lands at $x\in\partial K$ if $\overline{R_K(\theta)}\cap \partial K = \{x\}$. 

\begin{lemma}
\label{limb-K-U}
Let $K\subset\mathbb{C}$ be a full connected compact set with a Jordan domain $U\subset K$. 
If the points with external rays landing are dense in $\partial U$, then $K$ has a unique limb decomposition for $U$. 
\end{lemma}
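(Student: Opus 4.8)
The plan is to construct the limb decomposition by hand from the external ray structure and then verify uniqueness separately. First I would fix the Riemann map $\psi: \mathbb{D} \to U$ extending to a homeomorphism $\overline{\mathbb{D}} \to \overline{U}$ (since $U$ is a Jordan domain), so that $\partial U$ is parametrized by angles $t \in \mathbb{R}/\mathbb{Z}$, writing $\gamma(t) = \psi(e^{2\pi i t})$. For each $x \in \partial U$ I want to define $L_x$ as the connected component of $K \setminus U$ whose closure contains $x$, together with $x$ itself — more precisely, $L_x = \{x\} \cup \bigcup\{C : C$ a connected component of $K \setminus \overline{U}$ with $x \in \overline{C}\}$. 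The first task is to show that every component $C$ of $K \setminus \overline{U}$ has $\overline{C} \cap \partial U$ equal to a single point; this is where the density hypothesis enters. Indeed, if $\overline{C}$ met $\partial U$ in two distinct points $\gamma(t_1), \gamma(t_2)$, these would cut $\partial U$ into two arcs $A_1, A_2$; by density there is a point $\gamma(s)$ in the interior of (say) $A_1$ at which an external ray $R_K(\theta)$ lands. That ray, together with $C \cup \{\gamma(t_1),\gamma(t_2)\}$ and an arc inside $\overline U$, would enclose $\gamma(s)$, contradicting that $R_K(\theta)$ escapes to infinity without crossing $K$ — a standard Jordan-curve separation argument using that $R_K(\theta) \subset \mathbb{C} \setminus K$ and $\gamma(s) \in K$.

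Once each component of $K \setminus \overline{U}$ is "rooted" at a single boundary point, I would check that $L_x$ as defined is compact and connected: connectedness is clear since each piece shares the point $x$; compactness follows because any sequence in $L_x$ either stays in finitely many components (fine) or visits infinitely many, in which case those components must shrink — their diameters tend to zero, since otherwise a subsequence of components of diameter bounded below would, by compactness of the hyperspace of subcontinua, converge to a nondegenerate subcontinuum of $K \setminus U$ meeting $\partial U$ only at $x$ but not itself a component, again producing a separation contradiction against the density hypothesis. I would then verify $L_x \cap \overline{U} = \{x\}$ (immediate from the construction and the single-root property) and that $K = U \sqcup \bigsqcup_{x \in \partial U} L_x$ is genuinely a disjoint union covering $K$: every point of $K$ is either in $U$, on $\partial U$ (hence in $L_x$ for $x$ itself), or in some component of $K \setminus \overline U$ (hence in exactly one $L_x$).

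For uniqueness, suppose $K = U \sqcup \bigsqcup_{x} L'_x$ is another limb decomposition. Each $L'_x$ is a continuum meeting $\overline U$ only at $x$, so $L'_x \setminus \{x\} \subset K \setminus \overline U$ is a union of components of $K \setminus \overline U$; by the single-root property each such component is rooted at $x$, so $L'_x \subseteq L_x$. Conversely every component rooted at $x$ must lie in $L'_x$ since the $L'$'s partition $K \setminus U$ and such a component is connected and cannot meet $\overline U$ except possibly at $x$ — forcing it into the limb $L'_x$. Hence $L'_x = L_x$. The main obstacle I expect is the shrinking argument in the compactness step: controlling infinitely many components accumulating at a single root point $x$ and ruling out a "ghost limb" requires care with the Hausdorff/Vietoris topology on subcontinua and a clean separation lemma; it is essentially the same mechanism as in Theorem \ref{RY} but must be phrased purely topologically here, without recourse to external rays of a polynomial dynamical system.
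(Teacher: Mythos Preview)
Your approach is genuinely different from the paper's. The paper does not define $L_x$ through components of $K\setminus\overline U$; instead it takes two sequences of landing points on $\partial U$ approaching $x$ from each side and sets $L_x$ equal to the nested intersection of closed Jordan disks bounded by the corresponding external rays, an external equipotential arc, internal rays, and an internal equipotential arc. Compactness and connectedness of $L_x$ are then automatic (decreasing intersection of continua), and the conditions $L_x\cap\overline U=\{x\}$ and disjointness follow by inspection. This sidesteps entirely the closedness issue you wrestle with.

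Your route can be made to work, but two steps need repair. First, the single-root argument as written is not a separation argument: $C$ is only a continuum, not an arc, so ``$C\cup\{\gamma(t_1),\gamma(t_2)\}$ together with an arc in $\overline U$'' is not a Jordan curve and you cannot invoke enclosure directly. The clean version uses \emph{two} landing rays, one on each of the arcs $A_1,A_2$, joined by an external equipotential arc and an internal arc in $U$, to form an honest Jordan curve $J$ disjoint from $C$; then $\gamma(t_1)$ and $\gamma(t_2)$ lie on opposite sides of $J$, contradicting the connectedness of $C$.

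Second, and more seriously, your compactness step has a gap. You claim that if infinitely many components $C_n$ rooted at $x$ have diameters bounded below, their Hausdorff limit $E$ yields a ``separation contradiction'', but you never say what that contradiction is. In fact there is none at this level of generality: components of $K\setminus\overline U$ can accumulate onto one another, and $E$ may simply sit inside some other component rooted at $x$. The fix is to abandon the shrinking claim and prove closedness of $L_x$ directly: if $y_n\in L_x$ and $y_n\to y\ne x$, use two landing rays separating $x$ from any other boundary point to show first that $y\notin\partial U\setminus\{x\}$, and then that the component of $K\setminus\overline U$ containing $y$ must itself be rooted at $x$ (else separate $x$ from that root). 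This is the same mechanism, deployed pointwise rather than through hyperspace limits.
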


\begin{proof}
Given $x\in\partial U$. Choose two sequences of points on $\partial U$ with external rays landing and tending to $x$ from two sides. Let $L_{x}$ be the intersection of Jordan domains bounded by external rays and external equipotential curves in $\mathbb{C}\setminus K$ and internal rays and internal equipotential curves in $U$. 
This gives the construction of $L_{x}$. The residual is clear. 
\end{proof}


\begin{lemma}
\label{iff-diam-0}
Let $K\subset\mathbb{C}$ be a full connected compact set with a Jordan domain $U\subset K$. Suppose $K$ has a limb decomposition $K=U\sqcup \bigsqcup_{x\in \partial U} L_{x}$. Then the following two conditions are equivalent. 
\begin{enumerate}
\item  When $L_{x}=\{x\}$, there is only one external ray landing at $x$; when $L_{x}\neq\{x\}$, two external rays landing at $x$ separate $L_{x}$ from $U$. 

\item  The diameters of limbs tend to zero. 
\footnote{Compare \cite[Theorem 7]{Roesch}.}
\end{enumerate}
\end{lemma}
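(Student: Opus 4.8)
The plan is to prove both implications, with (1) $\Rightarrow$ (2) being the substantial direction and (2) $\Rightarrow$ (1) being a fairly soft topological argument. For (1) $\Rightarrow$ (2), the idea is to run the standard ``shrinking of limbs'' argument adapted to this abstract setting. First I would fix $\varepsilon>0$ and suppose for contradiction that there is a sequence of distinct roots $x_n\in\partial U$ with $\operatorname{diam} L_{x_n}\geq\varepsilon$. Passing to a subsequence, $x_n\to x_*\in\partial U$. For each $n$ with $L_{x_n}\neq\{x_n\}$, condition (1) gives two external rays $R_K(\theta_n^-),R_K(\theta_n^+)$ landing at $x_n$ that separate $L_{x_n}$ from $U$; let $S_n$ be the closed sector (bounded by these two rays and the arc of some fixed equipotential) containing $L_{x_n}$. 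The key quantitative input is that the ``angular width'' $|\theta_n^+-\theta_n^-|$ of $S_n$ is bounded below: since $L_{x_n}$ has diameter $\geq\varepsilon$ and $L_{x_n}\subset S_n\cap K$ while $S_n$ is pinched at the single point $x_n\in\partial U$ with $U$ a Jordan domain, a sector of tiny angular width would have small Euclidean diameter near its root (one can make this precise using the conformal map $\phi_K$ and a normal-families/compactness argument, or via the fact that the harmonic measure of $L_{x_n}$ from $\infty$ is comparable to the angular width). Then infinitely many of the $\theta_n^\pm$ accumulate, forcing two of these closed sectors $S_n, S_m$ ($n\neq m$) to have overlapping interiors; since their roots $x_n\neq x_m$ lie on $\partial U$ and the sectors are disjoint from $U$, this is impossible unless one sector contains the other's root, contradicting $L_{x_m}\cap\overline U=\{x_m\}$ together with the fact that the rays bounding $S_n$ land only at $x_n$. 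This yields the contradiction, so $\operatorname{diam} L_{x_n}\to 0$.

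For (2) $\Rightarrow$ (1), I would argue as follows. Fix $x\in\partial U$. Because $U$ is a Jordan domain, $\partial U$ is locally connected, and the diameter-shrinking of the $L_x$'s (condition (2)) together with $K=\overline U\sqcup\bigsqcup L_x$ lets me show that $\partial K$ is locally connected: a small connected neighborhood of a boundary point is obtained from a small arc of $\partial U$ by attaching the limbs rooted on that arc, all of which have small diameter. Local connectivity of $\partial K$ gives, by Carath\'eodory, that every external ray $R_K(\theta)$ lands and the landing point map is continuous and surjective onto $\partial K$. Now, if $L_x=\{x\}$: the point $x$ is a cut point of $K$ only if $L_y\neq\{x\}$ meets it, which it does not by the disjointness in the decomposition, so $x$ is not a cut point of $K$; a non-cut point of a locally connected full continuum has exactly one external ray landing at it. If $L_x\neq\{x\}$: here $x$ \emph{is} a cut point separating $L_x\setminus\{x\}$ from $U\setminus\{x\}$ (indeed from $K\setminus(L_x\cup\{x\})$). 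The complement $\mathbb C\setminus\{x\}$ has the component carrying $L_x$ and a component carrying $\overline U$; the set of external angles landing at $x$ is a nonempty finite or at least closed subset of the circle, and consecutive landing angles $\theta^-,\theta^+$ cut the plane into sectors, one containing $L_x$ and an adjacent one containing $U$. Choosing $\theta^-$ and $\theta^+$ to be the two angles bounding the sector that contains $U$ gives two rays landing at $x$ that separate $L_x$ from $U$, which is (1).

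\begin{remark}
The delicate point in the hard direction is converting ``$\operatorname{diam} L_{x_n}\geq\varepsilon$'' into ``angular width of the separating sector $\geq\delta(\varepsilon)>0$'' uniformly in $n$. I expect this to be the main obstacle: one needs a lower bound on harmonic measure (equivalently on the $\phi_K$-image angular width) of a continuum of definite size sitting behind a pinch point of a Jordan domain, uniform over the location of the pinch. This can be handled by a compactness argument --- if not, one extracts limits of the rescaled configurations and of $\phi_K$ and produces a degenerate limb of zero angular width but positive diameter attached at a point of the Jordan curve $\partial U$, which contradicts that $\overline U$ is non-degenerate and $\partial U$ is a Jordan curve. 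Once this uniform bound is in hand, the pigeonhole/overlap step closes the argument.
\end{remark}
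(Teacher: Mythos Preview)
Your proposal has genuine gaps in both directions, and you have the relative difficulty of the two implications reversed.

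\textbf{The direction $(2)\Rightarrow(1)$ is not ``fairly soft''; your argument is incorrect.} You claim that the shrinking of limb diameters forces $\partial K$ to be locally connected. This is false: condition (2) says nothing about the internal structure of any individual limb. Take $U=\mathbb{D}$, let $L_1$ be any full continuum attached at $1$ with non-locally-connected boundary (a filled comb, a Cremer Julia set, etc.), and let all other limbs be trivial. Then condition (2) holds vacuously, yet $\partial K$ is not locally connected, so you cannot invoke Carath\'eodory to produce landing rays. The paper instead builds the rays by hand: starting from an arc $\gamma_0\subset\mathbb{C}\setminus\overline U$ joining $x$ to $\infty$ (or, when $L_x\neq\{x\}$, a Jordan curve through $x$ separating $\overline U\setminus\{x\}$ from $L_x\setminus\{x\}$), one iteratively reroutes $\gamma_k$ around each limb it crosses using the Riemann map of the complement of that limb. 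The shrinking-diameter hypothesis guarantees these modifications converge to a path $\gamma\subset\mathbb{C}\setminus K$ landing at $x$; then Lindel\"of produces an external ray landing at $x$, and uniqueness (in the trivial-limb case) comes from the F. and M. Riesz theorem. This is the substantive direction.

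\textbf{For $(1)\Rightarrow(2)$, the gap you flag is real and your proposed fix does not close it.} You need $\operatorname{diam}(L_{x_n})\geq\varepsilon$ to force a uniform lower bound on the angular width of the separating sector, but a small-angular-width sector can have large Euclidean diameter when $\partial K$ is not locally connected (which, as above, you cannot assume). Your compactness sketch --- ``a degenerate limb of zero angular width but positive diameter \dots contradicts that $\partial U$ is a Jordan curve'' --- does not produce a contradiction: nothing about $U$ being a Jordan domain prevents such a configuration. The paper bypasses the angular-width estimate entirely: pass to a Hausdorff-convergent subsequence $L_{x_k}\to L$ with $\operatorname{diam}(L)\geq\varepsilon$, and split into two cases. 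If $L\subset\partial U$, pick two interior points of the arc $L$ and use the external rays landing there (supplied by condition (1)) to trap all but finitely many $L_{x_k}$ on one side, contradicting $L_{x_k}\to L$. If $L\not\subset\partial U$, pick $y\in L\setminus\partial U$; then $y$ lies in some limb $L_a$, and the two rays separating $L_a$ from $U$ (again from condition (1)) confine every $L_{x_k}$ with $x_k\neq a$ to the other side, so $y$ cannot be a limit point of the $L_{x_k}$. Either way you get a contradiction without ever estimating harmonic measure.
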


\begin{proof} 
$1\Longrightarrow 2$. 
If it is not true, there exists an $\varepsilon>0$ and a non-repeating sequence $\{x_k\}_{k\in\mathbb{N}}\subset \partial U$ such that $\operatorname{diam}(L_{x_k})\geq\varepsilon$ for any $k\in\mathbb{N}$. After passing to a subsequence, the sequence $\{L_{x_k}\}_{k\in\mathbb{N}}$ has a limit $L$ in Hausdorff topology.
There are two cases: either $L\subset \partial U$ or $L\not\subset \partial U$. 

For the first case, $L$ is an arc on $\partial U$. Choose an arc $\gamma$ in the interior of $L$. There are external rays $R_1$ and $R_2$ landing at the two end points $a_1$ and $a_2$ of $\gamma$ respectively. For $x\in\{x_k\}_{k\in\mathbb{N}}\setminus\{a_1,a_2\}$, the set $L_{x}\setminus\{x\}$ 
is contained in one component of $\mathbb{C}\setminus(R_1\cup\gamma\cup R_2)$. 
Hence the Hausdorff limit of $\{L_{x_k}\}_{k\in\mathbb{N}}$ cannot be the whole set $L$. So this case is impossible.

For the second case, take $y\in L\setminus \partial U$. Note that $y\in K\setminus \overline{U}$. By the limb decomposition, the point $y$ is in some limb $L_{a}$. Then there are external rays $R_1$ and $R_2$ landing at $a$, and $L_{a}\setminus\{a\}$ is contained in one component of $\mathbb C\setminus\overline{R_1\cup R_2}$. It follows that $L_{x}$ is contained in the other component of $\mathbb C\setminus\overline{R_1\cup R_2}$ for each $x\in\{x_k\}_{k\in\mathbb{N}}\setminus\{a\}$. Therefore $y$ cannot be an accumulation point of $\{L_{x_k}\}_{k\in\mathbb{N}}$. So this case cannot happen either. 

$2\Longrightarrow 1$. 
First, consider $L_{x}=\{x\}$. Choose a simple path $\gamma_0\subset \mathbb{C}\setminus \overline{U}$  connecting $x$ and $\infty$. We will modify $\gamma_0$ to get a path $\gamma$ in $\mathbb{C}\setminus K$ converging to $x$. 
If $\gamma_0$ intersects with finitely many limbs, let $\gamma$ be a segment of $\gamma_0$ near $x$ in $\mathbb{C}\setminus K$; otherwise, let $\{L_{x_k}\}_{k\in\mathbb{N}}$ be all limbs intersecting with $\gamma_0$. 
Inductively, we will modify $\gamma_k$ to get $\gamma_{k+1}$ as follows. 

Note that $L_{x_k}$ is full. There is conformal map $\phi_k:\mathbb{C}\setminus L_{x_k}\to \mathbb{C}\setminus \overline{\mathbb{D}}$. 
By \cite[Corollary 6.4]{McM}, if $\alpha$ is a path in $\mathbb{C}\setminus L_{x_k}$ converging to a point in $\partial L_{x_k}$, then $\phi_k(\alpha)$ converges to a point in $S^1$. 
Hence $\phi_k(U)$ is a Jordan domain. 
It follows that $\mathbb{C}\setminus \phi_k(\mathbb{C}\setminus K)$ has a limb decomposition 
$$\mathbb{C}\setminus \phi_k(\mathbb{C}\setminus K) = \phi_k(U)\sqcup \overline{\mathbb{D}} \sqcup \bigsqcup_{y\in\partial U\setminus \{x_k\}} \phi_k(L_{y}).$$
Let $a_k$ (resp. $b_k$) be the first intersection point of $\gamma_k$ and $L_{x_k}$ starting from $\infty$ (resp. $x$). By \cite[Corollary 6.4]{McM} again, the path $\phi_k(\gamma_k(a_k,\infty))$ and $\phi_k(\gamma_k(b_k,x))$ converge to points in $S^1$. 
For $r_k>1$ close to $1$ enough, 
there is a unique subarc $\Gamma_k$ of $r_k S^1$ intersecting with $\phi_k(\gamma_k(a_k,\infty))$ (resp. $\phi_k(\gamma_k(b_k,x))$) only at one end $a_k'$ (resp. $b_k'$) and contained in $\mathbb{C}\setminus \phi_k(\mathbb{C}\setminus K)$. 
Here $\gamma_k(a,b)$ is the open segment of $\gamma_k$ between $a$ and $b$ for $a\neq b$ in $\gamma_k\cup\{\infty,x\}$. 
Now let $$\gamma_{k+1} = \gamma_k(\infty,\phi_k^{-1}(a'_k))\cup \phi_k^{-1}(\Gamma_k)\cup \gamma_k(\phi_k^{-1}(b'_k),x).$$

Because $\operatorname{diam}(L_{x_k})\to 0$ as $k\to \infty$, for any $\varepsilon>0$, the modification of $\gamma_k$ is in $\mathbb{D}(x,\varepsilon)$ for $k$ large enough. Therefore $\gamma_k$ has a limit $\gamma$, which is in $\mathbb{C}\setminus K$ and converges to $x$. 

By the Lindel\"of theorem \cite[Theorem 6.3]{McM}, there is an external ray landing at $x$. To show the uniqueness, assume there are two external rays $R_1$ and $R_2$ landing at $x$. 
Let $W_1$ (resp. $W_2$) denote the connected component of $\mathbb{C}\setminus \overline{R_1\cup R_2}$ containing (resp. disjoint from) $U$. For any $y\in \partial U\setminus \{x\}$, we have $L_{y} \subset W_1$. It follows that $K\cap W_2=\emptyset$. This contradicts the F. and M. Riesz theorem \cite[Theorem 6.2]{McM}, which says the radial limits of a Riemann mapping are nonconstant on any set of positive measure. 

Second, consider $L_{x}\neq \{x\}$. 
First of all, we need to construct a Jordan curve separating $\overline{U}\setminus \{x\}$ and $L_{x}\setminus \{x\}$. 
By composing a proper homeomorphism of $\widehat{\mathbb{C}}$, we may assume $U=\widehat{\mathbb{C}}\setminus \overline{\mathbb{D}}$. 
Let $L$ be the convex hull of $L_{x}$ in the plane. 
It follows from $L_{x}\cap \overline{U}=\{x\}$ that $L\cap \overline{U}=\{x\}$. Now it is easy to construct a Jordan curve $\gamma_0$ separating $\overline{U}\setminus \{x\}$ and $L\setminus \{x\}$, which also separates $\overline{U}\setminus \{x\}$ and $L_{x}\setminus \{x\}$. 
Similar to the first case, we can modify $\gamma_0$ to get a Jordan curve $\gamma$ in $(\widehat{\mathbb{C}}\setminus K) \cup\{x\}$ separating $\overline{U}\setminus \{x\}$ and $L_{x}\setminus \{x\}$. Then the Lindel\"of theorem \cite[Theorem 6.3]{McM} gives the desired external rays. 
\end{proof}

\subsection{Dynamical properties}
\begin{theorem}
[\cite{RY}]
\label{dynam-prop}
Let $f$ be a polynomial such that $\deg(f)\geq2$ and $J(f)$ is connected, and let $U$ be a pre-attracting or pre-parabolic bounded Fatou component of $f$. 
\begin{enumerate}
\item If $f(U)=U$, then $f|_{\partial U}$ is topologically conjugate to $z^{\deg(f|_U)}$ on $S^1$. 

\item 
\label{dynam-prop-prep}
Every preperiodic $x\in \partial U$ is either pre-repelling or pre-parabolic.

\item 
\label{dynam-prop-crit}
For any $x\in \partial U$, the limb $L_{x}(K(f),U)$ is not reduced to $\{x\}$ if and only if there is an integer $n\geq 0$ such that $L_{f^n(x)}(K(f),f^n(U))$ contains a critical point.
\end{enumerate}
\end{theorem}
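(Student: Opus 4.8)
The three properties are drawn from \cite{RY}; here I indicate a plan. For \emph{Property~1}: by Theorem~\ref{RY}(\ref{U-Jordan}) $U$ is a Jordan domain, so a Riemann map $\psi\colon\mathbb D\to U$ extends to a homeomorphism $\overline{\mathbb D}\to\overline U$; since $f(U)=U$, the proper holomorphic self-map $f|_U$ has some degree $d':=\deg(f|_U)\geq 2$, so $B:=\psi^{-1}\circ f\circ\psi$ is a finite Blaschke product of the disk of degree $d'$ and $f|_{\partial U}$ is topologically conjugate to $B|_{S^1}$. The plan is then to invoke the standard fact that the boundary restriction of a degree-$d'$ Blaschke product of the disk is topologically conjugate to $z\mapsto z^{d'}$: $B$ has no critical point on $S^1$, and $B|_{S^1}$ is either an expanding circle map (when $U$ is an attracting basin, so $B$ has an attracting fixed point in $\mathbb D$) or a parabolic degree-$d'$ covering all of whose periodic orbits off the parabolic cycle are repelling (when $U$ is a parabolic basin); the conjugacy with $z\mapsto z^{d'}$ then follows, e.g.\ by Shub's theorem, respectively by quasiconformal surgery.

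For \emph{Property~2}: let $x\in\partial U$ be preperiodic. Replacing $f$ by an iterate and $U,x$ accordingly, I may assume $f(U)=U$ and $x$ is a fixed point. By Theorem~\ref{RY}(\ref{RY-separate}) at least one external ray lands at $x$, so $x$ is a fixed point that is a landing point of an external ray. The plan is to conclude that $x$ is repelling or parabolic by the classical Snail Lemma argument --- reduce to an \emph{invariant} external access (some iterate fixes a ray landing at $x$), apply the Snail Lemma, and note that $x\in J(f)$ rules out the superattracting possibility; see also \cite[Lemma~3.9]{Kiwi01}. Hence every preperiodic point of $\partial U$ is pre-repelling or pre-parabolic.

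For \emph{Property~3}, write $L^n_y:=L_{f^n(y)}(K(f),f^n(U))$. First suppose some $L^n_x$ contains a critical point $c$, with $n\geq 0$ minimal. If $c\neq f^n(x)$ then $L^n_x\neq\{f^n(x)\}$ at once; if $c=f^n(x)$, then $c\in\partial f^n(U)$ is critical, $f$ is at least two-to-one near $c$, and a bounded Fatou component $W\neq f^n(U)$ mapping onto $f^{n+1}(U)$ is attached to $\partial f^n(U)$ at $c$, so $W\subset L^n_x$ and again $L^n_x\neq\{f^n(x)\}$. The plan is then to pull back: $f$ maps a neighbourhood of $f^{n-1}(x)$ onto a neighbourhood of $f^n(x)$ carrying the $f^{n-1}(U)$-side of $\partial f^{n-1}(U)$ to the $f^n(U)$-side of $\partial f^n(U)$, so the preimage of the nondegenerate continuum $L^n_x$ attached at $f^n(x)$ is a nondegenerate continuum attached at $f^{n-1}(x)$ on the non-$f^{n-1}(U)$ side, i.e.\ $L^{n-1}_x\neq\{f^{n-1}(x)\}$; iterating down to $n=0$ gives $L_x\neq\{x\}$.

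Conversely, suppose no $L^n_x$ contains a critical point; I must show $L_x=\{x\}$. The first step is the mapping structure of limbs: $f$ carries $L^n_x$ onto $L^{n+1}_x$, and, in the absence of critical points, homeomorphically --- this is the technical core, where one excludes a ``hidden'' preimage Fatou component nested inside some $L^n_x$ (its existence would force a critical point along the orbit). Granting this, in the preperiodic case I would reduce (running the pull-back backwards) to $x$ periodic of some period $p$; by the preperiodic case already treated, $x$ is repelling or parabolic, and $f^p|_{L_x}$ is a homeomorphism of the continuum $L_x$ fixing $x$. By Theorem~\ref{RY}(\ref{RY-separate}), $L_x\setminus\{x\}$ equals the intersection of $K(f)$ with the sector $S$ cut off by two external rays landing at $x$; the branch $\phi$ of $f^{-p}$ fixing $x$ is then a holomorphic self-map of the simply connected domain $S$ with no interior fixed point (any such $z_0$ would be a periodic point of $f$ in $L_x$, which one rules out case-by-case using the no-critical-point hypothesis and the Schwarz lemma), so by the Denjoy--Wolff theorem $\phi^n\to x$ locally uniformly on $S$, and since $\phi^n|_{L_x}$ is a bijection of the compact set $L_x$ this forces $L_x=\{x\}$. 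The \textbf{main obstacle} is the wandering case: no fixed sector is available, and one must instead run a Yoccoz-puzzle / shrinking-neighbourhood argument to exclude a persistent wandering nondegenerate limb --- precisely the kind of argument redeveloped, and extended to Fatou trees, in \S\ref{sec-tree-level-1}--\S\ref{sec-lc}.
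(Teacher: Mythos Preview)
The paper's own proof is a single sentence --- ``These dynamical properties can be deduced from the topological properties (Theorem~\ref{RY}) directly'' --- so you are supplying far more than the authors do. Your sketches for Properties~1 and~2 are correct and standard, and your ($\Leftarrow$) direction of Property~3 is fine.

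The real divergence is in the ($\Rightarrow$) direction of Property~3. You split into preperiodic (Denjoy--Wolff on the sector) and wandering (declared a ``main obstacle'' requiring puzzles), but both detours are unnecessary: the intended argument, and the one the paper itself deploys later in the proofs of Corollary~\ref{ray-num} and Lemma~\ref{ren-maximal}, is the elementary angular-growth argument, which handles the two cases uniformly. Concretely: if $L_x\neq\{x\}$, Theorem~\ref{RY}(\ref{RY-separate}) gives two rays $R_f(\theta),R_f(\theta')$ landing at $x$ and cutting off a sector $S_f(\theta,\theta')$ with $K(f)\cap\overline{S_f(\theta,\theta')}=L_x$ and $U\cap S_f(\theta,\theta')=\emptyset$. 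Since $J(f)$ is connected, $\crit(f)\subset K(f)$, so ``no critical point in $L_{f^n(x)}$'' is the same as ``no critical point in the sector $S_f(d^n\theta,d^n\theta')$''. In particular each $f^n(x)$ is non-critical, hence $d^n\theta\neq d^n\theta'$, and $f$ carries $\overline{S_f(d^n\theta,d^n\theta')}$ conformally onto $\overline{S_f(d^{n+1}\theta,d^{n+1}\theta')}$; the angular width therefore multiplies by $d$ at every step and eventually exceeds $1$ --- contradiction. No Denjoy--Wolff, no puzzles.

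Your ``technical core'' (that $f$ carries $L_x^n$ homeomorphically onto $L_x^{n+1}$ when no critical point intervenes) is exactly the content of this sector argument, so you were one observation away from finishing: once the sector map is a conformal isomorphism, the width growth is immediate. The appeal to the machinery of \S\ref{sect-wandering} is not wrong, but it inverts the logical order of the paper --- Theorem~\ref{dynam-prop} is an \emph{input} to that machinery, not a consequence of it.
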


\begin{proof}
These dynamical properties can be deduced from the topological properties (Theorem \ref{RY}) directly. 
\end{proof}

Let $f\in\mathcal{P}_d$ be a polynomial with connected Julia set. 
Then there is a unique \emph{B\"ottcher map} $B_f:\mathbb{C}\setminus K(f)\rightarrow\mathbb{C}\setminus \overline{\mathbb{D}}$ tangent to the identity at $\infty$ and satisfying $B_f(f(z))=B_f(z)^d$. For each angle $\theta\in\mathbb{R/Z}$, the \emph{external ray} $R_f(\theta)$ is defined by $R_f(\theta) = B_f^{-1}((1,\infty)e^{2\pi i\theta})$. Each locus $B_f^{-1}(r S^1)$ with $r>1$ is called an \emph{equipotential curve}. 

For $\theta_1\neq \theta_2$ in $\mathbb{R/Z}$, if the external rays $R_f(\theta_1)$ and $R_f(\theta_2)$ land at a common point $x$, the \emph{sector} $S_f(\theta_1, \theta_2)$ is defined to be the connected component of $\mathbb C\setminus (R_f(\theta_1)\cup R_f(\theta_2)\cup \{x\})$, containing the external rays $R_f(\theta)$ with angles $\theta\in(\theta_1,\theta_2)$ (i.e. $\theta_1,\theta,\theta_2$ are in positive cyclic order). We call $x$ the \emph{root} of $S:=S_f(\theta_1, \theta_2)$, and denote it by $\operatorname{root}(S)$. 

\begin{corollary}
\label{coro-poly-like}
Let $f\in \mathcal P_{d}$ with $|f'(0)|<1$. If $\partial U_f(0)$ contains no critical point and no parabolic point, then there is a polynomial-like restriction $f:U\to V$ with filled Julia set $K(f|_U) = \overline{U_f(0)}$. 
\end{corollary}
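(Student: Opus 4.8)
The plan is to exhibit the polynomial-like pair directly, exploiting the fact that $\partial U_f(0)$ is a repelling Jordan curve. Write $P:=U_f(0)$; since $J(f)$ is connected and $P$ is a fixed attracting Fatou component, Theorem \ref{RY} gives that $P$ is a Jordan domain, so $\overline P$ is a full continuum, and Theorem \ref{dynam-prop}(1) (with $f(P)=P$) gives that $f|_{\partial P}$ is topologically conjugate to $z\mapsto z^{\delta}$ on $S^1$, where $\delta:=\deg(f|_P)\ge 2$ (the immediate basin of $0$ carries a critical point). Writing $\phi=\phi_{\overline P}$ for the conformal map of the exterior, I would take as candidate codomain the equipotential neighborhood $V:=V_t:=\mathbb C\setminus\phi^{-1}(\{|z|\ge 1+t\})$ for $t>0$ small; this is a Jordan domain containing $\overline P$ with $V_t\to\overline P$ in the Hausdorff sense as $t\to 0^{+}$. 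Let $U$ be the component of $f^{-1}(V)$ containing $\overline P$. Since a polynomial is proper and $U$ is a component of a preimage, $f\colon U\to V$ is automatically proper; so it remains to show, for $t$ small: (i) $\overline U\subset V$; (ii) $U$ is a disk and $\deg(f|_U)=\delta$; (iii) $\bigcap_{n\ge 0}f^{-n}(V)=\overline P$. Items (i)--(iii) say exactly that $f\colon U\to V$ is a polynomial-like restriction of degree $\delta\ge 2$ with filled Julia set $\overline{U_f(0)}$.

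The mechanism behind all three is that $\partial P$ is a \emph{repelling Jordan curve}: it carries no critical point, so $f$ is a local biholomorphism on a neighborhood of $\partial P$; and it carries no parabolic point while $f|_{\partial P}$ is expanding, so every periodic point of $\partial P$ is repelling and $f$ is uniformly expanding on a neighborhood $N$ of $\partial P$ in an adapted conformal metric, pushing points off $\partial P$ --- toward $0$ on the $P$-side and outward on the other. Two auxiliary facts feed in. First, each of the finitely many Fatou components $Q\ne P$ with $f(Q)=P$ satisfies $\overline Q\cap\partial P=\emptyset$: a common boundary point $x_0$ would be non-critical, hence $f$ a local homeomorphism there, forcing $\partial P$ and $\partial Q$ to coincide along a subarc through $x_0$; but then $f$ would carry both sides of that arc into $P$, impossible for a local homeomorphism. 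Second, only finitely many critical points of $f$ have critical value outside $\overline P$, and each such value lies at positive distance from $\overline P$.

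Now choose $t$ so small that $V_t\subset N$, $V_t$ is disjoint from every such $\overline Q$, and $V_t$ contains no critical value outside $\overline P$. Near $\partial P$ the expansion gives $f^{-1}(V_t)\subset V_t$ on the sheet through $\overline P$; away from $\partial P$ the component $U$ cannot escape $V_t$, since the limbs separating $\overline P$ from the $Q$'s are only partly covered by $V_t$; this gives (i), and $U=\overline P\cup(\text{a thin collar of }\partial P)$. For (ii): any critical point of $f|_U$ lies in $U\subset V_t$, so its critical value lies in $f(U)=V_t$ and hence (by the choice of $t$) in $\overline P$; thus the critical point lies in $f^{-1}(\overline P)$ but, being in $V_t$, avoids every $\overline Q$, so it lies in $\overline P$, hence in $P$ (no critical point on $\partial P$). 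Therefore $f|_U$ has exactly the $\delta-1$ critical points of $f|_P$, so Riemann--Hurwitz forces $\chi(U)=\delta\cdot 1-(\delta-1)=1$, i.e. $U$ is a disk and $\deg(f|_U)=\delta$. For (iii): $\overline P=f(\overline P)\subset V$ gives $\overline P\subseteq\bigcap_nf^{-n}(V)$; conversely, a point $z\in V\setminus\overline P$ with full forward orbit in $V$ has bounded orbit, so $z\in K(f)$, hence $z$ lies in some limb $L_x\setminus\{x\}$ and is close to $\overline P$, hence close to $x\in\partial P$ and in $N$; the adapted-metric expansion pushes $\operatorname{dist}(f^n(z),\partial P)$ geometrically away until $f^n(z)$ leaves $N\supset V$, a contradiction. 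Hence $\bigcap_nf^{-n}(V)=\overline P$.

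The main obstacle, I expect, is making (iii) rigorous --- in particular justifying that $f$ is uniformly expanding in a conformal metric near the repelling Jordan curve $\partial P$ (this is where both ``no critical point on $\partial P$'' and ``no parabolic point on $\partial P$'' are used, the latter precisely to stop nearby points from being trapped along $\partial P$), and that the thin collar $U\setminus\overline P$ genuinely consists of small root-pieces of limbs that escape under iteration. The remaining ingredients --- that $V_t$ is a Jordan domain shrinking to $\overline P$, the separation of the components $Q$ and of the stray critical values, and the Riemann--Hurwitz count --- are routine once the expanding-repeller picture is in place.
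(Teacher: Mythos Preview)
Your route is quite different from the paper's. The paper never appeals to expansion on $\partial U_f(0)$; instead it works with the external rays of $f$. It singles out the finitely many sectors (rooted at points of $\partial U_f(0)$) that contain critical points outside $\overline{U_f(0)}$, takes their forward orbit to get a finite forward-invariant family of sectors with pre-repelling roots, removes the corresponding closed sectors from an equipotential neighbourhood of $K(f)$, and then applies Milnor's thickening technique to obtain the polynomial-like pair. That every point outside $\overline{U_f(0)}$ escapes is afterwards a combinatorial fact about limbs (Theorem~\ref{dynam-prop}(\ref{dynam-prop-crit})), not an analytic one.

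The gap in your argument is the assertion that $f$ is uniformly expanding near $\partial U_f(0)$ in an adapted conformal metric. The ingredients you cite --- $f|_{\partial U_f(0)}$ topologically conjugate to $z^{\delta}$, no critical points on $\partial U_f(0)$, and every periodic point there repelling --- do not suffice. The conjugacy to $z^{\delta}$ is only a homeomorphism of Jordan curves and need not be bi-H\"older, so metric expansion does not transfer from the Blaschke model; and Ma\~n\'e's hyperbolicity criterion is unavailable, since a recurrent critical point $c$ sitting in a limb $L_y$ with $y\in\partial U_f(0)$ periodic can perfectly well have $y\in\omega(c)$. It \emph{is} true that $\partial U_f(0)$ is hyperbolic under the stated hypotheses, but the transparent way to see this is to first build the polynomial-like restriction (all of its critical points then lie in the attracting basin, so it is a hyperbolic polynomial-like map and its Julia set $\partial U_f(0)$ is expanding) --- which is precisely the corollary you are trying to prove. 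Both your step~(i) and your step~(iii) rest on this unestablished expansion, so the argument as written is circular. A smaller point: you silently assume $J(f)$ is connected when invoking Theorem~\ref{RY}; the paper reduces to that case at the outset.
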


\begin{proof}
Without loss of generality, we assume $K(f)$ is connected and $\overline{U_f(0)}\neq K(f)$. 
To make $\overline{U_f(0)}$ into a small filled Julia set, we need to remove the nontrivial limbs of $K(f)$ for $U_f(0)$. 
Let $A$ consist of the roots of all nontrivial limbs. 
By Theorem \ref{RY}(\ref{RY-separate}), for each $y\in A$, there is a unique sector $S_y$ with root $y$ such that $K(f)\cap \overline{S_y} = L_y(K(f),U_f(0))$. 
By Theorem \ref{dynam-prop} and the assumption on $\partial U_f(0)$, every $y\in A$ is pre-repelling. 
Let 
\begin{align*}
C &= \{y\in A\mid S_y\cap\crit(f)\neq\emptyset\},\\
P &= \{f^n(y)\mid y\in C, n\geq1\},\\ 
Q &= f^{-1}(P)\cap \partial U_f(0).
\end{align*}
Then $Q$ is finite and $P\subset Q$.
Given $r>1$. Let $$W_Q = (\mathbb{C}\setminus B_f^{-1}(\mathbb{C}\setminus r\mathbb{D}))\setminus \bigcup_{y\in Q}\overline{S_y}.$$
Replacing $r$ and $Q$ by $r^d$ and $P$ respectively gives $W_P$. 
Note that every point in $Q$ is pre-repelling. By the thickening technique \cite{Mil-lc}, the proper map $f: W_Q\to W_P$ extends to a polynomial-like map $f:U\to V$ such that $\overline{U_f(0)}\subset U$ and $U\cap\crit(f) = \overline{U_f(0)}\cap \crit(f)$. 

We claim that $K(f|_U)=\overline{U_f(0)}$. Clearly, $\overline{U_f(0)}\subset K(f|_U)$. 
By the construction of $f:U \to V$, every point in $S_y$ escapes under $f|_U$ if $y\in Q$. 
Applying Theorem \ref{dynam-prop}(\ref{dynam-prop-crit}) to $y\in A\setminus Q$, there is a minimal $n\geq1$ such that $f^n(y)\in Q$, so the points in $S_y$ also escape under $f|_U$. 
Therefore every point outside $\overline{U_f(0)}$ escapes under $f|_U$. This shows the claim and completes the proof. 
\end{proof}

\begin{corollary}
\label{crit-parab}
For every $f\in \partial_{\rm reg}\mathcal H_d$, the Julia set $J(f)$ is connected, and $\partial U_f(0)$ contains a critical point or a parabolic point. 
\end{corollary}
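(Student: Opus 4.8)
The plan is to argue by contradiction, using Corollary \ref{coro-poly-like} to extract a polynomial-like renormalization whose filled Julia set is $\overline{U_f(0)}$, and then to move the parameter $f$ inside the hyperbolic component it supposedly bounds. First I would observe that connectivity of $J(f)$ is automatic for $f\in\partial_{\rm reg}\mathcal H_d$: every $g\in\mathcal H_d$ has all its critical points in $U_g(0)$, hence connected Julia set, and connectivity of $J$ is a closed condition on $\mathcal P_d$ (the complement, the shift locus together with partial escape loci, is open), so the limit $f$ also has $J(f)$ connected. This disposes of the first assertion. Also, since $|f'(0)|<1$ the fixed point $0$ is genuinely attracting, so $U_f(0)$ is a well-defined bounded Fatou component and Theorem \ref{RY} applies to it.

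Now suppose for contradiction that $\partial U_f(0)$ contains neither a critical point nor a parabolic point. Then Corollary \ref{coro-poly-like} gives a polynomial-like restriction $f:U\to V$ with $K(f|_U)=\overline{U_f(0)}$, and moreover $U\cap\crit(f)=\overline{U_f(0)}\cap\crit(f)$; combined with the hypothesis that $\partial U_f(0)$ has no critical point, this means the only critical points of $f$ inside $U$ are those already in the open Fatou component $U_f(0)$. The key dynamical consequence is that $U_f(0)$ is contained in the basin of the attracting fixed point $0$ for the polynomial-like map $f:U\to V$, and all critical points in $K(f|_U)$ lie in this basin — so $f:U\to V$ is a hyperbolic polynomial-like map. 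By the straightening theorem it is hybrid equivalent to a hyperbolic polynomial $P$ of degree $\deg(f|_U)$, in fact one lying in the central hyperbolic component of its own degree. The remaining critical points of $f$ — those outside $U$ — are unaffected: they lie in the nontrivial limbs, and I would want to show that a perturbation argument, or rather a holomorphic motion / quasiconformal surgery argument, allows one to deform $f$ while keeping these exterior critical points and their combinatorics fixed.

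The cleanest route, and the one I would pursue, is the following stability argument. Because $\partial U_f(0)$ contains no parabolic point, by Theorem \ref{dynam-prop}(\ref{dynam-prop-prep}) and the density of periodic points on $\partial U_f(0)$, every periodic point on $\partial U_f(0)$ is repelling; the cycles involved in the finite set $Q$ from the proof of Corollary \ref{coro-poly-like} are all repelling, so the polynomial-like structure $f:U\to V$ persists under small perturbation of $f$ in $\mathcal P_d$. One can then perform a quasiconformal surgery on $f$ supported on a neighborhood of $\overline{U_f(0)}$: replace the hyperbolic polynomial-like germ by a nearby one whose multiplier at $0$ has modulus strictly less than $|f'(0)|$ (indeed one can push the renormalized map toward $z\mapsto z^{\deg(f|_U)}$), glue back outside $U$ using the persistence of the exterior dynamics, and straighten the resulting Beltrami form by the measurable Riemann mapping theorem to obtain a new polynomial $\tilde f\in\mathcal P_d$. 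For $\tilde f$, all critical points of the renormalized piece are attracted to $0$, and the exterior critical points occupy the same limbs as for $f$; tracking where $f$ itself sat relative to $\mathcal H_d$, a short limit/openness argument shows $\tilde f\in\mathcal H_d$ while $\tilde f$ can be taken arbitrarily close to $f$, contradicting $f\in\partial\mathcal H_d$ being on the \emph{boundary}. Wait — this needs care: being on the boundary does not forbid nearby maps in $\mathcal H_d$; rather the contradiction must come from $f$ itself lying in the open set $\mathcal H_d$. So I would instead argue directly: the surgery shows that $f$ is \emph{quasiconformally conjugate}, near $\overline{U_f(0)}$ and keeping exterior dynamics fixed, to a polynomial with \emph{all} critical points in the immediate basin of $0$; but a polynomial all of whose critical points are attracted to the fixed attracting point $0$ lies in $\mathcal H_d$, and the quasiconformal deformation stays within a single hyperbolic component, forcing $f\in\mathcal H_d$, contradicting $f\in\partial\mathcal H_d$.

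The main obstacle is precisely this surgery/rigidity step: one must verify that the exterior critical points genuinely cause no trouble, i.e. that the polynomial-like renormalization $f:U\to V$ together with the exterior data determines enough of $f$ that hyperbolicity of the renormalization propagates to hyperbolicity (membership in $\mathcal H_d$) of $f$. Concretely, I expect the heart of the argument to be showing that the exterior critical points, living in the nontrivial limbs $L_y$ with $y$ pre-repelling, are themselves attracted to $0$ under $f$ — equivalently that every point of every limb $L_y$ with $y\in\partial U_f(0)$ eventually maps into $\overline{U_f(0)}$. This would follow if one can prove that the nontrivial limbs are all eventually mapped into the "renormalization locus," which should come from Theorem \ref{dynam-prop}(\ref{dynam-prop-crit}): $L_y\neq\{y\}$ forces some forward iterate of $L_y$'s root to have a critical point in its limb, and since all critical points of $f$ in $K(f|_U)$ are in $U_f(0)$, any exterior critical point must lie outside $K(f|_U)$, hence must eventually escape the polynomial-like map $f:U\to V$ — but then it escapes to where? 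Reconciling "escapes $f:U\to V$" with "$J(f)$ connected so nothing escapes to $\infty$" is the delicate point, and resolving it is where I expect the real work to be; most likely it shows the assumption is untenable because such a critical point would have to be non-escaping yet outside the filled Julia set of the renormalization, contradicting Theorem \ref{Ymax=Kf}-type control, thereby proving $\partial U_f(0)$ must contain a critical or parabolic point after all.
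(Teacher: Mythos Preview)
Your setup is right up through the point where you invoke Corollary \ref{coro-poly-like} and note that the polynomial-like restriction $f:U\to V$ persists under perturbation. After that the argument goes off course. The surgery route and the attempt to show that the exterior critical points are themselves attracted to $0$ (so that $f\in\mathcal H_d$) cannot succeed: there is no reason whatsoever for a critical point sitting in a nontrivial limb $L_y$ to lie in the basin of $0$; it may well lie in a completely different attracting basin or on $J(f)$. So the program in your last two paragraphs is chasing a statement that is generally false.

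The missing observation is a degree count, and the contradiction runs in the opposite direction from what you attempt. Since $f\in\partial\mathcal H_d$ is not in the open set $\mathcal H_d$, at least one critical point lies outside $U_f(0)$; by hypothesis none lies on $\partial U_f(0)$, and by the construction in Corollary \ref{coro-poly-like} we have $U\cap\crit(f)=\overline{U_f(0)}\cap\crit(f)$, so that critical point lies outside $U$ and hence $\deg(f|_U)<d$. Now perturb \emph{into} $\mathcal H_d$: for $g\in\mathcal H_d$ close to $f$ the polynomial-like restriction persists as $g:W_g\to V$ with the same degree $<d$. Let $U'_g$ be the Fatou component of $g|_{W_g}$ containing $0$; its boundary lies in $J(g|_{W_g})\subset J(g)$, so $U'_g$ is a Fatou component of $g$ itself, whence $U'_g=U_g(0)$. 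But then
\[
d=\deg\bigl(g|_{U_g(0)}\bigr)=\deg\bigl(g|_{U'_g}\bigr)\le\deg\bigl(g|_{W_g}\bigr)<d,
\]
the desired contradiction. This is exactly the mechanism the paper uses (stated tersely here, and spelled out in the proof of Theorem \ref{Ymax=Kf}); no surgery or control of exterior critical orbits is needed.
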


\begin{proof}
It is clear that $J(f)$ is connected. 
Assume $\partial U_f(0)$ contains no critical point and no parabolic point; we will find a contradiction. By Corollary \ref{coro-poly-like}, there is a polynomial-like map $f:U\to V$ with Julia set $J(f|_U)=\partial U_f(0)$. 
It follows that $\partial U_f(0)$ is a hyperbolic subset of $J(f)$ with $\deg(f|_{\partial U_f(0)})<d$. 
By perturbing $f$ into $\mathcal H_d$, we can get contradictions. 
\end{proof}

Let $$\mathcal{Y}_d^0 = \{f\in\mathcal P_d\mid \text{$|f'(0)|<1$ and $J(f)$ is connected}\}.$$
Then $0$ is attracting and $U_f(0)$ is well-defined for each $f\in\mathcal{Y}_d^0$. 
Let 
$$\mathcal{Y}_d^1=\{f\in\mathcal{Y}_d^0\mid
\text{$\partial U_f(0)$ contains critical points or parabolic points}\}.$$ 
By Corollary \ref{crit-parab}, we have $\partial_{\rm reg}\mathcal H_d\subset \mathcal{Y}_d^1$. 
The conditions in the definition of $\mathcal{Y}_d^1$ allow Fatou components to grow from $U_f(0)$ outward successively.

%
%

\section{Fatou tree of level 1}
\label{sec-tree-level-1}

In this section, we will generate the tree of Fatou components of level 1 for $f\in\mathcal{Y}_d^1$. 
For convenience, we list the notations about the Fatou tree of level $k$ here. The Fatou tree of general level will be defined in \S\ref{subsec-tree-k}. 

\begin{notation}
\label{notation-tree-k}
~

$\mathcal Y_d^k$: the locus consisting of polynomials with Fatou trees of level $k$

$U_1^k(f), U_2^k(f), \dots, U_{n^k(f)}^k(f)$: the union of some parabolic Fatou components

$X_0^k(f)$, $X_1^k(f)$, $X_2^k(f)$, $\dots$: the growing of $k$-tree components

$Y^k(f)$: the Fatou tree of level $k$

$Y_\infty^k(f)$: the set of limit points on $Y^k(f)$

$T^k(f)$: the family of $k$-tree components in $Y^k(f)$

$B^k(f)$: the set of all branches of the tree $(T^k(f),<)$

$\sigma_f^k:B^k(f)\to B^k(f)$: the symbolic map induced by $f$
\end{notation}


%
%
%
%
%
%
%
%
%
%
%
%

%
%
%
%
%
%
%

\subsection{The construction}
\label{level-1-construction}

Given $f\in\mathcal{Y}_d^1$. 
First, we will define $X_0^1(f)$ to be the closure of the union of $U_f(0)$ and attracting petals growing from $U_f(0)$ successively. 
Let $U_0^1(f) = U_f(0)$. 
If $\partial U_0^1(f)$ contains no parabolic periodic point, we set $n^1(f)=0$. 
If $\partial U_0^1(f)$ contains parabolic periodic points, let $U_1^1(f)$ be the union of all parabolic periodic Fatou components $U$ such that $\{f^n|_U\}_{n\in\mathbb{N}}$ tends to a parabolic cycle on $\partial U_0^1(f)$. The boundary of $U_1^1(f)$ possibly contains new parabolic periodic points (outside $\partial U_0^1(f)\cap \partial U_1^1(f)$). Let $U_2^1(f)$ be the union of all parabolic periodic Fatou components $U$ such that $\{f^n|_U\}_{n\in\mathbb{N}}$ tends to a parabolic cycle on $\partial U_1^1(f)\setminus \partial U_0^1(f)$. Inductively, we will get 
$U_0^1(f)$, $U_1^1(f)$, \dots, $U_{n}^1(f)$ such that $\partial U_n^1(f)\setminus \partial U_{n-1}^1(f)$ contains no parabolic periodic point, where $n=n^1(f)\geq1$.  
Now we define $$X_0^1(f)=\bigcup_{j=0}^{n^1(f)}\overline{U_j^1(f)}.$$

\begin{lemma} 
\label{no-rotation}
Let $f\in\mathcal{Y}_d^1$. 
Suppose $z\in X_0^1(f)$ is a parabolic point with period $p$. Then $(f^p)'(z)=1$. 
\end{lemma}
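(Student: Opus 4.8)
## Proof proposal for Lemma 1.1 (no-rotation)

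The plan is to argue by contradiction: suppose $z \in X_0^1(f)$ is a parabolic periodic point of period $p$ with multiplier $(f^p)'(z) = \lambda$ a root of unity $\ne 1$, say of exact order $q \ge 2$. The key structural input is how $X_0^1(f)$ is built: it is the closure of $U_f(0)$ together with chains of parabolic Fatou components $U_j^1(f)$ attached successively, where each $U_j^1(f)$ consists of parabolic periodic components whose orbits converge to a parabolic cycle lying on $\partial U_{j-1}^1(f)$. I would first locate $z$ in this hierarchy: either $z \in \partial U_f(0)$, or $z$ lies on $\partial U_m^1(f) \setminus \partial U_{m-1}^1(f)$ for some $m \ge 1$, but since such $z$ by construction carries no parabolic point when $m = n^1(f)$, the point $z$ must be a parabolic point sitting on the boundary of some $U_m^1(f)$ with $m < n^1(f)$ (or on $\partial U_f(0)$), and there is at least one attracting petal at $z$ belonging to $U_{m+1}^1(f)$.

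The core of the argument is a local count of attracting petals and repelling directions at $z$ for $f^p$. If $(f^p)'(z) = \lambda$ is a primitive $q$-th root of unity with $q \ge 2$, then the parabolic point $z$ for $f^p$ has some number $\nu q$ of attracting petals and $\nu q$ repelling petals (for some integer $\nu \ge 1$, by the Leau–Fatou flower theorem in the rationally-indifferent-with-nontrivial-rotation case), and crucially the local dynamics of $f^p$ near $z$ cyclically permutes the $q$ groups of petals with rotation number $\ell/q$ where $\gcd(\ell, q) = 1$. Now I would use that $z$ lies on the boundary of the Jordan domain $U_m^1(f)$ (each parabolic periodic component is a Jordan domain by Theorem 2.2(1), applied to the appropriate iterate), together with Theorem 2.3(1): $f^p$ restricted to $\partial(\text{the component of }U_m^1(f) \text{ at } z)$ is topologically conjugate to $w \mapsto w^{\deg}$ on $S^1$, hence fixes that boundary circle and in particular fixes $z$ while preserving a definite "side" — the attracting petal inside that component is mapped to itself by $f^p$, not rotated to a different petal.

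The contradiction then comes from combining the following: the attracting petal at $z$ lying in (a component of) $U_m^1(f)$ is invariant under $f^p$ — because $z$ is a \emph{fixed} point of $f^p$ on the boundary circle $\partial U$ and $f^p$ acts as a degree-$\delta$ covering there, so $f^p$ cannot send this petal's access to $z$ to the access of a different petal in a different group. Likewise the attracting petal belonging to $U_{m+1}^1(f)$ at $z$ (which exists precisely because $z$ is parabolic and carries a new petal) must be $f^{p}$-invariant for the same reason, once we note $z$ is also periodic for the dynamics on $\partial U_{m+1}^1(f)$. But if $\lambda$ is a primitive $q$-th root of unity with $q \ge 2$, $f^p$ cyclically permutes the $q$ distinct groups of attracting petals nontrivially, so \emph{no} attracting petal at $z$ can be invariant under $f^p$ — it is only invariant under $f^{pq}$. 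This forces $q = 1$, i.e. $(f^p)'(z) = 1$. I would phrase the "invariant petal" step carefully using the conjugacy of $f^p|_{\partial U}$ to $w \mapsto w^\delta$ and the standard correspondence between fixed boundary points and the petal/access structure.

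The main obstacle I anticipate is making rigorous the claim that the attracting petal contained in the Jordan domain $\overline{U}$ (with $z \in \partial U$ fixed by $f^p$) is genuinely $f^p$-invariant as a petal — one must rule out that $f^p$ maps this petal to another attracting petal at $z$ lying \emph{outside} $\overline U$. This is where the boundary conjugacy to $z^\delta$ is essential: since $f^p$ maps $\overline U$ into itself (it is periodic and $z$ is fixed, so the grand orbit structure forces $f^p(U) = U$ after possibly noting $U$ is fixed — here one uses that $U$ is a component of $U_m^1(f)$ whose orbit converges to the cycle containing $z$, and $z$ being fixed by $f^p$ pins down $U$ as $f^p$-invariant), the petal inside $U$ with access to $z$ maps inside $U$ with access to $z$, hence to itself. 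I would also need to handle cleanly the base case $z \in \partial U_f(0)$: but there $U_f(0)$ is $f^p$-invariant (as $0$ is a fixed point), and the same petal-invariance argument applies verbatim, yielding the same contradiction unless $q = 1$.
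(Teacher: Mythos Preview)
Your approach is the paper's: an $f^p$-invariant Fatou component with $z$ on its boundary forces the combinatorial rotation at $z$ to be trivial, hence $(f^p)'(z)=1$. The paper compresses this to two lines and inducts on the level $j$ of $\partial U_j^1(f)$, starting from the genuinely $f$-invariant domain $U_f(0)$.

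One correction worth making: there is no attracting petal of $z$ inside a component $V$ of $U_m^1(f)$ --- those petals lie in $U_{m+1}^1(f)$. What $V$ contributes at $z$ is an \emph{access} (a repelling direction), and it is this access that is $f^p$-invariant once $f^p(V)=V$. Your justification of $f^p(V)=V$ via ``$z$ being fixed by $f^p$ pins down $U$'' is circular when applied to the petals in $U_{m+1}^1(f)$, since their $f^p$-invariance is precisely what is in dispute. The honest route is the induction on $m$: in the base case $V=U_f(0)$ is $f$-invariant outright; for $m\ge 1$, the inductive hypothesis at the attaching parabolic point $w\in\partial U_{m-1}^1(f)$ forces $V$ to have the same period $p_w$ as $w$, and then $z\in\partial V\cap\partial f^p(V)$ together with $z\notin\partial U_{m-1}^1(f)$ and the tree structure of $X_0^1(f)$ yields $p_w\mid p$, i.e.\ $f^p(V)=V$.
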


\begin{proof}
If $z\in\partial U_f(0)$, then it follows from $f(U_f(0)) = U_f(0)$ that the combinatorial rotation number of $z$ under $f^p$ is $1$. Therefore $(f^p)'(z)=1$. By induction, we have $(f^p)'(z)=1$ for $z$ in $\partial U_j^1(f)$ successively. 
\end{proof}

Note that the compact set $X_0^1(f)$ satisfies $f(X_0^1(f))=X_0^1(f)$. For any $n\in\mathbb{N}$, define inductively $X_{n+1}^1(f)$ to be the connected component of $f^{-1}(X_n^1(f))$ containing $X_n^1(f)$. See Figure \ref{fig-X1}. Then we have an increasing sequence of connected and compact sets $$X_0^1(f)\subset X_1^1(f)\subset X_2^1(f)\subset\cdots.$$ Each $X_n^1(f)$ is a finite union of closed disks, of which any two are either disjoint or touching at exactly one point on the boundaries. 

\begin{figure}[ht]
\centering
\includegraphics{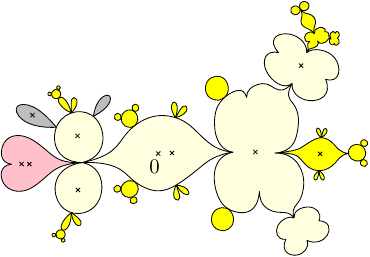}
\caption{The construction of the Fatou tree: from $X_0^1(f)$ (light yellow) to $X_1^1(f)$ (yellow and light yellow). The pink part, which is also a parabolic fixed Fatou component, is not contained in $X_0^1(f)$. The grey parts, which are attracting periodic Fatou components, are not contained in $Y^1(f)$. Each cross is a single critical point.}
\label{fig-X1}
\end{figure}

Let $$Y^1(f)=\overline{\bigcup_{n\in\mathbb{N}} X_n^1(f)}$$ be the \emph{Fatou tree of level $1$}. 
Let $$Y_\infty^1(f)= Y^1(f)\setminus\bigcup_{n\in\mathbb{N}} X_n^1(f)$$ be the set of all limit points on $Y^1(f)$. 

For some small $n$, it may happen that $X_{n+1}^1(f)\subsetneqq f^{-1}(X_n^1(f))\cap Y^1(f)$. 
For example, see the bottom of Figure \ref{fig-critical}.

\subsection{The limb decomposition around the Fatou tree}
\label{subsection-Y-limb}
In set theory, a tree is a partially ordered set (poset) $(T, <)$ such that for each $t \in T$, the set $\{s \in T \mid s < t\}$ is well-ordered by the relation $<$. 

Let $f\in\mathcal{Y}_d^1$. 
The Fatou tree $Y^1(f)$ induces a tree $(T^1(f),<)$ as follows. 
Let $T^1(f)$ be the family of Fatou components in $Y^1(f)$ (or $\bigcup_{n\in\mathbb{N}} X_n^1(f)$). 
For $U\in T^1(f)\setminus\{U_f(0)\}$, there is a unique sector $S_f(U)$ such that $U\subset S_f(U)$, $\operatorname{root}(S_f(U))\in\partial U$, $U_f(0)\subset \mathbb{C}\setminus S_f(U)$ and $S_f(U)$ contains no external ray landing at its root. 
For $U=U_f(0)$, we set $S_f(U)=\mathbb{C}$. 
For $U_1,U_2\in T^1(f)$, we define $U_1<U_2$ if $S_f(U_1)\supsetneqq S_f(U_2)$. 

Let $B^1(f)$ denote the set of all branches of the tree $(T^1(f),<)$. 
Then $K(f)$ has a limb decomposition for $Y^1(f)$: 
\begin{equation}
\label{Kf-Y1f}
K(f) = Y^1(f)^\circ\sqcup \Bigg(\bigsqcup_{y\in \partial Y^1(f)\setminus Y_\infty^1(f)} L_{y}(K(f),Y^1(f))\Bigg) 
\sqcup \Bigg(\bigsqcup_{\mathbf{U}\in B^1(f)} L_{\mathbf{U}}(f)\Bigg),
\end{equation}
where the limbs
\begin{align*}
&L_{y}(K(f),Y^1(f)) = \bigcap_{U\in T^1(f), y\in\partial U} L_{y}(K(f),U),\\
&L_{\mathbf{U}}(f) = K(f)\cap\bigcap_{n\in\mathbb{N}} \overline{S_f(U_n)},\ \mathbf{U}=\{U_n\}_{n\in\mathbb{N}}. 
\end{align*}

\begin{lemma}
\label{Y1-lc}
Let $f\in\mathcal{Y}_d^1$. Suppose $L_{\mathbf{U}}(f)\cap Y^1(f)$ is a singleton for any $\mathbf{U}\in B^1(f)$. Then $Y^1(f)$ is locally connected. 
\end{lemma}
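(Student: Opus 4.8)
The plan is to verify that $Y^1(f)$ has Kuratowski's \emph{property $S$} — for every $\varepsilon>0$ it is a finite union of subcontinua of diameter less than $\varepsilon$ — which for a continuum is equivalent to being locally connected. That $Y^1(f)$ is a continuum is clear: it is a closed subset of $K(f)$, hence compact, and it is the closure of the increasing union of the connected sets $X_n^1(f)$. Two inputs will be used repeatedly: every $U\in T^1(f)$ is a bounded pre-attracting or pre-parabolic Fatou component, so $\overline U$ is a closed Jordan domain by Theorem~\ref{RY}(\ref{U-Jordan}), hence a Peano continuum, hence (by property $S$ applied to $\overline U$) a finite union of subcontinua of any prescribed small diameter; and for each such $U$ the limbs $L_x(K(f),U)$ ($x\in\partial U$) have diameters tending to $0$ by Theorem~\ref{RY}(4). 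For $U\in T^1(f)$ set $Z_U=Y^1(f)\cap\overline{S_f(U)}$ (so $Z_{U_f(0)}=Y^1(f)$), the part of the Fatou tree hanging below $U$; I will use the structural facts that each $Z_U$ is a subcontinuum of $Y^1(f)$, that for a child $V$ of $W$ the root $r_V:=\operatorname{root}(S_f(V))$ lies on $\partial W$ with $Z_V\subset L_{r_V}(K(f),W)$ (since $S_f(V)$ is the sector cutting $Z_V$ off from $W$ at $r_V$, cf.\ Theorem~\ref{RY}(\ref{RY-separate})), and that at each $t\in\partial W$ only finitely many children of $W$ have root $t$ (only finitely many external rays land at $t$). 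Combining the last two with Theorem~\ref{RY}(4): for every $W\in T^1(f)$ and $\delta>0$, only finitely many children $V$ of $W$ satisfy $\operatorname{diam}(Z_V)\geq\delta$.

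The crux is to show that $S_\varepsilon:=\{U\in T^1(f):\operatorname{diam}(Z_U)\geq\varepsilon\}$ is finite for every $\varepsilon>0$; this is the only place the hypothesis of the lemma is used. Since $U'<U$ forces $S_f(U')\supsetneq S_f(U)$ and hence $Z_{U'}\supseteq Z_U$, the set $S_\varepsilon$ is downward closed, i.e.\ a subtree of $(T^1(f),<)$ rooted at $U_f(0)$; every node has finitely many predecessors (the predecessors of a $U$ first appearing in $X_n^1(f)$ already lie in $X_n^1(f)$, a finite union of disks), and by the previous paragraph every node has finitely many children inside $S_\varepsilon$. If $S_\varepsilon$ were infinite, K\"onig's lemma would furnish a branch $\mathbf U=\{U_n\}_{n\in\mathbb N}\in B^1(f)$ with $U_n\in S_\varepsilon$ for all $n$. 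Then $\{Z_{U_n}\}$ is a nested sequence of continua with $\bigcap_n Z_{U_n}=Y^1(f)\cap\bigcap_n\overline{S_f(U_n)}=L_{\mathbf U}(f)\cap Y^1(f)$, a singleton by hypothesis, so $\operatorname{diam}(Z_{U_n})\to0$ — contradicting $U_n\in S_\varepsilon$. Hence $S_\varepsilon$ is finite.

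Now fix $\varepsilon>0$; if $\operatorname{diam}(Y^1(f))<\varepsilon$ there is nothing to prove, so assume otherwise and put $S=S_{\varepsilon/3}$, a finite subtree containing $U_f(0)$, with frontier $\partial S$ (the children of elements of $S$ that lie outside $S$). Each $U\in T^1(f)\setminus S$ satisfies $\overline U\subset Z_V$ for the unique $V\in\partial S$ lying on the chain from $U$ down to $U_f(0)$, so, setting $\widetilde W=\overline W\cup\bigcup\{\,Z_V:V\in\partial S,\ \operatorname{parent}(V)=W\,\}$ for $W\in S$, we get $Y^1(f)=\bigcup_{W\in S}\widetilde W$ (the right side is closed: for each $W$ the attached $Z_V$ have $\operatorname{diam}(Z_V)\to0$ with roots in the compact set $\partial W$, so their cluster points fall into $\overline W$). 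It remains to split each of the finitely many $\widetilde W$ into finitely many subcontinua of diameter $<\varepsilon$. Cover $\overline W$ by finitely many subcontinua of diameter $<\varepsilon/3$; the children $V\in\partial S$ of $W$ with $\operatorname{diam}(L_{r_V}(K(f),W))\geq\varepsilon/3$ are finitely many by Theorem~\ref{RY}(4), and each corresponding $Z_V$ — already of diameter $<\varepsilon/3$ since $V\notin S$ — is kept as a separate piece; every remaining child $V\in\partial S$ of $W$ has $\operatorname{diam}(Z_V)<\varepsilon/3$, so appending it to a piece of the cover of $\overline W$ that contains $r_V$ keeps that piece connected and of diameter $<\varepsilon$. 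Thus $\widetilde W$, and hence $Y^1(f)$, is a finite union of subcontinua of diameter $<\varepsilon$, and property $S$ yields the local connectivity.

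The step I expect to be the main obstacle is the finiteness of $S_\varepsilon$ together with the bookkeeping that produces a genuinely finite cover: a node may have infinitely many children, so one must isolate the finitely many children carrying a ``large'' limb and absorb the rest using the diameter control of Theorem~\ref{RY}(4), and it is exactly the singleton hypothesis that rules out an infinite branch of ``large'' $Z_U$'s. The supporting topological facts — that each $Z_U$ is a subcontinuum, that $\operatorname{root}(S_f(V))\in\partial(\operatorname{parent}(V))$, and that $Z_V$ lies in one limb of its parent — are routine consequences of the construction of the tree $(T^1(f),<)$.
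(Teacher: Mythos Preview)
Your approach is genuinely different from the paper's. The paper argues \emph{pointwise}: it splits $Y^1(f)$ into the interior, the non-limit boundary $\partial Y^1(f)\setminus Y_\infty^1(f)$, and $Y_\infty^1(f)$; the first two cases are handled by Theorem~\ref{RY}, and for $y\in Y_\infty^1(f)$ the hypothesis gives $\{y\}=L_{\mathbf U}(f)\cap Y^1(f)$, so $\{S_f(U_n)\cap Y^1(f)\}$ is a decreasing basis of connected open neighborhoods of $y$. That is the whole proof. Your property~$S$ argument via K\"onig's lemma is longer but yields more: an explicit finite decomposition of $Y^1(f)$ into small subcontinua, and a quantitative reason (diameter control on $Z_U$) for why the hypothesis is exactly what is needed.

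There is, however, one gap. You assert that at each $t\in\partial W$ only finitely many children of $W$ have root $t$, ``since only finitely many external rays land at $t$''. At this point in the paper that ray count is \emph{not} available: the root of a child is either pre-parabolic (then indeed preperiodic, so finitely many rays) or pre-critical, and a pre-critical root can be wandering --- the finiteness of rays at such a point is only established later (Corollary~\ref{ray-num}), \emph{after} local connectivity. So the stated reason is circular. The claim itself is nonetheless true and can be proved directly: if $N(t)$ denotes the number of components of $T^1(f)$ whose closure contains $t$, then the local model $z\mapsto z^{\deg(f,t)}$ gives $N(t)\le\deg(f,t)\cdot N(f(t))$; along the forward orbit of a wandering $t$ the local degree equals $1$ except finitely often, and once past all critical points two distinct touching components would force $f^n(t)$ to be a touching point inside $X_0^1(f)$, i.e.\ parabolic periodic, contradicting that $t$ is wandering --- so eventually $N(f^n(t))=1$ and hence $N(t)\le\prod_n\deg(f,f^n(t))<\infty$. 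With this correction your K\"onig argument goes through; everything else (connectedness of $Z_U$, the bookkeeping for the final cover) is fine.
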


\begin{proof}
Let $y\in Y^1(f)$. We will show that $Y^1(f)$ is locally connected at $y$ by considering three cases. 
The result is clear for $y\in Y^1(f)^\circ$. 
For $y\in \partial Y^1(f)\setminus Y_\infty^1(f)$, the result follows from Theorem \ref{RY}. 
Now consider $y\in Y_\infty^1(f)$. By the limb decomposition of $K(f)$ for $Y^1(f)$, there is a unique $\mathbf{U}=\{U_n\}_{n\in\mathbb{N}}\in B^1(f)$ such that $y\in L_{\mathbf{U}}(f)\cap Y^1(f)$. 
Then $\{y\}= L_{\mathbf{U}}(f)\cap Y^1(f)$ by the hypothesis in the lemma. So $\{S_f(U_n)\cap Y^1(f)\}_{n\in\mathbb{N}}$ is a sequence of open and connected subsets of $Y^1(f)$ decreasing to $y$. 
Hence, $Y^1(f)$ is locally connected at $y\in Y_\infty^1(f)$.
\end{proof}

\begin{remark}
The limit set of a tree of Jordan disks growing as the Fatou tree can be very queer, even not locally connected. 
To show that $L_{\mathbf{U}}(f)\cap Y^1(f)$ is a singleton for any $\mathbf{U}\in B^1(f)$, we should make the most of the dynamics of $f$. 

For example, there might be $y\in Y_\infty^1(f)$ such that $y$ is not an accumulation point of any branch in $B^1(f)$, i.e. $y\notin \bigcap_{n\geq 0}\overline{\bigcup_{k\geq n} U_k}$ for any $\{U_n\}_{n\in\mathbb{N}}\in B^1(f)$. 
To see this, one can cover the locally connected part of a comb by small disks. However, we will exclude such situation for the Fatou tree. 
\end{remark}

The restriction of $f$ on $\bigcup_{n\in\mathbb{N}} X_n^1(f)$ induces a map $\sigma_f^1:B^1(f)\to B^1(f)$ as follows. Let $\mathbf{U}=\{U_n\}_{n\in\mathbb{N}}\in B^1(f)$. 
When $N$ is large enough, the restriction of $f$ on $\bigcup_{n\geq N} \overline{U_n}$ is injective. 
By adding proper elements $V_0,\dots, V_L\in T^1(f)$, we have a branch $\mathbf{V}:=\{V_0,\dots,V_L,f(U_{N}),f(U_{N+1}),\dots\}$, which is independent of $N$. Let $\sigma_f^1(\mathbf{U})=\mathbf{V}$.

\subsection{The preperiodic case}
By Lemma \ref{Y1-lc}, to prove $Y^1(f)$ is locally connected, we just need to show  $L_{\mathbf{U}}(f)\cap Y^1(f)$ is a singleton for any $\mathbf{U}\in B^1(f)$. The preperiodic case will be discussed in this subsection; the wandering case will be proved in the next section. 

%

\begin{lemma}
\label{U-prep} 
Let $f\in\mathcal{Y}_d^1$, and let $\mathbf{U}\in B^1(f)$. 
Suppose $\mathbf{U}$ is preperiodic under $\sigma_f^1$. Then $L_{\mathbf{U}}(f)\cap Y^1(f)$ is a singleton. 
Moreover, if we denote it by $\{y\}$, then $y$ satisfies the following properties.  
\begin{enumerate} 
\item 
\label{U-prep-repel-parab}
$y$ is either pre-repelling or pre-parabolic. 

\item 
\label{U-prep-separate}
When $L_{\mathbf{U}}(f)=\{y\}$, there is only one external ray landing at $y$; when $L_{\mathbf{U}}(f)\neq\{y\}$, two external rays landing at $y$ separate $L_{\mathbf{U}}(f)$ from $Y^1(f)$. 
\end{enumerate}
\end{lemma}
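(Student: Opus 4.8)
\textbf{Proof proposal for Lemma \ref{U-prep}.}

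The plan is to reduce the branch case to the single-Fatou-component case already handled by Roesch--Yin (Theorem \ref{RY} and Theorem \ref{dynam-prop}). First I would treat the periodic case: suppose $\mathbf{U}=\{U_n\}_{n\in\mathbb{N}}\in B^1(f)$ is fixed by $\sigma_f^1$ (replacing $f$ by an iterate). Since $\mathbf{U}$ is a branch of the tree, the sectors $S_f(U_n)$ form a strictly decreasing nest, and the associated roots $x_n=\operatorname{root}(S_f(U_n))$ lie on $\partial U_{n}$; the dynamics on $\bigcup_{n\ge N}\overline{U_n}$ is eventually injective and shifts the branch, so $f$ maps $x_{n+1}$ to $x_{n}$ for large $n$ (up to a bounded initial segment). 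The set $L_{\mathbf{U}}(f)=K(f)\cap\bigcap_n\overline{S_f(U_n)}$ is a nested intersection of continua, hence a continuum, and $L_{\mathbf{U}}(f)\cap Y^1(f)=\bigcap_n\big(\overline{S_f(U_n)}\cap Y^1(f)\big)$. Because each $\overline{S_f(U_n)}\cap Y^1(f)$ contains $U_n$ and all Fatou components above it in the branch, this intersection is nonempty; I must show it is a single point $y$, and that $y$ is preperiodic.

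The key step is to identify $y$ dynamically. Since $f$ acts on the branch as a shift and the sectors are eventually mapped forward bijectively onto sectors higher in the nest, the intersection point is ``self-similar'' under $f$: $f$ maps a neighborhood of the nest's tail to a neighborhood of a deeper tail, forcing $f(y)=y$ (after accounting for the finite prefix, $y$ is preperiodic). Concretely, I would argue that $\bigcap_n \overline{S_f(U_n)}$ cannot contain two points of $Y^1(f)$: if it did, the Fatou components $U_n$ would have to accumulate to a nondegenerate continuum in $Y^1(f)$, but a single periodic Fatou component's boundary, to which this nest is eventually attached by the injectivity of $f$ on the tail, is a Jordan curve by Theorem \ref{RY}(\ref{U-Jordan}), and the shift structure pins the accumulation to the unique fixed point of the induced circle map $z\mapsto z^{\deg(f|_{U})}$ on that boundary (Theorem \ref{dynam-prop}(1)). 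That fixed point is $y$, and it is repelling or parabolic by Theorem \ref{dynam-prop}(\ref{dynam-prop-prep}), giving property (\ref{U-prep-repel-parab}). For the general preperiodic $\mathbf{U}$, pull back along the finite prefix: $f^m$ maps a neighborhood of $y$ homeomorphically (away from critical points, which is arranged since the prefix components are distinct and finitely many) onto a neighborhood of the periodic endpoint, transporting the singleton and the external-ray picture back; $y$ is then pre-repelling or pre-parabolic.

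For property (\ref{U-prep-separate}), once $\{y\}=L_{\mathbf{U}}(f)\cap Y^1(f)$ is established I would apply Lemma \ref{iff-diam-0} and Theorem \ref{RY}(\ref{RY-separate}) at the periodic endpoint: since at the periodic Fatou component the limb with root $y$ is either trivial (one external ray lands, by Theorem \ref{RY}(\ref{RY-separate})) or nontrivial (two external rays separate it from that Fatou component), and $L_{\mathbf{U}}(f)$ sits inside the sector cut out by those rays, the same dichotomy transfers to $Y^1(f)$ because $Y^1(f)$ lies on the $U_f(0)$-side of the nest; pulling back by $f^m$ (which maps external rays to external rays and preserves the separating configuration, being a local homeomorphism at $y$) yields the statement for the preperiodic $\mathbf{U}$. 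The main obstacle I anticipate is the singleton claim itself: ruling out that the nest $\{\overline{S_f(U_n)}\cap Y^1(f)\}$ shrinks to a nondegenerate subcontinuum of $Y^1(f)$. This requires genuinely using that $f$ acts as an expanding shift on the eventually-injective tail of the branch — so any nondegenerate limit would be an invariant continuum in the Julia set attached to a repelling (or parabolic) periodic point, contradicting local dynamics there — rather than any purely topological property of trees of Jordan disks, as the Remark following Lemma \ref{Y1-lc} warns.
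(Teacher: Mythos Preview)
Your approach differs from the paper's and has a genuine gap. The paper's proof never touches a periodic Fatou component at all: it tracks the \emph{angles} of the external rays bounding the sectors $S_f(U_n)=S_f(\theta_n,\theta'_n)$. Monotonicity gives limits $\theta,\theta'$; periodicity of $\mathbf{U}$ under $\sigma_f^1$ (say with period $p$ and shift $k\ge 1$, so $f^p(U_n)=U_{n-k}$ for large $n$) forces $d^p\theta\equiv\theta$ and $d^p\theta'\equiv\theta'$, hence both are rational. Then \cite[Lemma~3.9]{Kiwi01} yields a common landing point $y$ for $R_f(\theta)$ and $R_f(\theta')$, automatically repelling or parabolic. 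The singleton claim and the separation dichotomy in (\ref{U-prep-separate}) drop out immediately from this ray picture.

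Your plan instead tries to locate $y$ on the boundary of some periodic Fatou component and to invoke Theorems~\ref{RY} and~\ref{dynam-prop} there. But $y$ lies in $Y^1_\infty(f)$, which by definition is disjoint from every $X_n^1(f)$; in particular $y$ is not on $\partial U_n$ for any $n$, nor on the boundary of any periodic Fatou component. In a $\sigma_f^1$-periodic branch the components $U_n$ are themselves \emph{wandering} (the relation $f^p(U_n)=U_{n-k}$ shifts them backward), so there is no ``periodic Fatou component to which this nest is eventually attached'' on which to run the circle map $z\mapsto z^{\deg}$. Your fallback singleton argument (``a nondegenerate invariant continuum would contradict local dynamics at a repelling or parabolic periodic point'') is circular: you only learn that $y$ is repelling or parabolic \emph{after} identifying it via the rational angles and Kiwi's lemma. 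The missing ingredient is precisely the angle-tracking; without it you have no mechanism to produce landing external rays at $y$, to certify its periodic nature, or to collapse the nested intersection to a point.
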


\begin{proof}
Passing to an iterate of $\mathbf{U}$ if necessary, we may
assume that $\mathbf{U}$ is periodic. 
Write $\mathbf{U}=\{U_n\}_{n\in\mathbb{N}}$ and let $p$ denote the period of $\mathbf{U}$. 
The periodicity means there are $N\in\mathbb{N}$ and $k\in\mathbb{Z}$ such that $f^p(U_n) = U_{n-k}$ for any $n\geq N$. Since for any $U\in T^1(f)$, there exists an $n$ such that $f^n(U)\subset X_0^1(f)$, we have $k\geq1$. 

Write $S_f(U_n)=S_f(\theta_n,\theta'_n)$ for $n\geq1$. 
Then $\theta_1,\theta_2,\dots$ (resp. $\theta'_1,\theta'_2,\dots$) are in positive (resp. negative) cyclic order. So they have limits $\theta$ and $\theta'$ respectively. 

For $n$ large enough, $f^p$ is injective on $S_f(U_n)\setminus S_f(U_{n+1})$. 
Combining this with $f^p(U_n) = U_{n-k}$, we have  $d^p\cdot\theta_n \equiv \theta_{n-k}\pmod{\mathbb{Z}}$ and $d^p\cdot\theta'_n \equiv \theta'_{n-k}\pmod{\mathbb{Z}}$.  
Letting $n\to\infty$, we see that $\theta$ and $\theta'$ are fixed under multiplication by $d^p$. In particular, they are rational angles. By \cite[Lemma 3.9]{Kiwi01}, the external rays $R_f(\theta)$ and $R_f(\theta')$ land at a common point, say $y$. Then $y$ is repelling or parabolic. 
If $\theta=\theta'$, then $L_{\mathbf{U}}(f)=\{y\}$; if $\theta\neq \theta'$, then $R_f(\theta)$ and $R_f(\theta')$ separate $L_{\mathbf{U}}(f)$ from $Y^1(f)$. 
In either case, we have $L_{\mathbf{U}}(f)\cap Y^1(f)=\{y\}$. 
\end{proof}

\section{The wandering case}
\label{sect-wandering}

To deal with the wandering case, we need to use the puzzle technique. 

\subsection{Puzzle pieces}
\label{sect-puzzle}

Let $f\in \mathcal Y_d^1$. That is, the boundary $\partial U_f(0)$ contains a critical point or a parabolic point, and $J(f)$ is connected. 
Then we have the Fatou tree $Y^1(f)$ of level $1$. 
A \emph{graph} in $\mathbb{C}$ is a connected set, which can be written as the union of finitely many arcs with pairwise disjoint interiors. In the following, we will associate $f$ with a graph $\Gamma$. 

{\bf Equipotential curves and rays in $\mathbb{C}\setminus K(f)$.}
Recall that $B_f:\mathbb{C}\setminus K(f)\rightarrow\mathbb{C}\setminus \overline{\mathbb{D}}$ is the B\"ottcher map tangent to the identity at $\infty$. 
Let $\Omega_\infty = \{z\in\mathbb{C}\mid |B_f(z)|>r\}$ for some given $r>1$. 

{\bf Equipotential curves and rays in $U_f(0)$ near $\partial U_f(0)$.}
Let $z_0$, $z_1=f(z_0)$, $\dots$, $z_{p_0}=z_0$ be a repelling cycle in $\partial U_f(0)$ with period $p_0\geq2$.  
There are Jordan domains $\Omega'_0\Subset \Omega_0\Subset U_f(0)$ so that $f:\Omega_0\rightarrow \Omega'_0$ is proper and $U_f(0)\cap \crit(f)\subset \Omega_0$. 
For each $0\leq k<p_0$, there exists an open arc $\gamma_k$ in $U_f(0)\setminus \overline{\Omega_0}$ from $\partial \Omega_0$ to $z_k$ such that $\gamma_{k+1} \subset f(\gamma_k) \subset \gamma_{k+1}\cup \overline{\Omega_0}$, where $\gamma_{p_0}=\gamma_0$. 
This is easy to do as follows. Taking a Riemann mapping $h:U_f(0)\to \mathbb{D}$, we get a Blaschke product $\varphi:=h\circ f\circ h^{-1}$ with an attracting fixed point $h(0)$. Then the restriction of $\varphi$ in a neighborhood of the unit circle is a rational-like map, and it is hybrid equivalent to $z^{\deg(\varphi|_{\mathbb{D}})}$. A similar construction in an attracting basin appears in \cite[Lemma 2.1]{KS}. 

%

{\bf Equipotential curves in $U$.}
Let $U\subset X_0^1(f)$ be a parabolic periodic Fatou component with period $p$. 
It is known that there is a Fatou coordinate $\alpha: U\rightarrow \mathbb C$ satisfying $\alpha(f^p(z))=\alpha(z)+1$. 
Let $x\in\mathbb{R}$, and let $E_U=E_U(x)$ be the connected component containing $\lim_{n\in\mathbb{N}}f^{np}|_U$ of the closure of $\alpha^{-1}(\operatorname{Re}(z) = x)$. 
Then $E_U$ is a Jordan curve after requiring $$x\notin\{\Re(\alpha(c))+n\mid c\in U\cap \crit(f^p),n\in\mathbb{Z}\}.$$
Let $\Omega_U$ be the Jordan domain surrounded by $E_U$. By further requiring $$x< \Re(\alpha(c)){\rm\ for\ any\ }c\in U\cap \crit(f^p),$$ we have $\Omega_U\cap(U\cap \crit(f^p))\neq\emptyset$. 
It follows that $\# (E_U\cap \partial U) \geq2$. 
For $1\leq n<p$, let $\Omega_{f^{p-n}(U)} = f^{p-n}(U)\cap f^{-n}(\Omega_U)$. 
Similar constructions in a parabolic basin appear in \cite[\S 3.1.4]{Kiwi04} and \cite[\S 2.1]{RY}. 

{\bf The domain $\Omega$ and the graph $\Gamma$.} 
Let $$\Omega = \mathbb{C} \setminus  \bigcup\left\{\overline{\Omega_\infty},\overline{\Omega_0}, \overline{\Omega_U}\mid \text{parabolic periodic Fatou component $U\subset X_0^1(f)$}\right\}.$$ 
For any repelling or parabolic point in $J(f)$, it is known that there are finitely many external rays landing at it. 
Let $\Gamma$ be the union of $\partial\Omega$, internal rays $\gamma_k$, and tails $R_f(\theta)\setminus \Omega_\infty$ of external rays landing at $z_k$ or points in $E_U\cap \partial U$. See the left in Figure \ref{fig-puzzle}. 
Then $\Gamma$ is a graph, and $f^n(\Gamma)\cap (\Omega\setminus \Gamma) = \emptyset$ for any $n\in\mathbb{N}$. 

\begin{figure}[ht]
\centering
\includegraphics{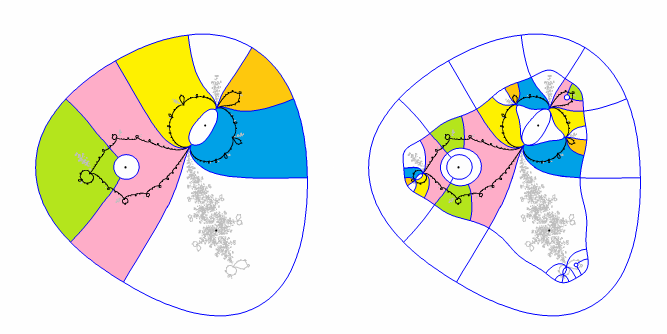}
\caption{Puzzle pieces of depth $0$ and depth $1$. The puzzle pieces intersecting $Y^1(f)$ are shown in colors, that are invariant under $f$.}
\label{fig-puzzle}
\end{figure}

{\bf Puzzle pieces.}
For $n\in\mathbb{N}$, a \emph{puzzle piece} of depth $n$ is a connected component of $f^{-n}(\Omega\setminus \Gamma)$. 
See Figure \ref{fig-puzzle}. Because $\Gamma\cap\partial U_f(0)$ contains a repelling cycle with period at least two and $\# (E_U\cap \partial U) \geq2$, every connected component of $\Omega\setminus\Gamma$ (i.e. a puzzle piece of depth $0$) is a Jordan disk. Because every connected component of the inverse of a Jordan disk under a nonconstant polynomial is still a Jordan disk, every puzzle piece of any depth is a Jordan disk. 

{\bf Marked puzzle pieces.}
A \emph{marked puzzle piece} is a pair $(P_n, z)$ of a puzzle piece $P_n$ of depth $n$ and a point $z\in\partial P_n\cap J(f)$. 
If there is a Fatou component $U$ in $\bigcup_{j\in\mathbb{N}}f^{-j}(X_0^1(f))$ such that $z\in\partial U$ and $P_n\cap U\neq \emptyset$, 
then it is determined by $(P_n,z)$, so we can denote it by $U(P_n,z)$. 
By the construction of the puzzle, $U(P_n,z)$ intersects $\bigcup_{j\in\mathbb{N}} f^{-j}(\Gamma)$. 
According to the attributes of $z$, $\partial P_n$ near $z$ and $U(P_n,z)$, there are six types of $(P_n,z)$ as in Table \ref{six-type}. 
For Types 3 and 6, it may happen that $U(P_n,z)\subset P_n$. 

\begin{table}[ht]
\centering
\includegraphics{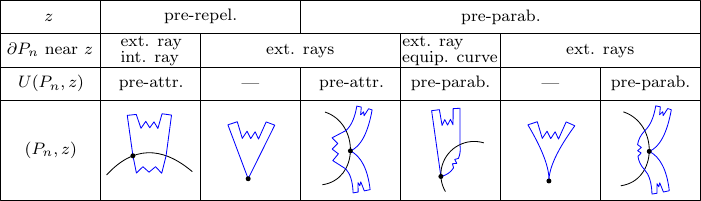}
\caption{Six types of marked puzzle pieces}
\label{six-type}
\end{table}

Since the iterate of $f$ does not change the attributes in Table \ref{six-type}, the marked puzzle pieces $(f(P_n),f(z))$ and $(P_n,z)$ are of the same type when $n\geq1$. Then by observing the puzzle pieces of depth $0$, only Types 1 and 3 or Types 4 and 6 can happen simultaneously for the same puzzle piece. 
If $P_n\cap Y^1(f)\neq \emptyset$, then $(P_n,z)$ is of Type 1, 3, 4, or 6. 

In the following of this section, let $f\in \mathcal Y_d^1$, and construct a puzzle as above. 

%

\subsection{Nests}
A \emph{nest} is a nested sequence $P_0\supset P_1\supset P_2\supset \cdots$ such that $P_n$ is a puzzle piece of depth $n$ for each $n\in\mathbb{N}$. 
We also write $\{P_n\}_{n\in\mathbb{N}}$ by $\mathbf{P}$. 
The \emph{end} of $\mathbf{P}$ is defined by $$\operatorname{end}(\mathbf{P})=\bigcap_{n\in\mathbb{N}}\overline{P_n}.$$ As the intersection of a shrinking sequence of closed Jordan disks, every end is full, connected and compact. 

Let $\mathcal{P}$ denote the collection of all nests. 
Then $\mathcal{P}$ is a topological Cantor set in the natural way, and $f$ induces a map 
$$\begin{array}{llll}
\sigma: &\mathcal{P} &\longrightarrow &\mathcal{P} \\
&\{P_n\}_{n\geq 0}   &\longmapsto &\{f(P_n)\}_{n\geq1}.
\end{array}$$
A nest $\mathbf{P}$ is \emph{preperiodic} if $\sigma^{p+q}(\mathbf{P})=\sigma^q(\mathbf{P})$ for some $p\geq 1,q\geq 0$. The nest is called \emph{periodic} if $q=0$. If there is no such $p,q$, then the nest is called \emph{wandering}. Then $\mathcal{P}$ has a partition $\mathcal{P} = \mathcal{P}_{\rm{p}}\sqcup\mathcal{P}_{\rm{w}}$, where $\mathcal{P}_{\rm{p}}$ and $\mathcal{P}_{\rm{w}}$ consist of preperiodic ones and wandering ones respectively.
Let $$\mathcal{P}^* = 
\{\{P_n\}_{n\in\mathbb{N}} \in\mathcal{P}\mid \text{for any $n$ there is $k\geq1$ so that $P_{n+k} \Subset P_n$}\}.$$


\begin{lemma} 
\label{nest-annulus}
Given $f\in \mathcal Y_d^1$, and construct a puzzle as above. 
Let $\mathbf{P}=\{P_n\}_{n\in\mathbb{N}}$ be a nest. 
Then $\mathbf{P}\in\mathcal{P}^*$ if and only if 
$\operatorname{end}(\mathbf{P}) \cap \bigcup_{n\in\mathbb{N}}f^{-n}(\Gamma)=\emptyset$.
\end{lemma}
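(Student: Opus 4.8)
I would prove the two implications separately, relying on two structural facts that follow from the construction above. First, every puzzle piece is a \emph{bounded} Jordan disk, so its closure is compact and the relation ``$P_{n+k}\Subset P_n$'' means exactly ``$\overline{P_{n+k}}\subset P_n$''; boundedness holds because $\Omega$ excludes the neighborhood $\overline{\Omega_\infty}$ of $\infty$. Second, the boundary of a depth-$n$ puzzle piece lies in $f^{-n}(\Gamma)$: since $P_n$ is a connected component of the open set $f^{-n}(\Omega\setminus\Gamma)$ of the locally connected space $\mathbb C$, we have $\partial P_n\subset \partial\big(f^{-n}(\Omega\setminus\Gamma)\big)\subset f^{-n}(\partial(\Omega\setminus\Gamma))$, and $\partial(\Omega\setminus\Gamma)\subset\Gamma$ because $\Gamma$ is closed and contains $\partial\Omega$. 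I would also record that, being a component of $f^{-n}(\Omega\setminus\Gamma)$, the piece $P_n$ is disjoint from $f^{-n}(\Gamma)$.

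For the implication $\mathbf P\in\mathcal P^*\Rightarrow\operatorname{end}(\mathbf P)\cap\bigcup_n f^{-n}(\Gamma)=\emptyset$, I would argue by contradiction: if some $z\in\operatorname{end}(\mathbf P)$ lies in $f^{-m}(\Gamma)$, then applying the defining property of $\mathcal P^*$ at level $m$ gives $k\ge1$ with $\overline{P_{m+k}}\subset P_m$, whence $z\in\operatorname{end}(\mathbf P)=\bigcap_n\overline{P_n}\subset\overline{P_{m+k}}\subset P_m$; this contradicts $P_m\cap f^{-m}(\Gamma)=\emptyset$.

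For the reverse implication I would argue contrapositively. Suppose $\mathbf P\notin\mathcal P^*$, so there is $n_0$ with $P_{n_0+k}\not\Subset P_{n_0}$, equivalently $\overline{P_{n_0+k}}\not\subset P_{n_0}$, for every $k\ge1$. Since $P_{n_0+k}\subset P_{n_0}$, this forces a point $z_k\in\partial P_{n_0+k}\cap\partial P_{n_0}$ (a point of $\overline{P_{n_0+k}}$ outside the open set $P_{n_0}$ lies on $\partial P_{n_0}$, hence outside $P_{n_0+k}$, hence on $\partial P_{n_0+k}$). The points $z_k$ lie on the compact Jordan curve $\partial P_{n_0}$, so a subsequence converges to some $z^*\in\partial P_{n_0}\subset f^{-n_0}(\Gamma)$. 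For any fixed $m$, every $k$ with $n_0+k\ge m$ satisfies $z_k\in\overline{P_{n_0+k}}\subset\overline{P_m}$, so $z^*\in\overline{P_m}$; therefore $z^*\in\operatorname{end}(\mathbf P)$, and $\operatorname{end}(\mathbf P)$ meets $\bigcup_n f^{-n}(\Gamma)$, as desired.

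The whole argument is formal point-set topology once the two structural facts above are in place, and I expect no serious obstacle. The one step needing care is the reverse implication: extracting a convergent subsequence of the ``touching points'' $z_k$ on $\partial P_{n_0}$ and checking that its limit genuinely belongs to $\operatorname{end}(\mathbf P)$ — this is where both the boundedness (compactness of $\partial P_{n_0}$) and the nesting $P_0\supset P_1\supset\cdots$ are used. No dynamical input beyond the construction of the graph $\Gamma$ enters the proof.
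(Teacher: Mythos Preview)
Your proof is correct and follows essentially the same approach as the paper. The only minor difference is in the reverse implication: the paper observes that the sets $\partial P_{n_0}\cap\partial P_{n_0+k}$ form a \emph{decreasing} sequence of nonempty compact sets (since a point on $\partial P_{n_0}\cap\partial P_{n_0+k'}$ with $k'>k$ lies in $\overline{P_{n_0+k}}\setminus P_{n_0}\subset\overline{P_{n_0+k}}\setminus P_{n_0+k}=\partial P_{n_0+k}$), so their intersection is already nonempty and no subsequence extraction is needed; your argument via a limit point of the $z_k$'s is equally valid.
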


\begin{proof}
Assume $\mathbf{P}\in\mathcal{P}^*$. 
Given any $n\in\mathbb{N}$. Choose $k\geq1$ so that $P_{n+k} \Subset P_n$. It follows from $P_n\cap f^{-n}(\Gamma)=\emptyset$ that $\big(\bigcap_{j\geq n+k}\overline{P_j}\big) \cap f^{-n}(\Gamma)=\emptyset$. Then $\operatorname{end}(\mathbf{P})=\bigcap_{j\geq n+k}\overline{P_j}$ and the arbitrariness of $n$ imply $\operatorname{end}(\mathbf{P}) \cap \bigcup_{n\in\mathbb{N}}f^{-n}(\Gamma)=\emptyset$. 

Assume $\mathbf{P}\notin\mathcal{P}^*$. 
There exists an $n\geq 0$ so that $\partial P_n\cap \partial P_{n+k}\neq \emptyset$ for any $k\geq 1$. By the nested property, $\{\partial P_n\cap \partial P_{n+k}\}_{k\geq1}$ is a sequence of decreasing compact sets. 
So we can choose a $z$ in the nonempty set $\bigcap_{k\geq 1}(\partial P_n\cap \partial P_{n+k})$. 
Then $z\in \operatorname{end}(\mathbf{P})$ and $z\in \partial P_n\subset f^{-n}(\Gamma)$ give $\operatorname{end}(\mathbf{P}) \cap \bigcup_{n\in\mathbb{N}}f^{-n}(\Gamma)\neq\emptyset$. 
\end{proof}

\begin{lemma} 
\label{nest-degenerate}
Every nest outside $\mathcal{P}^*$ is preperiodic. That is, $\mathcal{P}\setminus\mathcal{P}^*\subset\mathcal{P}_{\rm p}$.
\end{lemma}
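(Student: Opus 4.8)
**Lemma (Every nest outside $\mathcal{P}^*$ is preperiodic).**

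The plan is to show that if $\mathbf{P}=\{P_n\}_{n\in\mathbb{N}}\notin\mathcal{P}^*$, then $\operatorname{end}(\mathbf{P})$ meets $\bigcup_n f^{-n}(\Gamma)$ (by Lemma \ref{nest-annulus}), hence $\operatorname{end}(\mathbf{P})$ contains a point $z$ lying on some $f^{-m}(\Gamma)$; the point $z$ is either on the grand orbit of the external/internal rays in $\Gamma$, or on the grand orbit of the equipotential curves, or on the grand orbit of a landing point $z_k$ or a point of $E_U\cap\partial U$. In each case the key observation is that $\Gamma$ and the family of puzzle pieces of depth $0$ are \emph{finite} combinatorial objects, and that a nest is determined up to finitely many ambiguities by the sequence of edges of $f^{-n}(\Gamma)$ on which its boundary sits; once the boundary stabilizes (which is exactly the failure of the $\mathcal{P}^*$ condition), the combinatorics of the tail is forced to be eventually periodic.

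Concretely, I would argue as follows. Since $\mathbf{P}\notin\mathcal{P}^*$, fix $n_0$ and a point $z\in\bigcap_{k\geq1}(\partial P_{n_0}\cap\partial P_{n_0+k})$, as in the proof of Lemma \ref{nest-annulus}. Then $z\in\partial P_{n}$ for all $n\geq n_0$, so $f^{n}(z)\in\partial\big(f^{n}(P_n)\big)\subset\Gamma$ for all $n\geq n_0$; that is, the entire forward orbit $\{f^n(z)\}_{n\geq n_0}$ lies in the graph $\Gamma$. Now $\Gamma$ has only finitely many edges, and $f^n(\Gamma)\cap(\Omega\setminus\Gamma)=\emptyset$, so the itinerary of $z$ with respect to the edges of $\Gamma$ is well-defined. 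Because $z$ lies on $\partial P_n$ for every $n\geq n_0$, each $P_n$ (for $n\geq n_0$) is one of the finitely many puzzle pieces of depth $n$ whose closure contains $z$; more precisely, $P_n$ is determined by $z$ together with the choice of a "side" at $z$, i.e. by a local sector of $\mathbb{C}\setminus f^{-n}(\Gamma)$ at $z$. Since $z\in J(f)$ is a repelling/parabolic point or lands on one under iteration (all components of $\Gamma$ touch $J(f)$ only at such points), there are only finitely many external rays and finitely many internal rays accumulating at $f^n(z)$ for each $n$, and the local picture near $z$ stabilizes: the map $\sigma$ acts on the finite set of (point, local sector) data in the forward orbit of $(z,P_{n_0})$, so this data is eventually periodic.

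To finish, note that $\sigma^j(\mathbf{P})$ is for large $j$ recovered from the pair $(f^j(z),\text{local sector of }P_j\text{ at }f^j(z))$: indeed, once the nest's boundary is pinned at the orbit point, knowing the germ of the puzzle piece at that point and pulling back determines the whole tail of the nest. Eventual periodicity of this finite data then gives $\sigma^{p+q}(\mathbf{P})=\sigma^{q}(\mathbf{P})$ for suitable $p\geq1$, $q\geq0$, so $\mathbf{P}\in\mathcal{P}_{\rm p}$. The main obstacle, and the place requiring care, is the last implication: one must check that the tail of a nest is genuinely determined by the germ of $P_n$ at the fixed boundary point $z$, i.e. that two nests sharing such a germ at $z$ for all large $n$ must eventually coincide. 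This follows because two distinct puzzle pieces of the same depth whose closures both contain $z$ are separated by an edge of $f^{-n}(\Gamma)$ through $z$, and the number of such edges at $f^n(z)$ is uniformly bounded (finitely many rays land at a repelling or parabolic point); passing to a subsequence where the germ is literally constant and then propagating periodicity back along $\sigma$ closes the argument.
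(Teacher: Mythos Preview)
Your approach is the same as the paper's: use Lemma~\ref{nest-annulus} to find $z\in\operatorname{end}(\mathbf{P})\cap f^{-n_0}(\Gamma)$, observe $z\in\partial P_n$ for all $n\geq n_0$, deduce $z\in J(f)$, hence $f^{n_0}(z)\in\Gamma\cap J(f)$ is one of finitely many pre-repelling or pre-parabolic points, so $z$ is preperiodic, and conclude $\mathbf{P}$ is preperiodic.

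The paper's proof is far terser than yours: after showing $z$ is preperiodic it simply writes ``Therefore $\mathbf{P}$ is preperiodic'' with no further comment. You are right that this last implication is the step requiring care, and your outline for it --- that the marked pair $(f^n(P_n),f^n(z))$ lives in a finite set (finitely many depth-$0$ pieces, finitely many points in the orbit of $z$), that $f^p$ is a local diffeomorphism along the periodic orbit of $z$ and hence gives a bijection between depth-$(n+p)$ pieces touching $z$ and depth-$n$ pieces touching $z$, so the germ data is eventually periodic --- is essentially the standard justification. The cleanest way to finish is to note that the number of rays of $f^{-n}(\Gamma)$ through $z$ stabilizes for large $n$ (all external rays landing at a repelling or parabolic periodic point are themselves periodic), so for $n$ large the depth-$n$ pieces at $z$ are in bijection with a fixed finite set of sectors; the nesting forces $P_n$ to stay in one sector, and $\sigma^p$ permutes this finite sector set.

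One small gap in your write-up: you assert $z\in J(f)$ but do not argue it. The paper handles this by noting $z\in\partial P_n\subset f^{-n}(\Gamma)$ for all large $n$ and that $f^{-n}(\Gamma)$ accumulates on $J(f)$; alternatively, your own observation that the forward orbit $\{f^n(z)\}_{n\geq n_0}$ stays in $\Gamma$ already forces $z\in J(f)$, since points of $\Gamma$ in the Fatou set leave $\Gamma$ under iteration.
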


\begin{proof}
Let $\mathbf{P}=\{P_n\}_{n\in\mathbb{N}}\in\mathcal{P}\setminus\mathcal{P}^*$. 
By Lemma \ref{nest-annulus}, we can choose $z\in \operatorname{end}(\mathbf{P}) \cap \bigcup_{n\in\mathbb{N}}f^{-n}(\Gamma)$. 
There is a minimal $n_0$ such that $z\in f^{-n_0}(\Gamma)$. It follows that $z\in\partial P_n$ for any $n\geq n_0$. Because $\partial P_n\subset f^{-n}(\Gamma)$ for $n\in\mathbb{N}$, and $f^{-n}(\Gamma)$ tends to $J(f)$ in Hausdorff distance as $n\to\infty$, we have $z\in J(f)$. 
Then $z\in f^{-n_0}(\Gamma)\cap J(f)$ implies $z$ is preperiodic. Therefore $\mathbf{P}$ is preperiodic. 
\end{proof}

Let $\mathbf{P}$ and $\mathbf{P}^k\ (k\in\mathbb{N})$ be nests. 
Then $\mathbf{P}$ is a limit point of $\mathbf{P}^k$ if and only if for any $n\in\mathbb{N}$, the index set $\{k\in\mathbb{N}\mid P^k_n = P_n\}$ is infinite. 
The \emph{omega limit set} $\omega(\mathbf{P})$ of $\mathbf{P}$ is the set of all limit points of $\sigma^k(\mathbf{P})$ as $k\to\infty$. 
Then $\omega(\sigma(\mathbf{P}))=\omega(\mathbf{P})$, 
$\sigma(\omega(\mathbf{P}))\subset \omega(\mathbf{P})$ and 
$\omega(\mathbf{Q})\subset \omega(\mathbf{P})$ for any $\mathbf{Q}\in \omega(\mathbf{P})$. 
The nest $\mathbf{P}$ is called \emph{recurrent} if $\mathbf{P}\in \omega(\mathbf{P})$. 


\subsection{The bounded degree condition and the elevator condition}
\begin{definition}
[The bounded degree condition and the elevator condition]
A nest $\{P_n\}_{n\in\mathbb{N}}$ is said to satisfy the \emph{bounded degree condition}, if there exists a sequence $\{n_k\}_{k\in\mathbb{N}}$ tending to $\infty$ and a constant $D$ so that $\deg(f^{n_k}|_{P_{n_k}})\leq D$ for any $k\in\mathbb{N}$. Furthermore, if there exist puzzle pieces $Q_N\Subset Q_0$ of depth $N$ and depth $0$ respectively so that $f^{n_k}:(P_{n_k},P_{n_k+N})\to (Q_0,Q_N)$ for any $k\in\mathbb{N}$, then the nest $\{P_n\}_{n\in\mathbb{N}}$ is said to satisfy the \emph{elevator condition}. The triple $(Q_0,Q_N,n_k)$ is called an \emph{elevator} for the nest $\{P_n\}_{n\in\mathbb{N}}$. 
\end{definition}

\begin{lemma}
\label{elevator}
The end of a nest with the elevator condition is a singleton. 
\end{lemma}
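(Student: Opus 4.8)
The plan is to surround $\operatorname{end}(\mathbf{P})$ by infinitely many pairwise disjoint annuli of a common positive modulus, each separating $\operatorname{end}(\mathbf{P})$ from the complement of one fixed puzzle piece; their moduli then sum to $\infty$, which is impossible unless $\operatorname{end}(\mathbf{P})$ is a single point.

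First I would fix an elevator $(Q_0,Q_N,n_k)$ and the degree bound $D$ supplied by the hypothesis, so that $f^{n_k}:P_{n_k}\to Q_0$ is a proper map of degree at most $D$ with $f^{n_k}(P_{n_k+N})=Q_N$ for every $k$. Since $Q_N\Subset Q_0$ and $f^{n_k}$ sends $\partial P_{n_k}$ onto $\partial Q_0$, no point of $\partial P_{n_k}$ can map into $\overline{Q_N}$; hence $\overline{P_{n_k+N}}\subset P_{n_k}$, so (all puzzle pieces being Jordan disks) $A_k:=P_{n_k}\setminus\overline{P_{n_k+N}}$ is a genuine topological annulus. In particular $\mathbf{P}\in\mathcal{P}^*$.

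The core step is the estimate $\operatorname{mod}(A_k)\ge \frac{1}{D}\operatorname{mod}(Q_0\setminus\overline{Q_N})=:c>0$. Here I would work with the family $\Gamma_k$ of arcs in $A_k$ joining $\partial P_{n_k+N}$ to $\partial P_{n_k}$, whose extremal length equals $\operatorname{mod}(A_k)$. Given $\gamma\in\Gamma_k$, the endpoints of $f^{n_k}\circ\gamma$ lie on $\partial Q_N$ and on $\partial Q_0$; truncating this image arc at the last time it meets $\partial Q_N$ yields an arc joining the two boundary components of $Q_0\setminus\overline{Q_N}$ within that annulus. Pulling back by $f^{n_k}$ an admissible metric for the arc family of $Q_0\setminus\overline{Q_N}$, and using that $f^{n_k}|_{P_{n_k}}$ has degree at most $D$ so that the pulled-back metric multiplies area by at most $D$, gives the inequality. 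This modulus comparison is the delicate point; everything else is routine bookkeeping about depths of puzzle pieces.

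Finally, I would pass to a subsequence $n_{k_1}<n_{k_2}<\cdots$ with $n_{k_{j+1}}\ge n_{k_j}+N$. Then $A_{k_{j+1}}\subset P_{n_{k_{j+1}}}\subset P_{n_{k_j}+N}$, so the $A_{k_j}$ are pairwise disjoint, and each $A_{k_j}$ separates $\operatorname{end}(\mathbf{P})\subset\overline{P_{n_{k_j}+N}}$ from $\mathbb{C}\setminus P_{n_{k_j}}\supset\mathbb{C}\setminus P_{n_{k_1}}$. Since $\operatorname{end}(\mathbf{P})$ is a full continuum in the interior of the Jordan disk $P_{n_{k_1}}$, the set $\mathcal{A}:=P_{n_{k_1}}\setminus\operatorname{end}(\mathbf{P})$ is a topological annulus containing the pairwise disjoint essential annuli $A_{k_1},A_{k_2},\dots$, so the Gr\"otzsch inequality gives $\operatorname{mod}(\mathcal{A})\ge\sum_j\operatorname{mod}(A_{k_j})\ge\sum_j c=\infty$. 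As the complementary component $\widehat{\mathbb{C}}\setminus P_{n_{k_1}}$ of $\mathcal{A}$ is a non-degenerate continuum, an annulus of infinite modulus forces the other complementary component, namely $\operatorname{end}(\mathbf{P})$, to be a singleton. The one serious obstacle is the modulus comparison of the third paragraph; the remaining steps are elementary.
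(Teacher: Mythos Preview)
Your proof is correct and follows essentially the same approach as the paper's: the modulus inequality $\operatorname{mod}(P_{n_k}\setminus\overline{P_{n_k+N}})\ge\operatorname{mod}(Q_0\setminus\overline{Q_N})/D$, a subsequence with $n_{k_{j+1}}\ge n_{k_j}+N$, and Gr\"otzsch's inequality to force infinite modulus. The paper's proof is a terse four sentences stating exactly these three ingredients; you have simply expanded the justification of the modulus comparison via extremal length and added the verification that the $A_k$ are genuine annuli, which the paper leaves implicit.
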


\begin{proof}
Let $\mathbf{P}=\{P_n\}_{n\in\mathbb{N}}$ be a nest with the elevator condition as above. Then ${\rm mod}(P_{n_k}\setminus \overline{P_{n_k+N}}) \geq {\rm mod}(Q_0\setminus \overline{Q_N})/D$. After passing to a subsequence, assume $n_{k+1}\geq n_k+N$ for any $k\in\mathbb{N}$. By Gr\"otzsch's inequality, ${\rm mod}(P_0\setminus\operatorname{end}(\mathbf{P}))=\infty$. 
Therefore $\operatorname{end}(\mathbf{P})$ is a singleton. 
\end{proof}

Let $$\mathcal P^\# = \{\{P_n\}_{n\in\mathbb{N}}\in\mathcal P\mid \text{$P_n\cap Y^1(f)\neq \emptyset$ for any $n\in\mathbb{N}$} \}.$$

\begin{lemma} 
\label{BD2elevator}
Every nest in $\mathcal{P}^*\cap \mathcal P^\#$ with the bounded degree condition satisfies the elevator condition. 
\footnote{
The nest in $\mathcal P^\#$ is necessary, since it can happen that a nest in $\mathcal{P}^*\setminus \mathcal P^\#$ with the bounded degree condition does not satisfy the elevator condition. 
}
\end{lemma}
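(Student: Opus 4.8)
The plan is to show that the bounded degree condition, which only controls the degree $\deg(f^{n_k}|_{P_{n_k}})$ along a subsequence, can be upgraded to the much more rigid elevator condition once we know every piece in the nest meets the Fatou tree $Y^1(f)$. The point is that boundedness of the degree forces the combinatorics of the pieces $P_{n_k}$ near $Y^1(f)$ to stabilize, and then we can pick a single pair of reference pieces $Q_N\Subset Q_0$ that works for all large $k$. First I would set $\mathbf P=\{P_n\}_{n\in\mathbb N}\in\mathcal P^*\cap\mathcal P^\#$ with the bounded degree condition, fix the sequence $\{n_k\}$ and the bound $D$, and record the finitely many possibilities: the image puzzle piece $f^{n_k}(P_{n_k})$ is a puzzle piece of depth $0$, of which there are only finitely many, so after passing to a subsequence we may assume $f^{n_k}(P_{n_k})=Q_0$ for a fixed depth-$0$ piece $Q_0$. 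Because $P_{n_k}\cap Y^1(f)\neq\emptyset$ and $f^{n_k}(Y^1(f)\cap\text{(relevant piece)})$ lands inside $Y^1(f)$ — here I would use that $Y^1(f)$ is forward invariant in the sense that $f(X_n^1(f))\supset$ its predecessor and, more precisely, that the Fatou components in $Y^1(f)$ map to Fatou components in $Y^1(f)$ — the piece $Q_0$ can be taken to be one of the finitely many depth-$0$ puzzle pieces meeting $Y^1(f)$, and these are exactly the pieces of Types 1, 3, 4, 6 discussed after Table \ref{six-type}.

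Next, I would produce the inner reference piece $Q_N$. Since $\mathbf P\in\mathcal P^*$, for $n_k$ there is $k'\geq 1$ with $P_{n_k+k'}\Subset P_{n_k}$; the bounded degree $\deg(f^{n_k}|_{P_{n_k}})\leq D$ means the covering $f^{n_k}:P_{n_k+k'}\to f^{n_k}(P_{n_k+k'})$ has degree at most $D$, and $f^{n_k}(P_{n_k+k'})$ is a puzzle piece of some depth $N_k$ compactly contained in $Q_0$. The content of $\mathcal P^*$ together with the degree bound is that the depths $N_k$ can be taken from a \emph{bounded} range: a compactly-contained subpiece of $Q_0$ obtained by pulling back through a degree-$\leq D$ map of depth difference $k'$ can be realized at uniformly bounded depth, because the number of puzzle pieces of each depth inside $Q_0$ that arise as such pullbacks is finite once the degree is bounded. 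Hence, after a further passage to a subsequence, $N_k\equiv N$ is constant and $f^{n_k}(P_{n_k+k'})=Q_N$ is a fixed depth-$N$ piece with $Q_N\Subset Q_0$. Re-indexing so that $P_{n_k+N}$ rather than $P_{n_k+k'}$ is the relevant subpiece — using that $\deg(f^N|_{\text{piece}})$ is controlled and that pieces of the same depth inside a piece are nested consistently — gives $f^{n_k}:(P_{n_k},P_{n_k+N})\to(Q_0,Q_N)$ for all $k$, which is precisely the elevator condition with elevator $(Q_0,Q_N,n_k)$.

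The main obstacle I anticipate is the bookkeeping in the previous paragraph: matching the depth of the inner piece. The bounded degree condition a priori gives, for each $k$, \emph{some} compactly contained descendant $P_{n_k+k'}$ with $k'=k'(k)$ possibly unbounded, and one must argue that the image $f^{n_k}(P_{n_k+k'})$, although sitting at depth $N_k=n_k+k'-n_k = k'$... (careful: its depth is $k'(k)$, which is unbounded), nonetheless equals one of \emph{finitely many} puzzle pieces $Q_N$ compactly inside $Q_0$. The resolution is that only the \emph{combinatorial position} of $Q_N$ relative to $Q_0$ matters for running Gr\"otzsch, and for pieces meeting $Y^1(f)$ this position is governed by the tree $(T^1(f),<)$ and the finitely many sector types; bounded degree means only finitely many such positions occur as $k$ varies, so one extracts a constant combinatorial type and then replaces $Q_N$ by the (bounded-depth) puzzle piece realizing that type. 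This is where the hypothesis $\mathbf P\in\mathcal P^\#$ is essential — the footnote warns that for nests in $\mathcal P^*\setminus\mathcal P^\#$ the descendant pieces can wander off to genuinely unbounded depth with no stabilization, so no fixed $(Q_0,Q_N)$ exists. Once $(Q_0,Q_N,n_k)$ is in hand, Lemma \ref{elevator} is not needed for the proof of this lemma, but it is the payoff: the end of $\mathbf P$ is a singleton. $\qed$
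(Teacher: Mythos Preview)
Your argument is essentially the paper's Case 1, but you have missed the genuinely hard Case 2, and your attempted ``resolution'' of the obstacle you correctly identify does not work.

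Here is the gap. After passing to a subsequence so that $f^{n_k}(P_{n_k})=Q_0$ is fixed, consider the full limit nest $\mathbf Q=\{Q_m\}_{m\in\mathbb N}$ of $\sigma^{n_k}(\mathbf P)$, i.e.\ (after a further subsequence) $f^{n_k}(P_{n_k+m})=Q_m$ for all $m\leq k$. Your proof needs a fixed $N$ with $Q_N\Subset Q_0$; this is exactly the statement $\mathbf Q\in\mathcal P^*$. But nothing you have assumed forces this. It is entirely possible that $\mathbf Q\notin\mathcal P^*$: all the $Q_m$ share a boundary point $z\in\Gamma\cap J(f)$ with $Q_0$, so that the first depth $k'(k)$ at which $P_{n_k+k'}\Subset P_{n_k}$ really does go to infinity with $k$. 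Your claim that ``bounded degree means only finitely many such positions occur'' is not true in this situation, and the vague appeal to ``combinatorial position governed by the tree $(T^1(f),<)$'' does not produce a fixed pair $(Q_0,Q_N)$ with $Q_N\Subset Q_0$, which is what the elevator condition literally demands.

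The paper handles this degenerate case by a different mechanism. From $\mathbf Q\notin\mathcal P^*$ and Lemma \ref{nest-degenerate} one gets that $\mathbf Q$ is preperiodic; from $\mathbf P\in\mathcal P^\#$ and the closedness of $\mathcal P^\#$ one gets $\mathbf Q\in\mathcal P^\#$, and then the marked-piece classification (Types 1, 3, 4, 6) together with the limb decomposition of Theorem \ref{RY} forces $\operatorname{end}(\mathbf Q)=\{z\}$ to be a singleton. This is where the hypothesis $\mathcal P^\#$ is actually used---not to bound finitely many ``positions'' as you suggest, but to guarantee the degenerate limit nest shrinks to a point. One then exploits the periodicity of $\sigma^q(\mathbf Q)$: for each $k$ there is a last time $t_k$ at which $\sigma^{n_k}(\mathbf P)$ still follows $\mathbf Q$ along its periodic tail, and at the moment of departure one obtains a piece $R_k\Subset f^q(Q_q)$ at a \emph{fixed} depth (after one more subsequence), with the compact containment verified by checking that $R_k$ cannot meet the external/internal rays through $f^q(z)$. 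The new elevator is built from $(f^q(Q_q),R_k)$ with a modified subsequence $n_k'$, and the degree stays bounded by $D\cdot d^{tp}$ for a fixed $t$. None of this is present in your outline.
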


\begin{proof}
Let $\mathbf{P}=\{P_n\}_{n\in\mathbb{N}}\in \mathcal{P}^*\cap \mathcal P^\#$. Suppose $\mathbf{P}$ satisfies the bounded degree condition. 
By definition, there exist $n_k\to \infty$ and $D$ so that $\deg(f^{n_k}|_{P_{n_k}})\leq D$. 
Let $\mathbf{Q}=\{Q_n\}_{n\in\mathbb{N}}$ be a limit point of $\{\sigma^{n_k}(\mathbf{P})\}_{k\in\mathbb{N}}$. 
After passing to a subsequence, we can assume $f^{n_k}(P_{n_k+k}) = Q_k$ for any $k\in\mathbb{N}$. 
 
\vspace{4 pt}
\textbf{Case 1:} $\mathbf{Q}\in \mathcal{P}^*$. 
\vspace{4 pt}
 
In this case, there is $N>0$ so that $Q_N\Subset Q_0$. For $k\geq N$, we have $f^{n_k}:(P_{n_k},P_{n_k+N})\to (Q_0,Q_N)$. The elevator condition follows. 


\vspace{4 pt}
\textbf{Case 2:} $\mathbf{Q}\notin \mathcal{P}^*$. 
\vspace{4 pt}

Note that $\mathcal{P}^\#$ is a closed subset of $\mathcal{P}$. It follows from $\mathbf{P}\in \mathcal{P}^\#$ that $\mathbf{Q}\in \mathcal{P}^\#$. 
We claim that $\operatorname{end}(\mathbf{Q})$ is a singleton. 
By Lemma \ref{nest-annulus}, there is $z\in\operatorname{end}(\mathbf{Q}) \cap f^{-N}(\Gamma)$ for some $N$ large enough. Let $n\geq N$. Then $z\in\partial Q_n\cap J(f)$. 
Since $Q_n\cap Y^1(f)\neq\emptyset$, the marked puzzle piece $(Q_n,z)$ is of Type 1, 3, 4, or 6. So $U(Q_n,z)$ is valid, which is independent of $n\geq N$. By the limb decomposition of $K(f)$ for $U(Q_n,z)$ (Theorem \ref{RY}), we have $\operatorname{end}(\mathbf{Q})=\{z\}$.  

By Lemma \ref{nest-degenerate}, the nest $\mathbf{Q}$ is preperiodic. Let $q\geq0$ be the preperiod and $p\geq1$ be the period of $\mathbf{Q}$. 
Because $\operatorname{end}(\mathbf{Q})$ is a singleton, we can choose $t$ so that $f^j(Q_{q+tp})\cap \crit(f)=\emptyset$ for $0\leq j<q+p$. 
For $k$ large enough, because $f^{n_k}(P_{n_k+k}) = Q_k$, there is a unique $t_k$ such that $f^{n_k}(P_{n_k+q+t_k p})=Q_{q+t_k p}$ and $f^{n_k}(P_{n_k+q+(t_k+1) p})\neq Q_{q+(t_k+1) p}$. Let $n'_k=n_k+q+(t_k-t) p$ and $n''_k=n_k+q+(t_k+1) p$. Then $n_k<n'_k<n''_k$. 
It follows from 
$$P_{n'_k}
\xrightarrow[\deg\leq D]{f^{n_k}}
Q_{q+(t_k-t) p}
\xrightarrow[\deg=1]{f^{q+(t_k-2t)p}}
f^{q}(Q_{q+t p})
\xrightarrow[\deg\leq d^{t p}]{f^{t p}}
f^q(Q_q)$$
that $\deg(f^{n_k'}|_{P_{n_k'}})\leq D':=D d^{t p}$.  See Figure \ref{fig-elevator}. 

\begin{figure}[ht]
\centering
\includegraphics{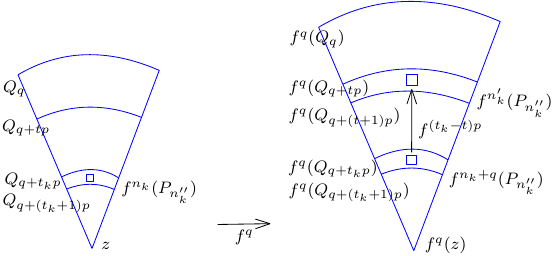}
\caption{A nest $\mathbf{P}\in\mathcal{P}^\#\cap\mathcal{P}^*$ with an omega limit $\mathbf{Q}\in\mathcal{P}^\#\setminus \mathcal{P}^*$}
\label{fig-elevator}
\end{figure}

To complete the proof, we need to show $R_k:=f^{n'_k}(P_{n''_k}) \Subset f^q(Q_q)$. Since $f^q(z)$ is periodic, we have $f^q(z)\in \Gamma\cap J(f)$. So $f^q(z)\in\partial f^q(Q_q)$.  
By modifying $t$, we further require that $\partial f^q(Q_{q+tp})\cap \partial f^q(Q_q)$ is an
arc contained in the closure of one external ray (Type 4), two external rays (Type 3 or 6), or one external ray and one internal ray (Type 1) landing at $f^q(z)$.
It follows from $R_k\subset f^q(Q_{q+tp})\subset f^q(Q_q)$ that $\partial R_k\cap \partial f^q(Q_q)\subset \partial f^q(Q_{q+tp})\cap \partial f^q(Q_q)$. If $\partial R_k\cap \partial f^q(Q_q)$ is nonempty, then it intersects an external ray landing at $f^q(z)$, which implies $f^q(z)\in \partial R_k$. This is impossible, because $R_k\subset f^q(Q_{q+tp}) \setminus f^q(Q_{q+(t+1)p})$. Therefore $\partial R_k\cap \partial f^q(Q_q)=\emptyset$. That is, $R_k\Subset f^q(Q_q)$. 

Since $R_k$ is a puzzle piece of depth $(t+1)p$, after passing to a subsequence, we may assume $R_k$ is a puzzle piece independent of $k$. Then $f^{n_k'}:(P_{n'_k}, P_{n''_k})\to (f^q(Q_q), R_k)$ gives the elevator condition.
\end{proof}

\subsection{Verifications of the bounded degree condition}

\begin{definition}
[The first entry time]
Let $\mathbf{P}=\{P_n\}_{n\in\mathbb{N}}$ be a nest 
and let $Q_k$ be a puzzle piece of depth $k$. The \emph{first entry time} of $\mathbf{P}$ into $Q_k$ is the minimal $n\geq 1$ such that $f^n(P_{n+k})=Q_k$. If no such integer exists, let it be $\infty$. 
\end{definition}

\begin{lemma}
\label{first-entry-time}
Let $r$ be the first entry time of a nest $\mathbf{P}=\{P_n\}_{n\in\mathbb{N}}$ into a puzzle piece $Q_k$ of depth $k$. If $r<\infty$, then the $r$ puzzle pieces $P_{r+k}$, $\dots$, $f^{r-1}(P_{r+k})$ are pairwise disjoint and $\deg(f^r|_{P_{r+k}})\leq d^{d-1}$. 
\end{lemma}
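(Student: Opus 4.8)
\textit{Plan.} The statement bundles two claims: (i) the $r$ puzzle pieces $P_{r+k}, f(P_{r+k}), \dots, f^{r-1}(P_{r+k})$ are pairwise disjoint, and (ii) the degree bound $\deg(f^r|_{P_{r+k}})\leq d^{d-1}$. The key observation that drives everything is that $r$ is the \emph{first} time the forward orbit of $P_{r+k}$ hits $Q_k$; before that moment the orbit never visits the target piece. I would set $P' = P_{r+k}$ for brevity and write $P'_j = f^j(P')$ for $0\leq j\leq r$, so that $P'_0 = P_{r+k}$ is a puzzle piece of depth $r+k$, $P'_j$ is a puzzle piece of depth $r+k-j$, and $P'_r = f^r(P_{r+k}) = Q_k$.

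\textit{Step 1: disjointness.} Suppose toward a contradiction that $P'_i\cap P'_j\neq\emptyset$ for some $0\leq i<j<r$. Two puzzle pieces with nonempty intersection, one of depth $r+k-i$ and one of depth $r+k-j$, must be nested (the puzzle pieces of a given depth are the connected components of $f^{-(r+k-i)}(\Omega\setminus\Gamma)$, and any lower-depth piece either contains or is disjoint from each higher-depth piece, because $f^n(\Gamma)\cap(\Omega\setminus\Gamma)=\emptyset$). Since $r+k-j < r+k-i$, we get $P'_i\subset P'_j$, i.e. $f^i(P')\subset f^j(P')$. Applying $f^{r-j}$ gives $f^{r-j+i}(P') \subset f^r(P') = Q_k$; as $f^{r-j+i}(P')$ is a puzzle piece of the same depth $k$ as $Q_k$ and it is contained in $Q_k$, it \emph{equals} $Q_k$. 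But $1\leq r-j+i < r$, contradicting the minimality of the first entry time $r$. (One should check the endpoint cases $i=0$ separately if needed, but the same argument applies since $r-j+0\geq 1$.) Hence the $P'_0,\dots,P'_{r-1}$ are pairwise disjoint.

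\textit{Step 2: the degree bound.} For each $j$ with $0\leq j < r$, the map $f\colon P'_j\to P'_{j+1}$ is a proper map between Jordan disks, so $\deg(f|_{P'_j}) = 1 + \#(P'_j\cap\crit(f))$ counted with multiplicity (Riemann–Hurwitz for a branched cover of the disk). By the chain rule, $\deg(f^r|_{P_{r+k}}) = \prod_{j=0}^{r-1}\deg(f|_{P'_j})$. Now use Step 1: the sets $P'_0,\dots,P'_{r-1}$ are pairwise disjoint, so each critical point of $f$ lies in at most one of them, hence $\sum_{j=0}^{r-1}\#(P'_j\cap\crit(f)) \leq \#\crit(f) \leq d-1$ (a degree $d$ polynomial has at most $d-1$ critical points in $\mathbb{C}$, counted with multiplicity). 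Writing $m_j = \#(P'_j\cap\crit(f))$, we have $\deg(f^r|_{P_{r+k}}) = \prod_j (1+m_j)$ subject to $\sum_j m_j \leq d-1$ with each $m_j\geq 0$. Since $1+m\leq 2^m$ for integers $m\geq 0$ — actually the sharper elementary bound $\prod(1+m_j)\leq \prod 2^{m_j} = 2^{\sum m_j}$ is wasteful; the clean bound is $\prod_j(1+m_j) \le d^{\sum_j m_j/1}$... let me just use: among all ways of distributing a total of at most $d-1$, the product $\prod(1+m_j)$ is maximized and the maximum is at most $d^{d-1}$ — one verifies $\prod_j(1+m_j)\leq \prod_j d^{m_j} = d^{\sum_j m_j}\leq d^{d-1}$, using $1+m\leq d^m$ for $0\le m\le d-1$ (true since $d\geq 3$). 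This gives the claimed bound.

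\textit{Anticipated obstacle.} The disjointness step is the crux: everything rests on the dichotomy that two puzzle pieces are either nested or disjoint, and on pushing any hypothetical overlap forward to contradict the minimality of $r$. The one point that needs care is making sure the pushed-forward piece $f^{r-j+i}(P')$ really has depth exactly $k$ and really lies in $Q_k$ — this uses that $f^j$ maps a puzzle piece of depth $n$ onto a puzzle piece of depth $n-j$ (valid as long as $n\geq j$, which holds here), together with the fact that distinct puzzle pieces of the same depth are disjoint. The degree bookkeeping in Step 2 is then routine, modulo being slightly careful that critical points are counted with multiplicity consistently in Riemann–Hurwitz and in the cardinality bound $\#\crit(f)\leq d-1$.
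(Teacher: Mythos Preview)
Your overall strategy matches the paper's, but Step~1 contains a real error at exactly the spot you flagged in your ``Anticipated obstacle''. The piece $P'=P_{r+k}$ has depth $r+k$, so $f^{r-j+i}(P')$ has depth $(r+k)-(r-j+i)=k+(j-i)$, which is strictly greater than $k$ since $j>i$; it is \emph{not} of the same depth as $Q_k$, and your claimed equality $f^{r-j+i}(P')=Q_k$ does not follow. Moreover, even if that equality held, it would not contradict minimality of $r$ as stated: the first entry time is the minimal $n\geq 1$ with $f^n(P_{n+k})=Q_k$, so you must produce the piece $P_{n+k}$ of the nest at the correct depth, not $P_{r+k}$.

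The fix (which is what the paper does) is to pass to the appropriate nest piece before pushing forward. From $f^i(P_{r+k})\subset f^j(P_{r+k})$ and $P_{r+k}\subset P_{r+k-(j-i)}$, the piece $f^i(P_{r+k-(j-i)})$ has depth $r+k-j$ and contains $f^i(P_{r+k})$, hence equals $f^j(P_{r+k})$. Now apply $f^{r-j}$: you get $f^{r-(j-i)}(P_{r-(j-i)+k})=Q_k$ with $1\leq r-(j-i)<r$, which is the correct contradiction.

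For Step~2, once disjointness is in hand, the argument can be shortened: at most $d-1$ of the pieces $P'_0,\dots,P'_{r-1}$ contain a critical point (there are only $d-1$ of them in $\mathbb{C}$), and $\deg(f|_{P'_j})\leq d$ always, so the product is at most $d^{d-1}$. This avoids the detour through $1+m\leq d^m$.
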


\begin{proof}
Assume $f^{n_1}(P_{r+k})\cap f^{n_2}(P_{r+k})\neq \emptyset$ for some $0\leq n_1<n_2\leq r-1$. Then $f^{n_1}(P_{r+k})\subset f^{n_2}(P_{r+k})$. It follows that $f^{n_1}(P_{r+k-(n_2-n_1)}) = f^{n_2}(P_{r+k})$. So
$f^{r+n_1-n_2}(P_{r+n_1-n_2+k}) 
= f^{r-n_2}(f^{n_2}(P_{r+k}))
= Q_k$. 
This contradicts the minimality of $r$.
Since every critical point of $f$ appears in $P_{r+k}, \dots, f^{r-1}(P_{r+k})$ at most once, we have $\deg(f^r|_{P_{r+k}})\leq d^{d-1}$. 
\end{proof}

\begin{lemma}
\label{BD-1}
Let $\mathbf{P}$ be a wandering nest. If $\omega(\mathbf{P})$ contains a preperiodic nest, then $\mathbf{P}$ satisfies the bounded degree condition. 
\end{lemma}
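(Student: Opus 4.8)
The plan is to exploit the structure of $\omega(\mathbf P)$ as a forward-invariant set containing a preperiodic nest, and to produce infinitely many returns of $\mathbf P$ through a fixed puzzle piece with controlled degree. Concretely: suppose $\mathbf Q \in \omega(\mathbf P)$ is preperiodic, with preperiod $q \geq 0$ and period $p \geq 1$, so $\sigma^{q+p}(\mathbf Q) = \sigma^q(\mathbf Q)$. Let $\mathbf{R} = \sigma^q(\mathbf Q)$, which is a periodic nest in $\omega(\mathbf P)$ (using forward-invariance of $\omega(\mathbf P)$). Fix $k$ and look at the puzzle piece $R_k$ of depth $k$. Since $\mathbf P$ is wandering, $\mathbf P \neq \sigma^j(\mathbf P)$ for all $j$, and since $\mathbf R \in \omega(\mathbf P)$, there are infinitely many indices $m$ with $\sigma^m(\mathbf P)$ agreeing with $\mathbf R$ up to depth $k$, i.e.\ $f^m(P_{m+k}) = R_k$. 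The idea is that between two consecutive such visiting times, or rather by considering the first entry time into $R_k$ from suitable late stages of the nest, we can invoke Lemma \ref{first-entry-time} to bound the degree.

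The key steps, in order, are as follows. First I would establish that $\omega(\mathbf P)$ contains a \emph{periodic} nest $\mathbf R$ (not just preperiodic), using $\sigma(\omega(\mathbf P)) \subset \omega(\mathbf P)$. Second, I would fix a depth $k$ and consider the tail of $\mathbf P$: for each large $n$, consider the nest $\sigma^n(\mathbf P)$ and its first entry time into $R_k$. Because $\mathbf R \in \omega(\mathbf P)$, there exist arbitrarily large $n$ for which $\sigma^n(\mathbf P)$ itself already enters $R_k$ at some finite time; more usefully, I would pick a large starting depth $N$ and consider the first entry time $r_N$ of the nest $\{P_{N+n}\}_{n \in \mathbb N}$ (which is $\sigma^N(\mathbf P)$) into $R_k$. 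Since $\mathbf R$ is periodic and lies in $\omega(\mathbf P)$, these entry times are finite for infinitely many $N$, giving a sequence $N_j \to \infty$ with $f^{r_{N_j}}(P_{N_j + r_{N_j} + k}) = R_k$. Third, by Lemma \ref{first-entry-time} applied to the nest $\sigma^{N_j}(\mathbf P)$ entering $R_k$, we get $\deg(f^{r_{N_j}}|_{P_{N_j + r_{N_j} + k}}) \leq d^{d-1}$, which is a uniform bound $D = d^{d-1}$. Setting $n_j = N_j + r_{N_j}$, these tend to $\infty$ (since $N_j \to \infty$), and $\deg(f^{n_j}|_{P_{n_j + k}}) \leq d^{d-1}$; using $P_{n_j+k} \subset P_{n_j}$ and that the degree over a larger piece is at most the degree over the smaller one times the degree of the cover $P_{n_j} \to f^{?}$, one has to be slightly careful — the cleanest route is to bound $\deg(f^{n_j}|_{P_{n_j}})$ directly, so I would instead observe that the $n_j$ pieces $P_{n_j+k}, f(P_{n_j+k}), \dots, f^{n_j-1}(P_{n_j+k})$ being pairwise disjoint (from Lemma \ref{first-entry-time}, since each of these is, up to a shift, of the form appearing in that lemma's disjointness conclusion) forces $\deg(f^{n_j}|_{P_{n_j}}) \leq d^{d-1}$ as well, giving the bounded degree condition for $\mathbf P$ with constant $D = d^{d-1}$.

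The main obstacle I anticipate is bookkeeping the relationship between "first entry time of a tail of $\mathbf P$ into $R_k$" and "the return times $n_j$ of $\mathbf P$ as a whole", together with ensuring the $n_j$ genuinely go to infinity and the degree bound transfers from the deep puzzle piece $P_{n_j+k}$ to $P_{n_j}$ itself (the definition of the bounded degree condition asks for $\deg(f^{n_k}|_{P_{n_k}}) \leq D$). The resolution of the latter is exactly the disjointness mechanism of Lemma \ref{first-entry-time}: the orbit pieces are pairwise disjoint, so each critical point of $f$ can contribute to the degree at most once along the orbit, capping the total degree by $d^{d-1}$ regardless of which piece in the orbit we track. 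The finiteness of the first entry times for infinitely many starting depths is where membership $\mathbf R \in \omega(\mathbf P)$ with $\mathbf R$ periodic is essential: periodicity guarantees that once the orbit of $\mathbf P$ matches $R_k$ at one moment, it does so infinitely often, so the entry times are finite cofinally.
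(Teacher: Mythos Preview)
Your proposal has a genuine gap in the degree bookkeeping. With $k$ fixed and $r_{N_j}$ the first entry time of the tail $\sigma^{N_j}(\mathbf P)$ into $R_k$, Lemma~\ref{first-entry-time} applied to $\sigma^{N_j}(\mathbf P)$ controls only the last $r_{N_j}$ iterates: it gives disjointness of $f^{N_j}(P_{n_j+k}),\dots,f^{n_j-1}(P_{n_j+k})$ and hence $\deg\bigl(f^{r_{N_j}}\big|_{f^{N_j}(P_{n_j+k})}\bigr)\le d^{d-1}$, where $n_j=N_j+r_{N_j}$. It says nothing about the initial segment $P_{n_j+k},\dots,f^{N_j-1}(P_{n_j+k})$, so neither your intermediate claim $\deg(f^{n_j}|_{P_{n_j+k}})\le d^{d-1}$ nor the disjointness of the full length-$n_j$ orbit follows. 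Indeed, for $j$ large the time $n_j$ is a \emph{later} return of $\mathbf P$ to $R_k$, not the first one, and the disjointness conclusion of Lemma~\ref{first-entry-time} is specific to first entries; the orbit of $P_{n_j+k}$ may pass through the same critical puzzle pieces repeatedly during the first $N_j$ steps, leaving $\deg(f^{n_j}|_{P_{n_j}})$ uncontrolled.

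The paper's argument supplies the missing idea by varying the \emph{target} depth rather than the starting depth. For each large $k$ it takes the first entry time $r_k$ of $\mathbf P$ itself into $Q_k$, so Lemma~\ref{first-entry-time} legitimately bounds $\deg(f^{r_k}|_{P_{r_k+k}})\le d^{d-1}$. One must still iterate roughly $k$ more times to reach depth $0$, and the naive bound $d^k$ blows up --- this is exactly the obstruction your approach hits from the other side. The key extra step exploits the periodicity of $\mathbf Q$: after choosing $N$ so that $f^j(Q_{j+N}\setminus\operatorname{end}(\mathbf Q))$ avoids $\crit(f)$ for each $0\le j<p$, one uses that $\mathbf P$ is wandering to see that $\sigma^{r_k}(\mathbf P)$ must eventually leave the nest $\mathbf Q$ through some annular layer $Q_{k+s_k}\setminus\overline{Q_{k+s_k+1}}$, and the passage from that deep layer down to one of depth between $N$ and $N+p-1$ is \emph{conformal}. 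This collapses the long middle stretch to degree $1$, leaving only a bounded number of final iterates. That conformal-escape mechanism through the periodic annuli is the ingredient your plan lacks.
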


\begin{proof}
The fact $f(\omega(\mathbf{P}))\subset \omega(\mathbf{P})$ implies that $\omega(\mathbf{P})$ contains at least one periodic nest, say $\mathbf{Q}=\{Q_n\}_{n\in\mathbb{N}}$. Let $p$ be the period of $\mathbf{Q}$. 
Choose $N$ large enough so that $f^k(Q_{k+N}\setminus \operatorname{end}(\mathbf{Q}))$ contains no critical point of $f$ for each $0\leq k<p$. Let $A_k=Q_k\setminus\overline{Q_{k+1}}$ for each $k\in \mathbb{N}$ ($A_k$ may not be an annulus). For any $k\geq N$, any puzzle piece $R$ in $A_k$ will be mapped, by some iterate of $f$, into a puzzle piece in $A_N\cup\cdots\cup A_{N+p-1}$ conformally, because the choice of $N$ guarantees that there is no critical point along the orbit of $R$. 

For each $k\geq N$, let $r_k\geq 1$ be the first entry time of $\mathbf{P}$ into $Q_k$. By Lemma \ref{first-entry-time}, the degree of $f^{r_k}:P_{r_k+k}\to Q_k$ is at most $d^{d-1}$. Since $\sigma^{r_k}(\mathbf{P})\neq\mathbf{Q}$, there is a unique $s_k\geq 0$ so that $f^{r_k}(P_{r_k+k+s_k+1})\subset A_{k+s_k}$. Let $t_k$ be the integer so that $N\leq k+s_k-t_k\leq N+p-1$ and $p\mid t_k$, and let $n_k = r_k+k+s_k+1$. Then 
$$ P_{n_k} 
\xrightarrow[\deg\leq d^{d-1}]{f^{r_k}} f^{r_k}(P_{n_k})
\xrightarrow[\deg=1]{f^{t_k}} f^{r_k+t_k}(P_{n_k})
\xrightarrow[\deg\leq d^{N+p}]{f^{n_k-r_k-t_k}} f^{n_k}(P_{n_k}).$$ Therefore $\mathbf{P}$ satisfies the bounded degree condition. 
\end{proof}

\begin{lemma}
\label{BD-2}
Let $\mathbf{P}$ be a wandering nest. If there is $\mathbf{Q}\in\omega(\mathbf{P})$ such that $\omega(\mathbf{Q})\subsetneqq \omega(\mathbf{P})$, then $\mathbf{P}$ satisfies the bounded degree condition. 
\end{lemma}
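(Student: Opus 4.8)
The plan is to mimic the structure of Lemma \ref{BD-1}, replacing the preperiodic nest there by a carefully chosen nest $\mathbf{R}$ whose omega limit set is minimal inside $\omega(\mathbf{P})$ but does not contain $\mathbf{Q}$ (equivalently, does not contain everything in $\omega(\mathbf{P})$). First I would establish that the hypothesis $\omega(\mathbf{Q})\subsetneqq\omega(\mathbf{P})$ produces a \emph{genuine gap}: pick $\mathbf{R}_0\in\omega(\mathbf{P})\setminus\omega(\mathbf{Q})$ and then, using that $\omega(\mathbf{R}_0)\subset\omega(\mathbf{P})$ together with Zorn's lemma (or a direct descending construction on the finitely branching Cantor set $\mathcal{P}$), pass to a nest $\mathbf{R}\in\omega(\mathbf{P})$ that is recurrent with $\omega(\mathbf{R})$ minimal. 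The key point is that $\mathbf{Q}\notin\omega(\mathbf{R})$: since $\omega(\mathbf{R})$ is minimal and $\mathbf{R}\notin\omega(\mathbf{Q})$, one has $\omega(\mathbf{R})\cap\omega(\mathbf{Q})=\emptyset$, so in particular there is a depth $m$ and a puzzle piece $R_m\supset\operatorname{end}(\mathbf{R})$ of depth $m$ such that $f^j(Q_{m'})$ never equals $R_m$ for the periodic nest $\mathbf{Q}=\{Q_n\}$. This separation at a fixed depth is what will bound the number of times an orbit can pass through the "bad" region containing the critical points that accumulate along $\mathbf{Q}$.

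Next I would set up the degree bookkeeping. Choose $N$ large so that, just as in Lemma \ref{BD-1}, the first-return pieces inside the annuli $A_k=Q_k\setminus\overline{Q_{k+1}}$ for $k\ge N$ carry no critical points along their return orbits, and simultaneously so that $R_m\Subset Q_0$-type separation is visible. For a given deep puzzle piece $P_n$ in the nest $\mathbf{P}$, I track the orbit $f(P_n),f^2(P_n),\dots$ and decompose it into excursions. Because $\mathbf{R}$ lies in $\omega(\mathbf{P})$, the orbit of $\mathbf{P}$ returns infinitely often to the fixed puzzle piece $R_m$; between consecutive deep returns to (a deep piece inside) $R_m$ the orbit may wander, but the total degree picked up during one such excursion is controlled: critical points are encountered at most a bounded number of times because (i) the first-entry-time argument of Lemma \ref{first-entry-time} bounds multiplicities on any stretch that ends the first time it hits a given piece, and (ii) the separation $\mathbf{Q}\notin\omega(\mathbf{R})$ means the orbit cannot linger near the $\mathbf{Q}$-critical cluster indefinitely — it must leave $R_m$'s vicinity and it does so boundedly often per excursion relative to the returns to $R_m$. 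Extracting the subsequence $\{n_k\}$ of deep-return times to $R_m$ and composing the excursion estimates then yields a uniform bound $\deg(f^{n_k}|_{P_{n_k}})\le D$.

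The main obstacle I anticipate is making the phrase "the orbit cannot linger near the $\mathbf{Q}$-cluster" quantitatively precise: a priori the orbit of $\mathbf{P}$ could spiral around $\mathbf{Q}$ for longer and longer stretches while still, in the limit, returning to $\mathbf{R}$, and each such stretch could absorb the critical points concentrated along $\mathbf{Q}$ once per period, giving unbounded degree. The resolution has to exploit that the returns to $R_m$ happen at the \emph{same fixed depth} $m$ while approaches to $\mathbf{Q}$ that accumulate $k$ critical hits require going to depth growing with $k$; so between one visit to depth-$m$ piece $R_m$ and the next, the orbit simply has not gone deep enough along $\mathbf{Q}$ to accumulate more than a bounded number of critical encounters. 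Formalizing this "depth budget" — relating the depth needed to see $j$ passes through the critical cluster of the periodic nest $\mathbf{Q}$ to the depth $m$ at which $\mathbf{R}$ is already separated from $\mathbf{Q}$ — is the crux, and I would handle it by the same first-entry-time pairwise-disjointness trick (Lemma \ref{first-entry-time}) applied within each excursion, together with the observation that an excursion which returns to the depth-$m$ piece $R_m$ has, by minimality of $\omega(\mathbf{R})$, bounded "diameter in depth" relative to $\mathbf{Q}$.
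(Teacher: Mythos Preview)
Your proposal has a genuine gap, and the route you outline is both more complicated than needed and does not close.

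First, two concrete errors. You call $\mathbf{Q}$ ``the periodic nest'', but nothing in the hypothesis says $\mathbf{Q}$ is periodic; that was Lemma~\ref{BD-1}, not this one. Second, your minimality step does not give what you claim: starting from $\mathbf{R}_0\in\omega(\mathbf{P})\setminus\omega(\mathbf{Q})$ and descending to a minimal $\sigma$-invariant set inside $\omega(\mathbf{P})$ can very well land you \emph{inside} $\omega(\mathbf{Q})$ (for instance if $\mathbf{R}_0$ is non-recurrent with $\omega(\mathbf{R}_0)\subset\omega(\mathbf{Q})$). So the assertion ``$\mathbf{R}\notin\omega(\mathbf{Q})$'' is unjustified, and with it the claimed disjointness $\omega(\mathbf{R})\cap\omega(\mathbf{Q})=\emptyset$.

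More importantly, the ``excursion/depth budget'' strategy is where the real difficulty lies, and you do not resolve it. Between consecutive visits of the $\mathbf{P}$-orbit to a fixed-depth piece $R_m$, there is no a priori bound on how many critical points are hit; the first-entry disjointness (Lemma~\ref{first-entry-time}) controls a single first-entry segment, not an arbitrary orbit segment between returns. The paper's proof avoids this problem entirely by a different, much shorter mechanism that you should adopt: pick any $\mathbf{R}\in\omega(\mathbf{P})\setminus\omega(\mathbf{Q})$ (no minimality needed), fix a depth $L$ with $f^k(Q'_{k+L})\cap R_L=\emptyset$ for all $k\ge 0$ where $\mathbf{Q}'=\sigma^N(\mathbf{Q})$, and for each $k$ compose \emph{two} first-entry maps: first the entry of $\mathbf{P}$ into $Q'_k$ (time $r_k$), then the entry of $\sigma^{r_k}(\mathbf{P})$ into $R_L$ (time $s_k$). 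The separation condition forces the depth-$(s_k+L)$ piece $S:=f^{r_k}(P_{r_k+s_k+L})$ to be strictly contained in $Q'_k$; hence $P_{r_k+s_k+L}\subset P_{r_k+k}$, and the degree of $f^{r_k}$ on $P_{r_k+s_k+L}$ is bounded by the first-entry bound into $Q'_k$. Composing with the second first-entry bound and the final $f^L$ gives a uniform bound at times $n_k=r_k+s_k+L\to\infty$. The crucial idea you are missing is this nesting $S\subsetneqq Q'_k$, which lets one first-entry bound control the pullback of the \emph{other} first-entry piece; there is no need to track excursions or invoke minimality.
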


\begin{proof}
Let $\mathbf{R}\in \omega(\mathbf{P})\setminus \omega(\mathbf{Q})$. 
By $\mathbf{R}\notin \omega(\mathbf{Q})$, we can choose $L$ and $N$ so that $f^k(Q_{k+L})\cap R_L=\emptyset$ for any $k\geq N$. Let $\mathbf{Q}'=\sigma^N(\mathbf{Q})$. Then $f^k(Q'_{k+L})\cap R_L=\emptyset$ for any $k\geq 0$. 
Now fix $k\geq 0$. 
Since $\mathbf{Q}'\in\omega(\mathbf{P})$, there is a minimal $r_k\geq 1$ such that $f^{r_k}(P_{r_k+k})=Q'_k$. 
Since $\mathbf{R}\in\omega(\sigma^{r_k}(\mathbf{P}))$, there is a minimal $s_k\geq 1$ such that $f^{s_k}(f^{r_k}(P_{r_k+s_k+L}))=R_L$. Let $S = f^{r_k}(P_{r_k+s_k+L})$, which is a puzzle piece of depth $s_k+L$.  
By Lemma \ref{first-entry-time}, both $\deg(f^{r_k}|_{P_{r_k+k}})$ and $\deg(f^{s_k}|_S)$ are bounded above by $d^{d-1}$. See Figure \ref{fig-first-entries}. 

\begin{figure}[ht]
\centering
\includegraphics{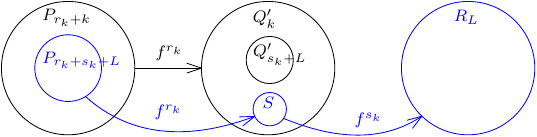}
\caption{First entries of $\mathbf{P}$ into $Q'_k$ (black) and $\sigma^{r_k}(\mathbf{P})$ into $R_L$ (blue)}
\label{fig-first-entries}
\end{figure}
 
Since $P_{r_k+s_k+L}\cap P_{r_k+k}\neq\emptyset$, we have  $S\cap Q'_k\neq\emptyset$. 
If $Q'_k\subset S$, then $S=Q'_{s_k+L}$, which would imply $f^{s_k}(Q'_{s_k+L}) = R_L$. This is a contradiction. Hence, $S\subsetneqq Q'_k$ and $S\cap Q'_{s_k+L}=\emptyset$. 
Then $\deg(f^{r_k}|_{P_{r_k+s_k+L} })\leq \deg(f^{r_k}|_{P_{r_k+k}})\leq d^{d-1}$. 
Let $n_k = r_k+s_k+L$. Since $s_k+L> k$, we have $n_k\to\infty$ as $k\to\infty$. 
Finally, $\deg(f^{n_k}:P_{n_k}\to f^L(R_L))\leq d^{2(d-1)+L}$ implies that $\mathbf{P}$ has the bounded degree condition.
\end{proof}


Let
$$\mathcal{P}_{\rm c} = \{\{P_n\}_{n\in\mathbb{N}}\in\mathcal{P}\mid \text{$P_n\cap \crit(f)\neq\emptyset$ for any $n\in\mathbb{N}$}\}.$$
Each nest in $\mathcal{P}_{\rm c}$ is called \emph{critical}. 

\begin{lemma}
\label{BD-non-critical}
Let $\mathbf{P}$ be a wandering nest. 
If $\omega(\mathbf{P})$ contains no critical nest, 
then $\mathbf{P}$ satisfies the bounded degree condition.  
\end{lemma}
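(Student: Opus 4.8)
The plan is to exploit the hypothesis that $\omega(\mathbf{P})$ contains no critical nest in order to produce, along a subsequence of depths, a uniform bound on the number of times a critical point can appear in the relevant orbit segments. Concretely, since $\mathcal{P}_{\rm c}$ is a closed subset of $\mathcal{P}$ and $\omega(\mathbf{P})$ is compact and disjoint from $\mathcal{P}_{\rm c}$, there is an open neighborhood $\mathcal{U}$ of $\omega(\mathbf{P})$ in $\mathcal{P}$ with $\mathcal{U}\cap\mathcal{P}_{\rm c}=\emptyset$. By definition of the omega limit set, $\sigma^k(\mathbf{P})\in\mathcal{U}$ for all $k$ sufficiently large, say $k\geq k_0$. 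Translating the membership in $\mathcal{U}$ back into the puzzle picture, there is a fixed depth $M$ such that for every $k\geq k_0$ the puzzle piece $f^k(P_{k+M})$ contains no critical point of $f$; that is, the only depths at which the forward orbit of a deep piece can meet $\crit(f)$ are the first $M$ iterates and whatever happens before time $k_0$.

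Granting this, the argument is a counting argument on which iterates $f^j(P_n)$, for $0\le j\le n$, can contain a critical point. Fix a large $n$ and set $n_k = n$; I would split the orbit $P_n, f(P_n),\dots, f^n(P_n)$ into the initial block of length $k_0$ and the remaining block. In the initial block each critical point contributes at most once to the degree (as in Lemma \ref{first-entry-time}), so that block multiplies the degree by at most $d^{d-1}$ crudely — actually one needs the pairwise-disjointness trick of Lemma \ref{first-entry-time} to be careful, but since $k_0$ is a fixed constant the contribution of the first $k_0$ iterates is bounded by $d^{k_0}$ in any case. For the remaining block, whenever $f^j(P_n)$ with $j\ge k_0$ contains a critical point, the piece $f^j(P_n)$ must have depth $< M$, i.e.\ $n-j < M$, so $j > n-M$; thus critical points can only occur in the last $M$ iterates of the orbit, contributing at most another factor $d^{M}$. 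Hence $\deg(f^n|_{P_n})\le d^{k_0+M}$, a bound independent of $n$, and taking $n_k\to\infty$ gives the bounded degree condition with constant $D=d^{k_0+M}$.

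The main obstacle — and the point requiring the most care — is making precise the passage from ``$\sigma^k(\mathbf{P})\in\mathcal{U}$'' to ``$f^k(P_{k+M})$ is critical-point-free'', uniformly in $k$. The topology on $\mathcal{P}$ is the product (Cantor) topology, so a basic open neighborhood of a nest $\mathbf{Q}$ only fixes finitely many of its coordinates; a single such neighborhood need not certify that all deep enough pieces of nearby nests avoid $\crit(f)$. The correct approach is to use compactness of $\omega(\mathbf{P})$: for each $\mathbf{Q}\in\omega(\mathbf{P})$, since $\mathbf{Q}\notin\mathcal{P}_{\rm c}$ there is a depth $m(\mathbf{Q})$ with $Q_{m(\mathbf{Q})}\cap\crit(f)=\emptyset$, and the cylinder set determined by the first $m(\mathbf{Q})$ coordinates of $\mathbf{Q}$ is an open neighborhood on which this persists; finitely many such cylinders cover $\omega(\mathbf{P})$, and $M$ is taken to be the maximum of the finitely many $m(\mathbf{Q})$'s. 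Once this uniform $M$ is in hand, one must also check that for $k$ large the piece $f^k(P_{k+M})$ is literally one of these critical-point-free pieces, which follows because $\sigma^k(\mathbf{P})$ eventually lies in the union of the chosen cylinders and the cylinder membership pins down $f^k(P_{k+M})$ as the $M$-th coordinate. After that the degree count is entirely routine, using only that each critical point of $f$ can be ``used up'' once along a segment of distinct pieces, exactly as in Lemma \ref{first-entry-time}.
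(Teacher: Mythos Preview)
Your proof is correct and follows essentially the same approach as the paper: both arguments use the disjointness of $\omega(\mathbf{P})$ from the closed set $\mathcal{P}_{\rm c}$ to find a uniform depth (your $M$, the paper's $L$) and a threshold (your $k_0$, the paper's $N$) such that $f^k(P_{k+M})$ avoids $\crit(f)$ for all $k\ge k_0$, and then split the orbit of $P_n$ into an initial segment of fixed length, a middle segment on which $f$ has degree $1$, and a tail of fixed length. Your compactness discussion is a spelled-out version of what the paper asserts in one line, and your degree bound $d^{k_0+M}$ matches the paper's $d^{N+L}$.
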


\begin{proof} 
By $\omega(\mathbf{P})\cap \mathcal{P}_{\rm c}=\emptyset$, 
there are $L,N\in\mathbb{N}$ such that $f^n(P_{n+L})\cap C_L=\emptyset$ for any $n\geq N$ and any $\mathbf{C}\in\mathcal P_{\rm c}$. 
Increasing the depth $L$ if necessary, we may assume every puzzle piece of depth $L$ outside $\bigcup_{\mathbf{C}\in\mathcal P_{\rm c}}C_L$ contains no critical point. 
Then for each $n\geq N$, we have 
$$ P_{n+L} 
\xrightarrow[\deg\leq d^N]{f^N} f^N(P_{n+L})
\xrightarrow[\deg=1]{f^{n-N}} f^n(P_{n+L})
\xrightarrow[\deg\leq d^L]{f^L} f^{n+L}(P_{n+L})
.$$ 
Here the middle part follows from that $f^{N+k}(P_{n+L})$ contains no critical point for each $0\leq k\leq n-N$, since $f^{N+k}(P_{n+L})\subset f^{N+k}(P_{N+k+L})$. 
Therefore $\mathbf{P}$ satisfies the bounded degree condition. 
\end{proof}


\begin{assumption}
\label{asp-n0}
Take $n_0$ large enough satisfying the following properties. 
\begin{enumerate}
\item $f^{-n_0}(\Omega\setminus\Gamma)\cap\crit(f)\subset \bigcup_{\mathbf{C}\in\mathcal P_{\rm c}}\operatorname{end}(\mathbf{C})$, where $f^{-n_0}(\Omega\setminus\Gamma)$ is the union of all puzzle pieces of depth $n_0$. 

\item For any different critical nest $\mathbf{C}$ and $ \mathbf{C}'$, we have $C_{n_0}\cap C'_{n_0} = \emptyset$. 

\item Let $\mathbf{C},\mathbf{C}'\in\mathcal{P}_{\rm c}$. If $\mathbf{C}'\notin \omega(\mathbf{C})$, then $f^k(C_{k+n_0})\cap  C'_{n_0}=\emptyset$ for any $k\geq0$ except at most one $k$ with $\sigma^k(\mathbf{C}) = \mathbf{C}'$.

\item For any $\mathbf{C}\in\mathcal{P}_{\rm c}\cap \mathcal P^*$, we have $C_{n_0}\Subset C_0$.  
\end{enumerate}
\end{assumption}

Let $\mathbf{C}$ be a critical nest. For $n\geq n_0$ and $k\geq1$, we call $C_{n+k}$ a \emph{successor} of $C_n$ if $f^k(C_{n+k})=C_n$ and each critical nest appears at most twice along the orbit $C_{n+k}, f(C_{n+k}),\dots, f^k(C_{n+k})=C_n$. It is clear that the degree of $f^k:C_{n+k}\to C_n$ is at most $d^{2d-1}$. 

\begin{lemma}
\label{BD-reluctant}
Let $\mathbf{P}$ be a wandering nest. 
If there is a critical nest $\mathbf{C}$ in $\omega(\mathbf{P})$ and an $n\geq n_0$ such that $C_n$ has infinitely many successors, then $\mathbf{P}$ satisfies the bounded degree condition. 
\end{lemma}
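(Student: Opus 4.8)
The plan is to use, for each successor $C_{n+k}$ of $C_n$, the first-entry-time estimate to locate a depth at which $\mathbf{P}$ lands on $C_{n+k}$ with degree at most $d^{d-1}$, then to compose with the successor map $C_{n+k}\to C_n$ and with the fixed map $C_n\to f^n(C_n)$, and finally to let $k\to\infty$.

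Write $\mathbf{C}=\{C_m\}_{m\in\mathbb{N}}$ for the critical nest in $\omega(\mathbf{P})$ and keep $n\geq n_0$ as in the hypothesis, so that $\mathcal{K}:=\{k\geq 1\mid C_{n+k}\text{ is a successor of }C_n\}$ is infinite. Fix $k\in\mathcal{K}$. Unwinding the definition of $\omega(\mathbf{P})$, there are infinitely many $j$ with $f^j(P_{j+n+k})=C_{n+k}$, so the first entry time $r_k$ of $\mathbf{P}$ into the puzzle piece $C_{n+k}$ (of depth $n+k$) is finite. By Lemma \ref{first-entry-time}, $f^{r_k}(P_{r_k+n+k})=C_{n+k}$ and $\deg(f^{r_k}|_{P_{r_k+n+k}})\leq d^{d-1}$. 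Since $C_{n+k}$ is a successor of $C_n$, we have $f^k(C_{n+k})=C_n$ with $\deg(f^k|_{C_{n+k}})\leq d^{2d-1}$; moreover $f^n(C_n)$ is a puzzle piece of depth $0$, with $\deg(f^n|_{C_n})\leq d^n$.

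Put $n_k:=r_k+k+n$, so $P_{n_k}=P_{r_k+n+k}$ has depth exactly $n_k$. Composing the three stages
\[
P_{n_k}\ \xrightarrow{\ f^{r_k}\ }\ C_{n+k}\ \xrightarrow{\ f^{k}\ }\ C_n\ \xrightarrow{\ f^{n}\ }\ f^n(C_n),
\]
we obtain $f^{n_k}(P_{n_k})=f^n(C_n)$, a puzzle piece of depth $0$, and
\[
\deg(f^{n_k}|_{P_{n_k}})\ \leq\ d^{d-1}\cdot d^{2d-1}\cdot d^{n}\ =\ d^{\,3d-2+n}\ =:\ D,
\]
a constant because $n$ is fixed. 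Since $n_k\geq k+n$, letting $k$ run through $\mathcal{K}$ gives $n_k\to\infty$, and hence $\mathbf{P}$ satisfies the bounded degree condition.

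I do not expect a genuine obstacle here: the statement is just the composition of two uniform degree bounds already available — the first-entry-time bound $d^{d-1}$ of Lemma \ref{first-entry-time} applied at depth $n+k$, and the successor bound $d^{2d-1}$ recorded just before the lemma — together with the fixed cost $d^n$ of descending from depth $n$ to depth $0$. What needs a little care is the bookkeeping: that $f^{r_k}$ maps $P_{r_k+n+k}$ \emph{exactly onto} $C_{n+k}$ (immediate from the definition of the first entry time at depth $n+k$), that the total number of iterates carrying $P_{n_k}$ down to depth $0$ is exactly $n_k=r_k+k+n$, and that $r_k<\infty$ — which is precisely where the hypothesis $\mathbf{C}\in\omega(\mathbf{P})$ enters, forcing $\mathbf{P}$ to meet $C_{n+k}$ infinitely often, and hence at least once.
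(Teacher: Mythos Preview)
Your proof is correct and follows essentially the same approach as the paper's: use the first-entry-time bound $d^{d-1}$ to reach each successor $C_{n+k}$, the successor bound $d^{2d-1}$ to drop to $C_n$, and the fixed cost $d^n$ to reach depth $0$. The only difference is notation --- the paper indexes the successors by their depth rather than by the gap $k$, and calls the first entry time $s_k$ rather than $r_k$ --- but the argument is identical.
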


\begin{proof} 
Let $\{C_{r_k}\}_{k\geq 1}$ be all successors of $C_n$ with $n<r_1<r_2<\cdots$. Since $\mathbf{C}\in \omega(\mathbf{P})$, there is a minimal $s_k\geq 1$ such that $f^{s_k}(P_{s_k+r_k}) = C_{r_k}$ for each $k\geq 1$. 
Then 
$$ P_{s_k+r_k} 
\xrightarrow[\deg\leq d^{d-1}]{f^{s_k}} C_{r_k}
\xrightarrow[\deg\leq d^{2d-1}]{f^{r_k-n}} C_n
\xrightarrow[\deg\leq d^n]{f^n} f^n(C_n)
.$$ 
Here the first inequality follows from Lemma \ref{first-entry-time}, and the middle one follows from the definition of successors. This shows $\mathbf{P}$ satisfies the bounded degree condition. 
\end{proof}

\subsection{The persistent recurrence case}
Contrary to the conditions in Lemmas \ref{BD-1}, \ref{BD-2}, \ref{BD-non-critical} and \ref{BD-reluctant}, we have the following condition. 

\begin{definition}
A wandering nest $\mathbf{P}$ is said to satisfy the \emph{persistent recurrence condition} if the following properties hold. 
\begin{itemize}
\item $\omega(\mathbf{P})\subset\mathcal P_{\rm w}$. 

\item $\omega(\mathbf{P})\cap \mathcal P_{\rm c}\neq\emptyset$. 

\item $\mathbf{C}'\in\omega(\mathbf{C})$ and $\mathbf{C}\in\omega(\mathbf{C}')$ for any $\mathbf{C}, \mathbf{C}'\in \omega(\mathbf{P})\cap \mathcal P_{\rm c}$. 

\item $C_n$ has only finitely many successors for any $\mathbf{C}\in \omega(\mathbf{P})\cap \mathcal P_{\rm c}$ and $n\geq n_0$. 
\end{itemize}
Each critical nest in $\omega(\mathbf{P})\cap \mathcal P_{\rm c}$ is called \emph{persistently recurrent}. 
\end{definition}

Let $\mathbf{C}$ be a persistently recurrent critical nest. Starting with $C_{n_0}$, we can inductively define $C_{\widetilde n_j} = \mathcal{A}\mathcal{A}\boldsymbol{\Gamma}^\tau(C_{n_{j-1}})$ and $C_{n_j} = \mathcal{B}\mathcal{A}\boldsymbol{\Gamma}^\tau(C_{n_{j-1}})$, where $\mathcal{A},\mathcal{B}, \boldsymbol{\Gamma}$ are operators producing deeper critical puzzle pieces introduced in \cite{KSS}, and $\tau$ is a constant. Let $k_j>0$ denote the depth gap between $ \mathcal{A}\boldsymbol{\Gamma}^\tau (C_{n_{j-1}})$ and $\mathcal{B}\boldsymbol{\Gamma}^\tau(C_{n_{j-1}})$. Let $n'_j = n_j-k_j$. 
We call such subnest of $\mathbf{C}$ a \emph{principal nest}, whose significant properties are summarized as follows. 

\begin{theorem} 
\label{p-nest} 
Let $\mathbf{P}$ be a wandering nest with the persistent recurrence condition, and let $\mathbf{C}\in \omega(\mathbf{P})\cap \mathcal P_{\rm c}$. 
Then there are constants $D\geq2$, $m>0$ and $j_0\geq1$ independent of $\mathbf{P}$ and $\mathbf{C}$ such that the subnest 
$$C_{n_0}
\supset
C_{n'_1}\supset C_{n_1}\supset C_{\widetilde n_1}
\supset
C_{n'_2}\supset C_{n_2}\supset C_{\widetilde n_2}
\supset
\cdots$$ 
satisfies the following properties. 
\begin{enumerate}
\item For any $j\geq1$,
$$\deg(f^{n'_j-n_{j-1}}|_{C_{n'_j}}),\ 
\deg(f^{n_j-n_{j-1}}|_{C_{n_j}}),\ 
\deg(f^{\widetilde n_j-n_{j-1}}|_{C_{\widetilde n_j}})
\leq D.$$
 
\item 
\label{gap}
The gaps $\widetilde n_j - n_j$ and $n_j-n'_j$ both tend to infinity as $j\to\infty$. 

\item 
\label{no-postcrit}
For all $j\geq 1$, $\mathbf{C}'\in\omega(\mathbf{P})\cap \mathcal P_{\rm c}$ and $k\in\mathbb{N}$, we have $$(C_{n'_j}\setminus \overline{C_{\widetilde n_j}} )\cap f^k(C'_{k+\widetilde n_j})=\emptyset.$$ 
 
\item 
\label{complex-bound}
(Complex bound)
For each $j\geq j_0$, 
$$\operatorname{mod}(C_{n'_j}\setminus \overline{C_{n_j}}),\ 
\operatorname{mod}(C_{n_j}\setminus \overline{C_{\widetilde n_j}})\geq m.$$
\end{enumerate}
\end{theorem}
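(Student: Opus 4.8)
## Proof Proposal

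\textbf{Overall strategy.} The plan is to realize the principal nest as the output of the Kozlovski--Shen--van Strien machinery applied to the critical nest $\mathbf{C}$, and then translate each of the four conclusions into the corresponding statement from \cite{KSS}. The operators $\mathcal A$, $\mathcal B$, $\boldsymbol\Gamma$ act on critical puzzle pieces: $\boldsymbol\Gamma$ is the (modified) pullback to the first return of the critical orbit, $\mathcal A$ selects the smallest critical puzzle piece containing $\crit(f)$ on which the first-return map has controlled degree, and $\mathcal B$ its child; the constant $\tau$ fixes how many $\boldsymbol\Gamma$-steps precede an $\mathcal A\mathcal A$/$\mathcal B\mathcal A$ step. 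The persistent recurrence condition is exactly what is needed for these operators to be well-defined on the whole forward sequence, since it guarantees (i) no critical nest escapes to a preperiodic or transient regime, (ii) all persistently recurrent critical nests shadow each other ($\mathbf C'\in\omega(\mathbf C)$ and $\mathbf C\in\omega(\mathbf C')$), and (iii) finitely many successors of each $C_n$, which rules out the "reluctantly recurrent" degeneracies handled in Lemmas \ref{BD-1}--\ref{BD-reluctant}. So the first step is to check that under the persistent recurrence condition the definitions of $C_{\widetilde n_j}$, $C_{n_j}$, $C_{n'_j}$ make sense for all $j\ge1$ and that the resulting sequence is genuinely nested and strictly deepening.

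\textbf{Steps (1)--(3).} For (1), the bounded-degree claim $\deg(f^{n'_j-n_{j-1}}|_{C_{n'_j}})$, $\deg(f^{n_j-n_{j-1}}|_{C_{n_j}})$, $\deg(f^{\widetilde n_j-n_{j-1}}|_{C_{\widetilde n_j}})\le D$ is built into the construction of $\mathcal A$ and $\mathcal B$: the $\mathcal A$-operator by definition produces a child on which the first return has degree bounded by a constant depending only on $d$ (via the standard counting: at most $d-1$ critical points, each visited a bounded number of times because successors are finite), and one $\boldsymbol\Gamma$-step raises the degree by at most a factor $d^{d-1}$ (Lemma \ref{first-entry-time}), so $\tau$ iterations give a bound depending only on $d$ and $\tau$. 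Concatenating the $\mathcal B\mathcal A\boldsymbol\Gamma^\tau$-block from level $n_{j-1}$ then gives a uniform $D=D(d,\tau)\ge2$. For (2), the gaps $\widetilde n_j-n_j$ and $n_j-n'_j$ are the depth increments contributed by the last $\mathcal A$-to-child and the $\mathcal B$-to-$\mathcal A$ passages respectively; wandering of $\mathbf C$ (which follows from $\omega(\mathbf P)\subset\mathcal P_{\rm w}$ and $\mathbf C\in\omega(\mathbf P)$) forces the first return times into ever-smaller puzzle pieces to blow up, hence these gaps $\to\infty$. For (3), the "no postcritical point in the annular region" statement is precisely the defining property of the $\mathcal A$-operator: $C_{\widetilde n_j}=\mathcal A\mathcal A\boldsymbol\Gamma^\tau(C_{n_{j-1}})$ is chosen so that the annulus $C_{n'_j}\setminus\overline{C_{\widetilde n_j}}$ is disjoint from the forward orbit of every critical puzzle piece of depth $\ge\widetilde n_j$; one then has to observe that "every critical puzzle piece" here means every $\mathbf C'\in\omega(\mathbf P)\cap\mathcal P_{\rm c}$, which is legitimate because outside $\omega(\mathbf P)$ the critical orbit eventually leaves a neighborhood (Assumption \ref{asp-n0}(3)) and $n_0$ was chosen to absorb the finitely many exceptional returns.

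\textbf{Step (4): the complex bounds --- the main obstacle.} This is the heart of the matter and the step I expect to be hardest. The inequalities $\operatorname{mod}(C_{n'_j}\setminus\overline{C_{n_j}})$, $\operatorname{mod}(C_{n_j}\setminus\overline{C_{\widetilde n_j}})\ge m$ for $j\ge j_0$ are the \cite{KSS} a priori bounds. The argument there runs: by (3) the relevant return maps to the critical piece are "unbranched" in the appropriate sense, so one can apply the Kahn--Lyubich Covering Lemma (or the KSS enhanced-nest version) to get a definite modulus lower bound that does not degenerate along the sequence; this uses (1) to control the degree entering the Covering Lemma, (2) to guarantee that the nest is "deep enough" after $j_0$ steps that the small-distortion regime applies, and (3) to guarantee the relevant annuli are genuinely clean of postcritical points so the pullback is a covering of bounded degree. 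The delicate point is that all constants $D$, $m$, $j_0$ must be uniform over $\mathbf P$ and $\mathbf C$ --- this is where one must be careful that the $\mathcal A$, $\mathcal B$, $\boldsymbol\Gamma$ operators and the constant $\tau$ were set up once and for all depending only on $d$, and that the puzzle constructed in \S\ref{sect-puzzle} has only finitely many pieces of depth $n_0$, so there are only finitely many "seeds" $C_{n_0}$ to worry about. The cleanest route is to cite the relevant proposition of \cite{KSS} (their real-analytic box-mapping setting transfers verbatim once one checks our return maps are complex box mappings with the unbranched property (3)), verifying only that our puzzle-piece setup meets their hypotheses; I would flag that the persistent recurrence condition is exactly the translation of their standing assumption, so the import is essentially mechanical but must be stated carefully.
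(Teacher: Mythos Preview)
Your proposal is correct and aligned with the paper's own treatment: the paper does not give an independent proof of Theorem~\ref{p-nest} but simply attributes the construction and the complex bounds to \cite{KSS}, \cite{KL-lc,KL-lem}, \cite{KS}, \cite{QY}, with the extra piece $C_{\widetilde n_j}$ and the bound on $\operatorname{mod}(C_{n_j}\setminus\overline{C_{\widetilde n_j}})$ credited to \cite{YZ} (see also \cite[\S4.6]{PQRTY}), and refers the reader to those sources for details. Your sketch is in fact more explicit than the paper's discussion; the only point the paper adds that you might incorporate is the remark that the annuli $C_{n'_j}\setminus\overline{C_{n_j}}$ may be degenerate for small $j$, and it is $C_{n_0}\Subset C_0$ together with the growth of the gaps $n_j-n'_j$ that forces nondegeneracy for $j\ge j_0$.
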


The construction of the principal nest is attributed to Kahn-Lyubich \cite{KL-lc} in the unicritical case, Kozlovski-Shen-van Strien \cite{KSS} in the multicritical case. The complex bounds are proven by Kahn-Lyubich \cite{KL-lc, KL-lem} (unicritical case), Kozlovski-van Strien \cite{KS} and Qiu-Yin \cite{QY} independently (multicritical case). 
The construction of $C_{\widetilde n_j}$ and the estimate of $\operatorname{mod}(C_{n_j}\setminus \overline{C_{\widetilde n_j}})$ appeared in \cite{YZ} for the first time; see also \cite[\S4.6]{PQRTY}. 
The interested readers may see these references for a detailed construction of the nest and the proof of its properties. 

We remark that in our setting, the annuli $C_{n'_j}\setminus \overline{C_{n_j}}$ might be degenerate for the first few $j$'s. But because of $C_{n_0}\Subset C_0$ and the growth of the gaps $n_j-n'_j$, the annuli $C_{n'_j}\setminus \overline{C_{n_j}}$ will be nondegenerate when $j$ is large enough. 

\begin{lemma}
\label{persistent-end}
Let $\mathbf{P}$ be a wandering nest. 
If $\mathbf{P}$ satisfies the persistent recurrence condition, then $\operatorname{end}(\mathbf{P})$ is a singleton. 
\end{lemma}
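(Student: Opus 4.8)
The plan is to transport the principal nest of a persistently recurrent critical nest inside $\omega(\mathbf{P})$ back to $\mathbf{P}$ along first--entry times, thereby producing infinitely many round annuli of a uniform definite modulus around $\operatorname{end}(\mathbf{P})$, and then to conclude by Gr\"otzsch's inequality. Using the persistent recurrence condition, fix a persistently recurrent critical nest $\mathbf{C}=\{C_n\}_{n\in\mathbb{N}}\in\omega(\mathbf{P})\cap\mathcal{P}_{\rm c}$ and apply Theorem~\ref{p-nest} to $\mathbf{P}$ and $\mathbf{C}$; this supplies the principal subnest $C_{n_0}\supset C_{n'_1}\supset C_{n_1}\supset C_{\widetilde n_1}\supset C_{n'_2}\supset\cdots$ together with its uniform degree bound $D$, the growth of the gaps (Theorem~\ref{p-nest}(\ref{gap})), the absence of postcritical pieces across the gaps (Theorem~\ref{p-nest}(\ref{no-postcrit})), and the complex bounds $\operatorname{mod}(C_{n'_j}\setminus\overline{C_{n_j}}),\ \operatorname{mod}(C_{n_j}\setminus\overline{C_{\widetilde n_j}})\ge m$ for $j\ge j_0$. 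Summing these moduli along the essentially disjoint annuli and using Gr\"otzsch already gives $\operatorname{mod}(C_{n'_{j_0}}\setminus\operatorname{end}(\mathbf{C}))=\infty$, so $\operatorname{end}(\mathbf{C})$ is a singleton; the same reasoning applies to every persistently recurrent critical nest, a fact to be used in the degree estimate below.

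Next, for each large $j$ let $\tau_j\ge 1$ be the first entry time of $\mathbf{P}$ into the deep critical piece $C_{\widetilde n_j}$; it is finite because $\mathbf{C}\in\omega(\mathbf{P})$. Thus $f^{\tau_j}(P_{\tau_j+\widetilde n_j})=C_{\widetilde n_j}$, and since $C_{\widetilde n_j}\subset C_{n_j}\subset C_{n'_j}$ are nested puzzle pieces the same iterate also satisfies $f^{\tau_j}(P_{\tau_j+n_j})=C_{n_j}$ and $f^{\tau_j}(P_{\tau_j+n'_j})=C_{n'_j}$. The crucial step is the uniform bound $\deg(f^{\tau_j}\colon P_{\tau_j+n'_j}\to C_{n'_j})\le d^{d-1}$. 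By Lemma~\ref{first-entry-time} the first--entry map $f^{\tau_j}\colon P_{\tau_j+\widetilde n_j}\to C_{\widetilde n_j}$ already has degree at most $d^{d-1}$, so it remains to check that enlarging the target from $C_{\widetilde n_j}$ to $C_{n'_j}$ introduces no extra branching, i.e. that the closed annulus $C_{n'_j}\setminus\overline{C_{\widetilde n_j}}$ carries no critical value of $f^{\tau_j}$ and that its $f^{\tau_j}$--preimage inside $P_{\tau_j+n'_j}$ is precisely the single annulus $P_{\tau_j+n'_j}\setminus\overline{P_{\tau_j+\widetilde n_j}}$. This is exactly what Theorem~\ref{p-nest}(\ref{no-postcrit}) is designed to provide: any critical point met by the forward orbit of $P_{\tau_j+n'_j}$ lies in the end of a critical nest (Assumption~\ref{asp-n0}(1)), hence in every sufficiently deep puzzle piece around it, so after the appropriate iterate it is trapped inside a forward image of a critical piece of depth $\widetilde n_j$, which by item~(\ref{no-postcrit}) is disjoint from $C_{n'_j}\setminus\overline{C_{\widetilde n_j}}$. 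Consequently $f^{\tau_j}\colon P_{\tau_j+n'_j}\setminus\overline{P_{\tau_j+\widetilde n_j}}\to C_{n'_j}\setminus\overline{C_{\widetilde n_j}}$ is an unbranched covering of degree at most $d^{d-1}$, whence, using the complex bounds,
\[
\operatorname{mod}\!\bigl(P_{\tau_j+n'_j}\setminus\overline{P_{\tau_j+\widetilde n_j}}\bigr)\ \ge\ \operatorname{mod}\!\bigl(C_{n'_j}\setminus\overline{C_{\widetilde n_j}}\bigr)\big/ d^{d-1}\ \ge\ 2m\big/ d^{d-1}\ =:\ \delta\ >\ 0
\]
for all $j\ge j_0$.

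Finally, assemble these annuli. Since $C_{\widetilde n_j}\supset C_{n'_{j+1}}$ in the principal nest we have $\widetilde n_j\le n'_{j+1}$, and first--entry times into nested pieces are nondecreasing (immediate from nestedness of puzzle pieces), so $\tau_j\le\tau_{j+1}$ and hence $\tau_j+\widetilde n_j\le\tau_{j+1}+n'_{j+1}$. Therefore the annuli $A_j:=P_{\tau_j+n'_j}\setminus\overline{P_{\tau_j+\widetilde n_j}}$, $j\ge j_0$, are nested with pairwise disjoint interiors and each separates $\operatorname{end}(\mathbf{P})$ from $\partial P_{\tau_{j_0}+n'_{j_0}}$. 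Because $\mathbf{P}$ is wandering, Lemma~\ref{nest-degenerate} gives $\mathbf{P}\in\mathcal{P}^*$, so $\operatorname{end}(\mathbf{P})\Subset P_n$ for every $n$ and $P_{\tau_{j_0}+n'_{j_0}}\setminus\operatorname{end}(\mathbf{P})$ is a nondegenerate annulus containing all the $A_j$ as disjoint essential subannuli. Gr\"otzsch's inequality then yields $\operatorname{mod}(P_{\tau_{j_0}+n'_{j_0}}\setminus\operatorname{end}(\mathbf{P}))\ge\sum_{j\ge j_0}\operatorname{mod}(A_j)=\infty$, and a full continuum admitting an annular neighbourhood of infinite modulus is a single point; thus $\operatorname{end}(\mathbf{P})$ is a singleton.

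I expect the main obstacle to be the uniform degree estimate of the second paragraph --- showing that the pulled--back annulus is unbranched with a bound independent of $j$. This is where the delicate combinatorics of the principal nest (Theorem~\ref{p-nest}(\ref{no-postcrit})) and the normalizations of Assumption~\ref{asp-n0} must be combined with care; the surrounding steps (complex bounds $\Rightarrow$ point end, the nesting of the $A_j$, and the Gr\"otzsch summation) are essentially formal.
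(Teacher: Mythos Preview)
Your overall strategy---pull back the principal nest to $\mathbf{P}$ along first-entry times, get annuli of uniformly bounded modulus, and sum via Gr\"otzsch---is exactly the paper's approach, and your nesting and summation steps are fine. The genuine gap is in your degree estimate. Theorem~\ref{p-nest}(\ref{no-postcrit}) controls only those critical nests $\mathbf{C}'\in\omega(\mathbf{P})\cap\mathcal{P}_{\rm c}$; it says nothing about critical nests $\mathbf{C}''\in\mathcal{P}_{\rm c}\setminus\omega(\mathbf{P})$. So when you write ``any critical point met by the forward orbit of $P_{\tau_j+n'_j}$ \ldots\ is trapped inside a forward image of a critical piece of depth $\widetilde n_j$, which by item~(\ref{no-postcrit}) is disjoint from $C_{n'_j}\setminus\overline{C_{\widetilde n_j}}$'', you are applying item~(\ref{no-postcrit}) to critical nests that may lie outside $\omega(\mathbf{P})$, and for those it simply does not apply. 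Such a critical nest $\mathbf{C}''$ could a priori contribute extra branching over the annulus and destroy the uniform bound.

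The fix is to first choose a threshold $j_1=j_1(\mathbf{P})\ge j_0$ large enough so that, at depth $n'_{j_1}$, the forward orbit of $\mathbf{P}$ avoids every critical piece $C''_{n'_{j_1}}$ with $\mathbf{C}''\notin\omega(\mathbf{P})$ (this is possible precisely because $\mathbf{C}''\notin\omega(\mathbf{P})$, with at most one exceptional time where $\sigma^k(\mathbf{P})=\mathbf{C}''$, which is harmless since then the inner and outer pieces coincide along the orbit). For $j\ge j_1$ the only critical points encountered along $f^{\tau_j}$ at the relevant depth belong to nests in $\omega(\mathbf{P})$, and now Theorem~\ref{p-nest}(\ref{no-postcrit}) legitimately forces their images into $C_{\widetilde n_j}$, giving $\deg(f^{\tau_j}\colon P_{\tau_j+n'_j}\to C_{n'_j})=\deg(f^{\tau_j}\colon P_{\tau_j+\widetilde n_j}\to C_{\widetilde n_j})\le d^{d-1}$. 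The paper carries this out (with first entry into $C_{n_j}$ rather than $C_{\widetilde n_j}$, an immaterial variation); your proof will be complete once you insert this step. Assumption~\ref{asp-n0}, which you flag as relevant, does not by itself supply the threshold $j_1$---that depends on $\mathbf{P}$.
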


\begin{proof}
Let $\mathbf{C}\in \omega(\mathbf{P})\cap \mathcal P_{\rm c}$ and construct a principle nest as Theorem \ref{p-nest}. 
Choose $j_1=j_1(\mathbf{P})\geq j_0$ such that for each $\mathbf{C}''\in \mathcal P_{\rm c}\setminus \omega(\mathbf{P})$, we have  $$f^k(P_{k+n'_{j_1}})\cap  C''_{n'_{j_1}}=\emptyset$$ for any $k\geq0$ except at most one $k$ with $\sigma^k(\mathbf{P}) = \mathbf{C}''$. 
For $j\geq j_1$, let $r_j\geq 1$ be the first entry time of $\mathbf{P}$ into $C_{n_j}$. By Theorem \ref{p-nest}(\ref{no-postcrit}), 
$$\deg(f^{r_j}:P_{r_j+n'_j}\to C_{n'_j})=\deg(f^{r_j}:P_{r_j+n_j}\to C_{n_j}).$$ 
By Lemma \ref{first-entry-time}, this degree is bounded by $d^{d-1}$. 
By Theorem \ref{p-nest}(\ref{complex-bound}), 
$$\operatorname{mod}(P_{r_j+n'_j}\setminus\overline{P_{r_j+n_j}})\geq \frac{\operatorname{mod}(C_{n'_j}\setminus\overline{C_{n_j}})}{d^{d-1}}\geq \frac{m}{d^{d-1}}.$$ 
Since $r_{j+1}\geq r_j$, the annuli $\{P_{r_j+n'_j}\setminus\overline{P_{r_j+n_j}}\}_{j\geq j_1}$ are nested. 
Then the Gr\"otzsch inequality implies $\operatorname{mod}(P_0\setminus\operatorname{end}(\mathbf{P}))=\infty$. 
So $\operatorname{end}(\mathbf{P})$ is a singleton.
\end{proof}


\begin{proposition}
\label{wandering-end}
Let $\mathbf{P}$ be a wandering nest intersecting $Y^1(f)$, i.e. $\mathbf{P}\in\mathcal P_{\rm w}\cap \mathcal{P}^\#$. Then $\operatorname{end}(\mathbf{P})$ is a singleton. 
\end{proposition}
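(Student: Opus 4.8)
The plan is to argue by dichotomy on the combinatorial position of $\mathbf{P}$ relative to the set of critical nests, using the bounded-degree lemmas and the persistent-recurrence lemma as a complete case division. First I would dispose of the case $\mathbf{P}\notin\mathcal{P}^*$: by Lemma~\ref{nest-degenerate} such a nest is preperiodic, contradicting the wandering hypothesis, so we may assume $\mathbf{P}\in\mathcal{P}^*$, hence $\mathbf{P}\in\mathcal{P}^*\cap\mathcal{P}^\#$. Now I would inspect $\omega(\mathbf{P})$. If $\omega(\mathbf{P})$ contains a preperiodic nest, Lemma~\ref{BD-1} gives the bounded degree condition; if there is $\mathbf{Q}\in\omega(\mathbf{P})$ with $\omega(\mathbf{Q})\subsetneqq\omega(\mathbf{P})$, Lemma~\ref{BD-2} gives it; if $\omega(\mathbf{P})$ contains no critical nest, Lemma~\ref{BD-non-critical} gives it; and if some $C_n$ (with $\mathbf{C}\in\omega(\mathbf{P})\cap\mathcal{P}_{\rm c}$, $n\geq n_0$) has infinitely many successors, Lemma~\ref{BD-reluctant} gives it. In each of these sub-cases, $\mathbf{P}\in\mathcal{P}^*\cap\mathcal{P}^\#$ satisfies the bounded degree condition, so by Lemma~\ref{BD2elevator} it satisfies the elevator condition, and then Lemma~\ref{elevator} shows $\operatorname{end}(\mathbf{P})$ is a singleton.

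The remaining case is precisely the logical negation of the disjunction of the four hypotheses above: $\omega(\mathbf{P})\subset\mathcal{P}_{\rm w}$ (no preperiodic nest), $\omega(\mathbf{P})\cap\mathcal{P}_{\rm c}\neq\emptyset$ (there is a critical nest), $\omega(\mathbf{Q})=\omega(\mathbf{P})$ for all $\mathbf{Q}\in\omega(\mathbf{P})$, and every $C_n$ has only finitely many successors. I would need to check that these four conditions, taken together, imply the remaining clause in the definition of persistent recurrence, namely that $\mathbf{C}'\in\omega(\mathbf{C})$ and $\mathbf{C}\in\omega(\mathbf{C}')$ for any $\mathbf{C},\mathbf{C}'\in\omega(\mathbf{P})\cap\mathcal{P}_{\rm c}$. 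This follows from $\omega(\mathbf{Q})=\omega(\mathbf{P})$ applied to $\mathbf{Q}=\mathbf{C}$ and $\mathbf{Q}=\mathbf{C}'$ together with $\sigma(\omega(\mathbf{P}))\subset\omega(\mathbf{P})$ and the standard fact that the full forward orbit of any $\mathbf{C}\in\omega(\mathbf{P})$ lies in $\omega(\mathbf{P})$, so that $\mathbf{C},\mathbf{C}'\in\omega(\mathbf{C})=\omega(\mathbf{P})=\omega(\mathbf{C}')$. Hence $\mathbf{P}$ satisfies the persistent recurrence condition, and Lemma~\ref{persistent-end} yields that $\operatorname{end}(\mathbf{P})$ is a singleton.

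The main obstacle I expect is purely bookkeeping: making sure the four bounded-degree lemmas together with the persistent-recurrence definition genuinely exhaust all combinatorial possibilities, i.e. that the negation of ``$\mathbf{P}$ falls under one of Lemmas~\ref{BD-1}, \ref{BD-2}, \ref{BD-non-critical}, \ref{BD-reluctant}'' is exactly the persistent recurrence condition. The subtle points are (i) that ``$\omega(\mathbf{Q})\subsetneqq\omega(\mathbf{P})$ fails for all $\mathbf{Q}$'' becomes ``$\omega(\mathbf{Q})=\omega(\mathbf{P})$ for all $\mathbf{Q}\in\omega(\mathbf{P})$'' because $\omega(\mathbf{Q})\subset\omega(\mathbf{P})$ always holds, and (ii) that the finite-successor clause of persistent recurrence is demanded only for $\mathbf{C}\in\omega(\mathbf{P})\cap\mathcal{P}_{\rm c}$ and $n\geq n_0$, which is exactly the negation of the hypothesis of Lemma~\ref{BD-reluctant}. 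Once the case analysis is seen to be exhaustive, each branch terminates immediately via an already-proven lemma, so there is nothing else to compute.
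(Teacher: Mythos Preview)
Your proposal is correct and follows essentially the same route as the paper's proof: split into the persistent-recurrence case (handled by Lemma~\ref{persistent-end}) and its complement (handled by Lemmas~\ref{BD-1}--\ref{BD-reluctant} to get bounded degree, then Lemma~\ref{BD2elevator} and Lemma~\ref{elevator}), using Lemma~\ref{nest-degenerate} to ensure $\mathbf{P}\in\mathcal{P}^*$. The paper states the dichotomy more tersely without spelling out the bookkeeping you do in verifying that the negation of the four bounded-degree hypotheses is exactly the persistent recurrence condition; your explicit check of the mutual-recurrence clause via $\omega(\mathbf{C})=\omega(\mathbf{P})=\omega(\mathbf{C}')$ is a welcome clarification.
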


\begin{proof}
If $\mathbf{P}$ satisfies the persistent recurrence condition, then $\operatorname{end}(\mathbf{P})$ is a singleton by Lemma \ref{persistent-end}. 
Now assume $\mathbf{P}$ does not satisfy the persistent recurrence condition. By Lemmas \ref{BD-1} to \ref{BD-reluctant}, the nest $\mathbf{P}$ satisfies the bounded degree condition. By Lemma \ref{nest-degenerate}, we have $\mathbf{P}\in \mathcal P_{\rm w}\subset\mathcal P^*$. Then Lemma \ref{BD2elevator} implies that $\mathbf{P}$ satisfies the elevator condition. It follows from Lemma \ref{elevator} that $\operatorname{end}(\mathbf{P})$ is a singleton. 
\end{proof}

\section{Local connectivity}
\label{sec-lc}

Now we return to the discussion in \S\ref{sec-tree-level-1}. 
Let $f\in \mathcal Y_d^1$. Then the boundary $\partial U_f(0)$ contains a critical point or a parabolic point, and $J(f)$ is connected. 
This guarantees that the Fatou tree $Y^1(f)$ of level $1$ is well-defined, and the filled Julia set $K(f)$ has a limb decomposition for $Y^1(f)$. 
Up to now, we haven't fully understand the connection between $Y^1(f)$ and a limb growing from $Y_\infty^1(f)$.  

Recall that each $\mathbf{U}\in B^1(f)$ determines a decreasing sequence $\{S_f(U_n)\}_{n\in\mathbb{N}}$ of sectors, and the limb $L_{\mathbf{U}}(f)$ is the intersection of $K(f)$ and these sectors. 
Lemma \ref{Y1-lc} says that if $L_{\mathbf{U}}(f)\cap Y^1(f)$ is a singleton for any $\mathbf{U}\in B^1(f)$, then $Y^1(f)$ is locally connected. 
By Lemma \ref{U-prep}, if $\mathbf{U}$ is preperiodic, then $L_{\mathbf{U}}(f)\cap Y^1(f)$ is a singleton. 
To prove the local connectivity of $Y^1(f)$, it needs the last puzzle piece. 

\begin{lemma}
\label{U-wand} 
Let $f\in\mathcal{Y}_d^1$, and let $\mathbf{U}\in B^1(f)$. 
Suppose $\mathbf{U}$ is wandering under $\sigma_f^1$. Then $L_{\mathbf{U}}(f)\cap Y^1(f)$ is a singleton. 
Moreover, if we denote it by $\{y\}$, then $y$ satisfies the following properties.  
\begin{enumerate} 
\item $y$ is wandering. 

\item 
\label{U-wand-separate}
When $L_{\mathbf{U}}(f)=\{y\}$, there is only one external ray landing at $y$; when $L_{\mathbf{U}}(f)\neq\{y\}$, two external rays landing at $y$ separate $L_{\mathbf{U}}(f)$ from $Y^1(f)$. 
\end{enumerate}
\end{lemma}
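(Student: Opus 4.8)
The plan is to pass from the Fatou‑tree branch $\mathbf U=\{U_n\}_{n\in\mathbb N}$ to a nest of puzzle pieces and invoke Proposition \ref{wandering-end}. Write $S_f(U_n)=S_f(\theta_n,\theta'_n)$ for $n\ge 1$. Exactly as in the proof of Lemma \ref{U-prep}, the angles $\theta_1,\theta_2,\dots$ (resp. $\theta'_1,\theta'_2,\dots$) are monotone in cyclic order, hence converge to angles $\theta$ and $\theta'$; using $\overline{U_m}\subset\overline{S_f(U_n)}$ for $m>n$, the set $Z:=\bigcap_n\overline{\bigcup_{m\ge n}U_m}$ is a nonempty compact subset of $L_{\mathbf U}(f)\cap Y^1(f)\subset Y^1_\infty(f)\subset J(f)$. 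Granting that $L_{\mathbf U}(f)\cap Y^1(f)$ is a single point $y$, item (\ref{U-wand-separate}) follows exactly as in the proof of Lemma \ref{iff-diam-0}: one builds an access to $y$ inside $\mathbb C\setminus K(f)$ — a single path when $L_{\mathbf U}(f)=\{y\}$, and a Jordan curve through $y$ separating $\overline{Y^1(f)}\setminus\{y\}$ from $L_{\mathbf U}(f)\setminus\{y\}$ otherwise — and appeals to the Lindel\"of theorem and the F. and M. Riesz theorem to pin down the landing rays. So it suffices to prove $L_{\mathbf U}(f)\cap Y^1(f)$ is a singleton.

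The first step is to rule out preperiodic points. Suppose $w\in L_{\mathbf U}(f)\cap Y^1(f)$ is preperiodic; after replacing $\mathbf U$ by a $\sigma_f^1$-iterate we may take $w$ periodic, and a periodic point of $J(f)$ is repelling or parabolic, so only finitely many external rays land at $w$ and $f$ permutes them. Since the Fatou components of $\mathbf U$ accumulate near $w$ and the sectors $S_f(U_n)$ shrink toward $w$, the $\sigma_f^1$-orbit of $\mathbf U$ would be forced to be eventually periodic, contradicting that $\mathbf U$ is wandering. (Equivalently, in this situation $\theta,\theta'$ would be rational and the argument of Lemma \ref{U-prep} via \cite[Lemma 3.9]{Kiwi01} would make $\mathbf U$ preperiodic.) Hence no point of $L_{\mathbf U}(f)\cap Y^1(f)$ is preperiodic; in particular all such points are wandering, and since $J(f)\cap\bigcup_{k\in\mathbb N}f^{-k}(\Gamma)$ consists of preperiodic points, $L_{\mathbf U}(f)\cap Y^1(f)$ is disjoint from $\bigcup_{k\in\mathbb N}f^{-k}(\Gamma)$.

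Next I would build the nest. One first checks that $Z$ is connected and in fact $Z=L_{\mathbf U}(f)\cap Y^1(f)$: any $y$ in the right-hand side is a limit of Fatou components of the tree lying in $\bigcap_n\overline{S_f(U_n)}$, and such components are cofinal in the branch, so $y\in Z$. Being connected, compact, and disjoint from $f^{-k}(\Gamma)$, the set $Z$ lies in a single puzzle piece $P_k$ of depth $k$ for every $k\in\mathbb N$; as the $U_m$ accumulate onto $Z\subset P_k$, we get $U_m\subset P_k$ for all large $m$, so $P_k\cap Y^1(f)\neq\emptyset$. The $P_k$ are nested, forming a nest $\mathbf P=\{P_k\}\in\mathcal P^\#$ with $L_{\mathbf U}(f)\cap Y^1(f)\subset\operatorname{end}(\mathbf P)$. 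The assignment $\mathbf U\mapsto\mathbf P$ intertwines $\sigma_f^1$ with the shift $\sigma$ on nests, up to finite bookkeeping near the root, because $f(P_k)$ is the depth-$(k-1)$ puzzle piece through the far end of $\sigma_f^1(\mathbf U)$; since $\mathbf U$ is wandering so is $\mathbf P$. By Proposition \ref{wandering-end}, $\operatorname{end}(\mathbf P)$ is a singleton, hence so is $L_{\mathbf U}(f)\cap Y^1(f)=\{y\}$, and $y$ is wandering by the previous paragraph (or directly: a preperiodic $y=\operatorname{end}(\mathbf P)$ would be the end of a unique, hence preperiodic, nest, contradicting that $\mathbf P$ is wandering).

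The main obstacle is the third paragraph: verifying that $Z$ is connected, that $Z=L_{\mathbf U}(f)\cap Y^1(f)$, that $Z$ is tracked at every depth by a single puzzle piece, and that the resulting nest lies in $\mathcal P^\#$ and is genuinely wandering — in short, that the puzzle-piece nest faithfully records the Fatou-tree branch. This rests on the ``no preperiodic point in $L_{\mathbf U}(f)\cap Y^1(f)$'' claim, whose proof requires translating the wandering of the branch under $\sigma_f^1$ into the wandering of a symbolic itinerary near a (pre)periodic point. Once the wandering nest $\mathbf P\in\mathcal P^\#$ is in hand, Proposition \ref{wandering-end} supplies the real content.
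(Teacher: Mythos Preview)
Your overall strategy --- pass to a wandering nest $\mathbf P\in\mathcal P_{\rm w}\cap\mathcal P^\#$ whose end contains $L_{\mathbf U}(f)\cap Y^1(f)$ and invoke Proposition~\ref{wandering-end} --- is exactly the paper's. The execution differs in two places, and in both the paper is more direct.

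First, the paper bypasses your paragraph~2 entirely. It writes
\[
X:=L_{\mathbf U}(f)\cap Y^1(f)=\bigcap_{n\in\mathbb N}\bigl(\overline{S_f(U_n)}\cap Y^1(f)\bigr),
\]
a decreasing intersection of compact connected sets and hence a continuum, and proves in one stroke that $X$ is wandering: given $N$, the wandering of $\mathbf U$ under $\sigma_f^1$ yields $M$ with the sectors $\{S_f(f^n(U_M))\}_{0\le n\le N}$ pairwise disjoint, so $\{f^n(X)\}_{0\le n\le N}$ are pairwise disjoint. A wandering continuum automatically misses the preperiodic set $J(f)\cap\bigcup_k f^{-k}(\Gamma)$, so it sits in a unique puzzle piece at every depth and the (wandering) nest follows at once. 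Your argument that a hypothetical periodic $w\in X$ would force $\mathbf U$ to be eventually periodic is the step you yourself flag as shaky; the underlying reason it works is simply that the limbs $L_{\mathbf V}(f)$ for distinct $\mathbf V\in B^1(f)$ are disjoint, and the paper's disjoint-sector argument is essentially this observation packaged more efficiently --- no separate ``rule out preperiodic points'' step, and no need to identify $Z$ with $L_{\mathbf U}(f)\cap Y^1(f)$.

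Second, for item~(\ref{U-wand-separate}) the paper does not rebuild accesses \`a la Lemma~\ref{iff-diam-0}. Once $\operatorname{end}(\mathbf P)=\{y\}$, it simply notes that the limit rays $R_f(\theta),R_f(\theta')$ (with $\theta=\lim\theta_n$, $\theta'=\lim\theta'_n$) enter every $P_k$ and stay there, hence land at $y$; whether $\theta=\theta'$ then distinguishes the two cases. This is cheaper than constructing paths and invoking Lindel\"of and F.~and M.~Riesz.
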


\begin{proof} 
We claim that $X:= L_{\mathbf{U}}(f)\cap Y^1(f)$ is a wandering continuum. Since $X = \bigcap_{n\in\mathbb{N}}(\overline{S_f(U_n)}\cap Y^1(f))$, it is connected. Given $N>0$. Since $\mathbf{U}$ is wandering, there is an $M$ such that the sectors in $\{S_f(f^n(U_M))\}_{0\leq n\leq N}$ are pairwise disjoint. Then $\{f^n(X)\}_{0\leq n\leq N}$ are pairwise disjoint. Because $N$ is arbitrary, the continuum $X$ is wandering. 

Construct a puzzle as in \S\ref{sect-wandering}. 
Recall that $\mathcal{P}$ is the collection of all nests. 
and $f$ induces a map $\sigma: \mathcal{P}\to\mathcal{P},
\{P_n\}_{n\geq 0}\mapsto\{f(P_n)\}_{n\geq1}$. 
Since $X$ is a wandering continuum, there is a unique wandering nest $\mathbf{P}$ such that $X\subset\operatorname{end}(\mathbf{P})$. 
By Proposition \ref{wandering-end}, the end of $\mathbf{P}$ is a singleton, and so is $X$. 

Let $X=\{y\}$. Then $y$ is a wandering point. To construct external rays landing at $y$, write $S_f(U_n)=S_f(\theta_n,\theta'_n)$ for $n\geq1$. 
Then $\theta_1,\theta_2,\dots$ (resp. $\theta'_1,\theta'_2,\dots$) are in positive (resp. negative) cyclic order. So they have limits $\theta$ and $\theta'$ respectively. For any $k\geq0$, the external rays $R_f(\theta)$ and $R_f(\theta')$ will enter into $P_k$ and stay in it. Since the puzzle piece $P_k$ shrinks down to the point $y$ as $k\to \infty$, these two external rays land at $y$. If $\theta=\theta'$, then $L_{\mathbf{U}}(f)=\{y\}$; if $\theta\neq \theta'$, then $R_f(\theta)$ and $R_f(\theta')$ separate $L_{\mathbf{U}}(f)$ from $Y^1(f)$. 
\end{proof}

\begin{theorem}
\label{thm-Y1-lc}
Let $f\in\mathcal{Y}_d^1$. Then $Y^1(f)$ is locally connected. 
\end{theorem}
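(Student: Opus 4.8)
The plan is to combine the two cases already resolved — the preperiodic case (Lemma~\ref{U-prep}) and the wandering case (Lemma~\ref{U-wand}) — via the abstract local connectivity criterion of Lemma~\ref{Y1-lc}. Recall that by Lemma~\ref{Y1-lc}, to show $Y^1(f)$ is locally connected it suffices to verify that $L_{\mathbf{U}}(f)\cap Y^1(f)$ is a singleton for every branch $\mathbf{U}\in B^1(f)$. The set $B^1(f)$ of branches of the tree $(T^1(f),<)$ partitions into those branches that are preperiodic under the symbolic map $\sigma_f^1$ and those that are wandering; there is no third possibility, since $\sigma_f^1$ is a self-map of $B^1(f)$ and a branch is either eventually periodic or not. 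Thus the proof is essentially a one-line synthesis.

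First I would fix an arbitrary branch $\mathbf{U}\in B^1(f)$. If $\mathbf{U}$ is preperiodic under $\sigma_f^1$, then Lemma~\ref{U-prep} applies directly and gives that $L_{\mathbf{U}}(f)\cap Y^1(f)$ is a singleton. If $\mathbf{U}$ is wandering under $\sigma_f^1$, then Lemma~\ref{U-wand} applies and again yields that $L_{\mathbf{U}}(f)\cap Y^1(f)$ is a singleton. Since $\mathbf{U}$ was arbitrary, the hypothesis of Lemma~\ref{Y1-lc} is satisfied, and we conclude that $Y^1(f)$ is locally connected.

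There is essentially no obstacle at this stage: all the substantive work has been done in the preceding lemmas — in particular the puzzle-theoretic analysis in \S\ref{sect-wandering} culminating in Proposition~\ref{wandering-end} (the end of a wandering nest meeting $Y^1(f)$ is a singleton), which feeds into Lemma~\ref{U-wand}, and the landing result \cite[Lemma~3.9]{Kiwi01} which feeds into Lemma~\ref{U-prep}. The only minor point worth stating explicitly is that $B^1(f) = B^1_{\mathrm{prep}}(f) \sqcup B^1_{\mathrm{wand}}(f)$, i.e. every branch is either preperiodic or wandering under $\sigma_f^1$; this is immediate from the definition of preperiodicity for a point under a self-map. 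Hence the proof reduces to: apply Lemma~\ref{U-prep} or Lemma~\ref{U-wand} according to the type of $\mathbf{U}$, then invoke Lemma~\ref{Y1-lc}.
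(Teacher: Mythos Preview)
Your proposal is correct and follows exactly the paper's approach: use Lemmas~\ref{U-prep} and \ref{U-wand} to verify the hypothesis of Lemma~\ref{Y1-lc} for every branch $\mathbf{U}\in B^1(f)$, then conclude. The paper's proof is the same two-line synthesis you describe.
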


\begin{proof}
By Lemmas \ref{U-prep} and \ref{U-wand}, for any $\mathbf{U}\in B^1(f)$, the intersection of $L_{\mathbf{U}}(f)$ and $Y^1(f)$ is a singleton. 
Then Lemma \ref{Y1-lc} gives the conclusion. 
\end{proof}

\begin{corollary}
[Limb decomposition]
\label{coro-limb}
Let $f\in\mathcal{Y}_d^1$. Then the following properties hold. 
\begin{enumerate}
\item There is a unique decomposition $$K(f)=Y^1(f)^\circ\sqcup \bigsqcup_{y\in \partial Y^1(f)} L_y$$ such that $L_y\cap Y^1(f)=\{y\}$ and $L_y$ is a continuum for any $y\in \partial Y^1(f)$. 

\item 
\label{coro-limb-ray}
For any $y\in \partial Y^1(f)$, there exist external rays landing at $y$. If $L_y\neq \{y\}$, there are finitely many sectors with root $y$ separating $L_y$ from $Y^1(f)$. 

\item The diameters of $L_y$'s tend to zero. 
\end{enumerate}
\end{corollary}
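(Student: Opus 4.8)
\textbf{Proof proposal for Corollary \ref{coro-limb}.}

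The plan is to assemble the three assertions from the structural results already in hand: Theorem \ref{thm-Y1-lc} (local connectivity of $Y^1(f)$), the refined limb decomposition \eqref{Kf-Y1f}, and Lemmas \ref{U-prep} and \ref{U-wand} (each of which pins down a limb of the tree-boundary to a single point and produces the separating rays). The main point is that \eqref{Kf-Y1f} decomposes $K(f)$ into pieces indexed by two kinds of roots --- points $y\in\partial Y^1(f)\setminus Y^1_\infty(f)$, handled by Theorem \ref{RY} applied to the Fatou component $U\in T^1(f)$ carrying $y$, and branches $\mathbf U\in B^1(f)$, handled by Lemmas \ref{U-prep}, \ref{U-wand} --- and that after using those lemmas each surviving limb $L_{\mathbf U}(f)$ touches $Y^1(f)$ in exactly one point $y\in Y^1_\infty(f)$. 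So I would first observe that \eqref{Kf-Y1f} can be reindexed purely by $\partial Y^1(f)$: for $y\in\partial Y^1(f)\setminus Y^1_\infty(f)$ set $L_y=L_y(K(f),Y^1(f))=\bigcap_{U\in T^1(f),\,y\in\partial U}L_y(K(f),U)$, while for $y\in Y^1_\infty(f)$ the unique branch $\mathbf U\in B^1(f)$ with $L_{\mathbf U}(f)\cap Y^1(f)=\{y\}$ gives $L_y:=L_{\mathbf U}(f)$; distinct branches accumulating at the same $y$ cannot occur once each intersection is a singleton, so this is well defined. Each such $L_y$ is a continuum with $L_y\cap Y^1(f)=\{y\}$, which is statement (1); uniqueness of the decomposition follows exactly as in Lemma \ref{limb-K-U}, since the repelling periodic points are dense in $\partial U_f(0)$ and hence the points of $\partial Y^1(f)$ with landing external rays are dense.

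For part (2): if $y\in\partial Y^1(f)\setminus Y^1_\infty(f)$, then $y\in\partial U$ for some $U\in T^1(f)$, and Theorem \ref{RY}(\ref{RY-separate}) applied to the pair $(K(f),U)$ provides an external ray landing at $y$; when $L_y\ne\{y\}$ the nontriviality is witnessed inside one of the sectors $S_f(U')$ for the children $U'>U$ together with the $K(f)$-limbs of $U$ itself, and finitely many of these sectors separate $L_y$ from $Y^1(f)$ because $\partial U$ is a Jordan curve and only finitely many rays land at $y$. If $y\in Y^1_\infty(f)$, then the corresponding branch $\mathbf U$ is either preperiodic or wandering, and Lemma \ref{U-prep}(\ref{U-prep-separate}) or Lemma \ref{U-wand}(\ref{U-wand-separate}) directly yields the landing rays and, when $L_y\ne\{y\}$, the two rays separating $L_y$ from $Y^1(f)$; a single sector suffices in that case.

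For part (3), the diameters tending to zero: I would deduce this from Lemma \ref{iff-diam-0} with $U$ replaced by $Y^1(f)^\circ$ --- here is where I need local connectivity, because Lemma \ref{iff-diam-0} is stated for a limb decomposition around a \emph{Jordan} domain, whereas $Y^1(f)^\circ$ need not be a Jordan domain. The honest route is therefore to exhaust $Y^1(f)$ by the pieces $X^1_n(f)$, each a finite union of Jordan disks touching at boundary points, apply Theorem \ref{RY} and the finitely-many-separating-sectors statement of part (2) to get, for every $\varepsilon>0$, a finite collection of sectors of small diameter covering all limbs of diameter $\ge\varepsilon$, and then argue by contradiction: a non-repeating sequence of limbs of diameter $\ge\varepsilon$ has a Hausdorff limit $L$, which as in the proof of Lemma \ref{iff-diam-0} either lies in $\partial Y^1(f)$ (then local connectivity and the density of separating rays force an arc of $\partial Y^1(f)$ to be separated off, impossible) or meets $K(f)\setminus Y^1(f)$ at a point in some limb $L_a$, whose separating sectors then confine all but one of the $L_{x_k}$ to the complementary side, again a contradiction. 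I expect this last part to be the main obstacle: one must carefully check that the separating-sector machinery of part (2) is uniform enough --- i.e.\ that sectors of controlled diameter do the separating --- which uses both the diameter-control on the Fatou-component limbs from Theorem \ref{RY}(4) at every level of the tree and the compactness of $Y^1(f)$, together with the tree structure to prevent limbs from escaping to ``infinitely deep'' branches without shrinking.
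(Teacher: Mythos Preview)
Your handling of parts (1) and (2) matches the paper's argument: reindex \eqref{Kf-Y1f} by points of $\partial Y^1(f)$ using Lemmas~\ref{U-prep} and~\ref{U-wand} to identify each $L_{\mathbf U}(f)$ with a single root, and read off the rays/sectors from Theorem~\ref{RY}(\ref{RY-separate}) together with those two lemmas. (A small remark: the paper derives uniqueness from the sector description in part~(2), not by invoking Lemma~\ref{limb-K-U}, which is stated only for Jordan domains.)

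For part (3) your route diverges from the paper, and the divergence is exactly where your acknowledged ``main obstacle'' sits. Your Case~2 (the Hausdorff limit $L$ meets $K(f)\setminus Y^1(f)$) does go through as you say, using the separating sectors from part~(2). But Case~1 ($L\subset\partial Y^1(f)$) is a genuine gap: in Lemma~\ref{iff-diam-0} this case works because $\partial U$ is a Jordan curve, so $L$ is an arc and two landing rays at interior points of that arc trap the $L_{x_k}$'s on one side. Here $\partial Y^1(f)$ is not a Jordan curve, $L$ is only a continuum in it, and the phrase ``local connectivity and the density of separating rays force an arc of $\partial Y^1(f)$ to be separated off'' does not name a mechanism that actually produces the separation. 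Making it precise seems to require exactly the kind of global control you flag as missing.

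The paper sidesteps this entirely by a uniformization trick that you should note: take the Riemann map $\phi:\mathbb C\setminus Y^1(f)\to\mathbb C\setminus\overline{\mathbb D}$; by Theorem~\ref{thm-Y1-lc} (local connectivity of $Y^1(f)$) the inverse extends to a continuous $\psi:\mathbb C\setminus\mathbb D\to\mathbb C\setminus Y^1(f)^\circ$. The sectors $S\in\mathcal A_y$ push forward under $\phi$ to limbs attached to the \emph{Jordan} curve $\partial\mathbb D$ (the two boundary rays of each sector land at the same point of $\partial\mathbb D$ since $S\cap Y^1(f)=\emptyset$), and condition~(1) of Lemma~\ref{iff-diam-0} is then satisfied for this pushed-forward decomposition around $\mathbb D$. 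Lemma~\ref{iff-diam-0} gives $\operatorname{diam}\phi(S\cap K(f))\to 0$, and uniform continuity of $\psi$ on $\overline{\mathbb D}$ pulls this back to $\operatorname{diam}(S\cap K(f))\to 0$, hence $\operatorname{diam}L_y\to 0$. So local connectivity is used not to repair the Hausdorff-limit argument in situ, but to reduce to the Jordan setting where Lemma~\ref{iff-diam-0} applies verbatim.
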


\begin{proof}
For $y\in\partial Y^1(f)\setminus \partial Y^1_\infty(f)$, we let $L_y = L_y(K(f),Y^1(f))$ as in \eqref{Kf-Y1f}. 
Now consider $y\in Y^1_\infty(f)$. There is a unique $\mathbf{U}\in B^1(f)$ such that $y\in L_{\mathbf{U}}(f)$.  
Define $L_y= L_{\mathbf{U}}(f)$. By Lemmas \ref{U-prep} and \ref{U-wand}, we have $L_{\mathbf{U}}(f)\cap Y^1(f)=\{y\}$. Therefore the sets in $\{L_y\}_{y\in Y^1_\infty(f)}$ are pairwise disjoint. 
This gives the desired decomposition, which is the same as \eqref{Kf-Y1f}. 

By Theorem \ref{RY}(\ref{RY-separate}), Lemmas \ref{U-prep}(\ref{U-prep-separate}) and \ref{U-wand}(\ref{U-wand-separate}), 
for any trivial limb $L_y$, there is at least one external ray landing at $y$; 
for any nontrivial limb $L_y$, there are finitely many sectors $S_1,\dots, S_n$ with root $y$ such that $L_y=K(f)\cap\bigcup_{1\leq k\leq n} \overline{S_k}$. 
Furthermore, if we require that $\overline{S_k}\setminus\{y\}$ with $1\leq k\leq n$ are pairwise disjoint, then the family $\{S_k\}_{1\leq k\leq n}$ of sectors is unique.  
This gives the uniqueness of the decomposition. 


Let $\phi:\mathbb{C}\setminus Y^1(f)\to \mathbb{C}\setminus \overline{\mathbb{D}}$ be a conformal map. 
By Theorem \ref{thm-Y1-lc}, $\phi^{-1}$ can be extended to a continuous map $\psi:\mathbb{C}\setminus \mathbb{D}\to \mathbb{C}\setminus Y^1(f)^{\circ}$. 
Let $y\in \partial Y^1(f)$. 
If $L_y$ is nontrivial, let $\mathcal{A}_y$ be the family $\{S_k\}_{1\leq k\leq n}$ of sectors as above. Consider $S = S_f(\theta,\theta')\in\mathcal{A}_y$. By \cite[Corollary 6.4]{McM}, $\phi(R_f(\theta))$ and $\phi(R_f(\theta'))$ land at points in $\partial\mathbb{D}$. Since $S\cap Y^1(f)=\emptyset$, they lands at the same point. 
If $L_y$ is trivial, let $\mathcal{A}_y$ be the family of all external rays landing at $y$. Then for each $R_f(\theta)\in \mathcal{A}_y$, by \cite[Corollary 6.4]{McM} again, the ray $\phi(R_f(\theta))$ lands at a point in $\partial\mathbb{D}$. 
By $$\mathbb{C} = \overline{\mathbb{D}}\sqcup\bigsqcup_{
y\in \partial Y^1(f),\ S\in\mathcal{A}_y} \phi(S),$$
the limb decomposition of $\mathbb{C}\setminus \phi(\mathbb{C}\setminus K(f))$ for $\mathbb{D}$ satisfies the first condition in Lemma \ref{iff-diam-0}, which implies that the diameters of $\phi(S\cap K(f))$'s tend to zero. By the uniform continuity of $\psi$, the diameters of $S\cap K(f)$'s tend to zero. It follows that the diameters of $L_y$'s tend to zero. 
\end{proof}


We call $L_y$ the \emph{limb} for $(K(f),Y^1(f))$ with root $y$. We also denote it by $L_y(K(f),Y^1(f))$ to distinguish the limbs of $K(f)$ for a Fatou component. 

Let $y\in \partial Y^1(f)$. By Corollary \ref{coro-limb}(\ref{coro-limb-ray}), there is at least one external ray landing at $y$. 
Then by \cite[Corollary 6.7]{McM}, the number of the connected components of $K(f)\setminus\{y\}$ is exactly the number of the external rays landing at $y$. If $y$ is wandering, then we can compute this number. 

\begin{corollary} 
[Number of external rays]
\label{ray-num}
Let $f\in\mathcal{Y}_d^1$. 
If $y\in \partial Y^1(f)$ is wandering, then the number $N(y)$ of external rays landing at $y$ is given by 
$$N(y)=\prod_{n\in\mathbb{N}}\deg(f, f^n(y)).$$
Moreover, we have the following dichotomy: 
\begin{enumerate}
\item If $y\in \partial Y^1(f)\setminus Y^1_\infty(f)$ is wandering, then the number of connected components of $Y^1(f)\setminus\{y\}$ is $N(y)$. 

\item If $y\in Y^1_\infty(f)$ is wandering, then $Y^1(f)\setminus\{y\}$ is connected. 
\end{enumerate}
\end{corollary}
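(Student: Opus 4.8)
The plan is to reduce the count of external rays to a count of connected components and then propagate it along the forward orbit of $y$ by a pullback argument. By \cite[Corollary 6.7]{McM} (invoked just above the statement), $N(y)$ equals the number of connected components of $K(f)\setminus\{y\}$, so throughout I work with this component count. Call a wandering point $y'\in\partial Y^1(f)$ \emph{clean} if its whole forward orbit avoids $\crit(f)$; the first task is to show $N(y')=1$ for clean $y'$. I would first prove that the tree limb at a clean point is trivial: if $L_{y'}(K(f),Y^1(f))\neq\{y'\}$, a nondegenerate continuum $L$ hangs off $y'$ disjoint from $Y^1(f)$; along the clean orbit $f^n|_L$ is a local homeomorphism and the $f^n(L)$ are pairwise disjoint, so (using Sullivan's no-wandering-domains theorem together with the fact that periodic Fatou components keep a fixed diameter) $L$ contains no Fatou component and hence $L\subset J(f)$; as $L$ carries no preperiodic point it meets none of the $f^{-n}(\Gamma)$, so $L\cup\{y'\}$ is contained in the end of a wandering nest in $\mathcal P_{\rm w}\cap\mathcal P^\#$, contradicting Proposition \ref{wandering-end}. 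Granting this, every component of $K(f)\setminus\{y'\}$ meets $Y^1(f)$, so $N(y')$ equals the number of components of $Y^1(f)\setminus\{y'\}$; if $y'\in Y^1_\infty(f)$ this is $1$ because the connected sets $S_f(U_n)\cap Y^1(f)$ (with $\{U_n\}$ the branch converging to $y'$) form a neighbourhood basis of $y'$ in $Y^1(f)$ while $Y^1(f)\setminus S_f(U_n)$ is connected, whereas if $y'\in\partial Y^1(f)\setminus Y^1_\infty(f)$ then $y'$ is not a touching point of $T^1(f)$ --- every touching point being a preimage of a parabolic touching point of $X_0^1(f)$ or a place where two branches merge under $f$, hence pre-parabolic or pre-critical --- so exactly one Fatou component of $T^1(f)$ meets $y'$ and $Y^1(f)\setminus\{y'\}$ is connected. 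Either way $N(y')=1$.

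Next I would establish the pullback identity $N(y)=\deg(f,y)\cdot N(f(y))$ for wandering $y\in\partial Y^1(f)$. Note $f(y)$ again lies in $\partial Y^1(f)$ and is wandering, since $Y^1(f)$ and $J(f)$ are forward invariant. Take the puzzle nest $\mathbf P=\{P_n\}$ with $\operatorname{end}(\mathbf P)=\{y\}$ given by Proposition \ref{wandering-end}; for large $n$, $f\colon P_n\to f(P_n)$ is proper of degree $\deg(f,y)$ and branched at most over $y$, so the components of $K(f)\cap P_n\setminus\{y\}$ are exactly the $\deg(f,y)$-fold lifts of those of $K(f)\cap f(P_n)\setminus\{f(y)\}$; since $\operatorname{end}(\mathbf P)=\{y\}$ makes $\{P_n\}$ a shrinking neighbourhood basis of $y$ in $K(f)$, letting $n\to\infty$ gives the identity. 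Because $y$ is wandering, at most $d-1$ of the iterates $f^n(y)$ are critical; iterating the identity up to a stage $m$ beyond all of them and applying the base case yields
$$N(y)=\Big(\prod_{k=0}^{m-1}\deg(f,f^k(y))\Big)N(f^m(y))=\prod_{n\in\mathbb N}\deg(f,f^n(y)),$$
the product on the right being finite for the same reason.

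For the dichotomy: if $y\in Y^1_\infty(f)$ is wandering, $Y^1(f)\setminus\{y\}$ is connected by the tip-of-branch argument above, which did not use cleanness. If $y\in\partial Y^1(f)\setminus Y^1_\infty(f)$ is wandering, let $s(y)$ be the number of Fatou components of $T^1(f)$ meeting $y$; since $Y^1(f)$ is a simply connected tree of Jordan disks, deleting the point $y$ separates it into exactly those $s(y)$ branches, so $Y^1(f)\setminus\{y\}$ has $s(y)$ components. Pulling back the tree exactly as above --- each Fatou component of $T^1(f)$ meeting $f(y)$ has precisely $\deg(f,y)$ preimage components in $T^1(f)$ meeting $y$ --- gives $s(y)=\deg(f,y)\cdot s(f(y))$, and iterating with $s=1$ at a clean point gives $s(y)=\prod_n\deg(f,f^n(y))=N(y)$, as required.

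The main obstacle is the base case, i.e.\ ruling out a wandering continuum hanging off $Y^1(f)$ at a clean point: one must check carefully that such a continuum stays inside the puzzle picture --- disjoint from $\bigcup_n f^{-n}(\Gamma)$ and witnessing $Y^1(f)$ in every puzzle piece containing it --- so that Proposition \ref{wandering-end} applies. A secondary point is verifying in the pullback step that the component count localized inside $P_n$ actually stabilizes to the global $N(y)$ rather than merely dominating it, which is precisely where $\operatorname{end}(\mathbf P)=\{y\}$ enters.
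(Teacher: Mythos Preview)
Your overall scheme matches the paper's: reduce to the clean case (forward orbit of $y$ avoids $\crit(f)$), prove $N=1$ there, and then iterate the identity $N(y)=\deg(f,y)\cdot N(f(y))$; your treatment of the dichotomy also parallels the paper's reasoning. The difference --- and the gap --- is in how you attack the clean base case.

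Your plan is to show the limb $L_{y'}$ is trivial by trapping it in a single wandering nest and invoking Proposition~\ref{wandering-end}. This cannot work as written. First, a limb for $Y^1(f)$ may well contain entire Fatou components not belonging to $T^1(f)$ (for instance basins of other attracting or parabolic cycles), so ``$L\subset J(f)$'' is false in general, and Sullivan's theorem does not rescue it: the iterates $f^n(L_{y'})$ need not be pairwise disjoint, since $L_{y'}$ can contain preimage pieces of $Y^1(f)$ and then $f(L_{y'})\not\subset L_{f(y')}$. Second, even granting $L\subset J(f)$, the assertion ``$L$ carries no preperiodic point'' is false for any nondegenerate subcontinuum of $J(f)$, because preperiodic (indeed repelling periodic) points are dense there; hence $L$ will meet $\bigcup_n f^{-n}(\Gamma)$ and cannot sit inside the end of a single nest. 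Proposition~\ref{wandering-end} simply does not apply.

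The paper's argument for the clean base case is far more direct and avoids puzzles entirely. Assuming $N(y')>1$, one passes to an iterate lying on the boundary of a periodic $U\in T^1(f)$ (or stays at $y'$ in the limit case) and picks a sector $S_f(\alpha,\beta)$ rooted there and disjoint from $U$ (resp.\ from $Y^1(f)$). Because $y'$ is wandering, the forward sectors $S_f(d^{np}\alpha,d^{np}\beta)$ have distinct roots and are pairwise disjoint; once they contain no critical point their angular widths multiply by $d^p>1$ at each step, which is impossible. That single paragraph replaces your entire base-case argument.
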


\begin{proof} 
Let $y$ be a wandering point in $\partial Y^1(f)$. 
By Corollary \ref{coro-limb}(\ref{coro-limb-ray}), we have $N(y)\geq 1$. 
We claim that $N(y)=1$ if $\crit_y(f)=\emptyset$. 
Assume $N(y)>1$ and $\crit_y(f)=\emptyset$; we will find a contradiction. Since the orbit of $y$ avoids critical points, we have $N(f^n(y))=N(y)$ for any $n\geq 0$. 
Consider two cases. 

The first case is $y\in \partial Y^1(f)\setminus Y^1_\infty(f)$. In this case, there is a $q$ large enough such that $f^q(y)$ is on the boundary of a periodic attracting or parabolic Fatou component $U$, whose period is denoted by $p$. Because of $N(f^q(y))=N(y)>1$, we can choose a sector $S_f(\alpha,\beta)$ with root $f^q(y)$ and disjoint with $U$. Since $y$ is wandering, the sectors $S_f(d^{np}\alpha, d^{np}\beta)$ $(n\geq0)$ are pairwise disjoint. When $n$ is large enough, the sector $S_f(d^{np}\alpha, d^{np}\beta)$ contains no critical point, so the angular difference of $d^{np}\alpha$ and $d^{np}\beta$ grows exponentially as $n\to\infty$. This is a contradiction. 

The second case is $y\in Y^1_\infty(f)$. 
Since $N(y)>1$, there is a sector $S_f(\alpha,\beta)$ with root $y$ and disjoint with $Y^1(f)$. 
Similar to the first case, we can deduce a contradiction through $S_f(d^n\alpha, d^n\beta)$ $(n\geq0)$. 

Therefore $N(y)=1$ if $\crit_y(f)=\emptyset$. If $\crit_y(f)\neq \emptyset$, it concludes by 
$$N(f^n(y))=N(f^{n+1}(y))\cdot \deg(f, f^n(y)).$$

Assume $y\in \partial Y^1(f)\setminus Y^1_\infty(f)$ is wandering. By the construction of $Y^1(f)$, the number of connected components of $Y^1(f)\setminus\{y\}$ is also $\prod_{n\in\mathbb{N}}\deg(f, f^n(y))$.  

Assume $y\in Y^1_\infty(f)$ is wandering. 
There is a unique $\mathbf{U}\in B^1(f)$ such that $y\in L_{\mathbf{U}}(f)$.  
By Lemmas \ref{U-prep} and \ref{U-wand}, we have $L_{\mathbf{U}}(f)\cap Y^1(f)=\{y\}$. 
So $Y^1(f)\setminus\{y\}$ is equal to $\bigcup_{n\in\mathbb{N}}(Y^1(f)\setminus S_f(U_n))$, which is connected. 
\end{proof}

%
%

\section{Fatou tree of general level}
\label{sec-tree-k}

In this section, we will construct the Fatou tree of higher level, including the maximal Fatou tree (i.e. the Fatou tree of highest level). Then we will give a necessary but not sufficient condition for a polynomial on the regular boundary of the central hyperbolic component. 
Finally, we extend Fatou trees to $f\in\mathcal P_d$ with a parabolic fixed point at $0$. 

\subsection{Fatou tree of level $k$}
\label{subsec-tree-k}

Recall that $$\mathcal{Y}_d^0 = \{f\in\mathcal P_d\mid \text{$|f'(0)|<1$ and $J(f)$ is connected}\}.$$
To unify the form of notations, for each $f\in \mathcal{Y}_d^0$, let $Y^0(f) = \overline{U_f(0)}$ be the \emph{Fatou tree of level $0$}, and let $Y^0_\infty(f) = \partial U_f(0)$. 

Recall also that 
$$\mathcal{Y}_d^1=\{f\in\mathcal{Y}_d^0\mid
\text{$Y^0_\infty(f)$ contains critical points or parabolic points}\}.$$ 
For each $f\in\mathcal{Y}_d^1$, we have constructed the Fatou tree $Y^1(f)$ of level $1$ and its limit boundary $Y^1_\infty(f)$. 

For each $k\geq1$, we can inductively construct the Fatou tree $Y^k(f)$ of level $k$ from $Y^{k-1}(f)$ similar to the construction of $Y^1(f)$ from $Y^0(f)$.  We repeat the procedure as follows. First, let $$\mathcal{Y}_d^k=\{f\in\mathcal{Y}_d^{k-1}\mid
\text{$Y^{k-1}_\infty(f)$ contains critical points or parabolic points}\}.$$ 

Let $f\in\mathcal{Y}_d^k$, and let $U^k_0(f) = Y^{k-1}(f)\setminus Y^{k-1}_\infty(f)$. 
If $Y^{k-1}_\infty(f)$ contains no parabolic point, set $n^k(f)=0$; otherwise, there are parabolic periodic Fatou components growing outside as in \S\ref{level-1-construction}, with hierarchy $U^k_1(f),\dots,U^k_{n^k(f)}(f)$. Define
$$X^k_0(f)=\bigcup_{j=0}^{n^k(f)} \overline{U^k_j(f)}.$$ For each $n\geq 0$, define inductively $X_{n+1}^k(f)$ to be the connected component of $f^{-1}(X_n^k(f))$ containing $X_n^k(f)$. Then $$X_0^k(f)\subset X_1^k(f)\subset X_2^k(f)\subset\cdots.$$ 

As a natural generalization of Fatou components, the \emph{$k$-tree components} are defined to be the connected components of $f^{-n}(U^k_j(f))$. The \emph{limit point set} of a $k$-tree component $U$ is defined to be $\overline{U}\setminus U$. Each $X_n^k(f)$ is the closure of a finite union of $k$-tree components, of which any two touch at one limit point at most. 

Let $$Y^k(f)=\overline{\bigcup_{n\in\mathbb{N}} X_n^k(f)}$$ be the \emph{Fatou tree of level $k$}. 
Let $$Y_\infty^k(f)= Y^k(f)\setminus\bigcup_{n\in\mathbb{N}} X_n^k(f)$$ be the set of all limit points on $Y^k(f)$. 
See Notation \ref{notation-tree-k} for a list of notations related to $Y^k(f)$. 

\begin{example}
\label{example-crit}
Consider the one-parameter family of quartic polynomials $$f_c(z) = 2c c' z^2 - (4/3)(c+c')z^3 + z^4, \ c\in\mathbb{C},\ c(c^3-2)\neq 0,$$
where $c' = (c^6 - 2c^3 + 3)/(2c^2(c^3 - 2))$. 
Then $0,c,c'$ are critical points of $f_c$. We have $f_c(0)=0$, $f_c^2(c)=f_c(c)$ (i.e. $f_c(c)$ is a fixed point) and $c'$ is the free critical point. See Figure \ref{fig-critical}. 
The first two polynomials have Fatou trees of level 2, and the last one has Fatou trees of level at most one. 
\begin{figure}[htbp]
\centering
\includegraphics{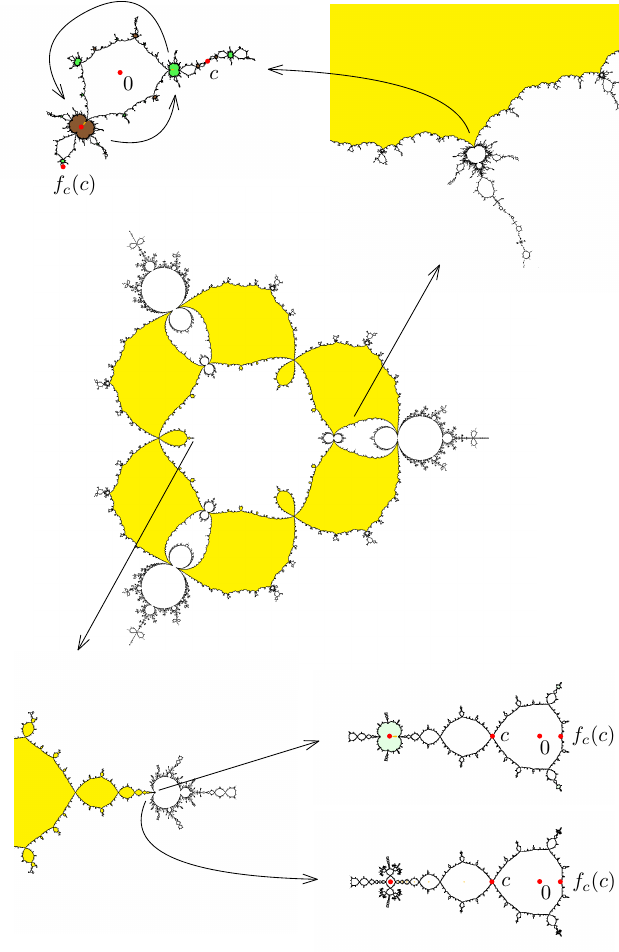}
\caption{The bifurcation locus of $\{f_c\mid c\in\mathbb{C}, c(c^3-2)\neq 0\}$ and three Julia sets with marked critical orbits. A polynomial $f_c$ with $c$ in the closure of the yellow part is a candidate for $\partial_{\rm reg}\mathcal H_4$.}
\label{fig-critical}
\end{figure}
\end{example}



%
%
%
%
%
%
%
%

\begin{proof}
[Proof of Theorem \ref{thm-tree-level-k}]
For $k=0$, the results follow from \cite{RY} and Lemma \ref{iff-diam-0}. 
For $k\geq1$, the proof use the same idea as the case $k=1$, and induction. 

See Proposition \ref{Y1-lc} for Property 1 and Corollary \ref{coro-limb} for Property 2. 
For the preperiodic case of Property 3, see Theorem \ref{dynam-prop}(\ref{dynam-prop-prep}) and Lemma \ref{U-prep}(\ref{U-prep-repel-parab}); for the wandering case of Property 3, see Corollary \ref{ray-num}. 
\end{proof}

Let $f\in \mathcal{Y}_d^k$ for some $k\geq0$. 
Note that $$2\leq \deg(f|_{Y^0(f)^\circ})<\deg(f|_{Y^1(f)^\circ})<\cdots<\deg(f|_{Y^k(f)^\circ})\leq d.$$
We have $k\leq d-2$. That is, $\mathcal{Y}_d^k=\emptyset$ when $k>d-2$. 

Let $f\in\mathcal{Y}_d^0$. Since 
$$\mathcal{Y}_d^0\supset\mathcal{Y}_d^1\supset\cdots\supset\mathcal{Y}_d^{d-2}\supset\mathcal{Y}_d^{d-1}=\emptyset,$$
there is a unique $0\leq k(f)\leq d-2$ such that $f\in \mathcal{Y}_d^{k(f)}\setminus \mathcal{Y}_d^{k(f)+1}$. 
Then $$Y^0(f) \subset Y^1(f)\subset\cdots \subset Y^{k(f)}(f) \subset K(f),$$
and $Y_\infty^{k(f)}(f)$ has neither parabolic point nor critical point. 
We call $Y^{k(f)}(f)$ the \emph{maximal Fatou tree} of $f$.

\begin{lemma}
\label{one-ray}
Let $f\in\mathcal{P}_d$ with an attracting fixed point at $0$. If $Y^{k(f)}(f)= K(f)$, then for any $y\in Y^{k(f)}_\infty(f)$, there is exactly one external ray landing at $y$. 
\end{lemma}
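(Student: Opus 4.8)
The plan is to read off the number of external rays landing at a point of $Y^{k(f)}_\infty(f)$ from the limb decomposition of $K(f)$ around the maximal Fatou tree, exploiting that this decomposition degenerates completely under the hypothesis $Y^{k(f)}(f)=K(f)$.

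First I would record that $Y^{k(f)}(f)=K(f)$ forces every limb to be trivial. Indeed, Theorem~\ref{thm-tree-level-k}(\ref{thm-tree-level-k-limb})(a) (resp.\ Theorem~\ref{RY} when $k(f)=0$) provides a decomposition $K(f)=Y^{k(f)}(f)^{\circ}\sqcup\bigsqcup_{y\in\partial Y^{k(f)}(f)}L_{y}$ with each $L_{y}$ a continuum meeting $Y^{k(f)}(f)$ only at $y$. Since $K(f)=Y^{k(f)}(f)=Y^{k(f)}(f)^{\circ}\sqcup\partial Y^{k(f)}(f)$, comparing the two decompositions gives $\bigsqcup_{y}L_{y}=\partial Y^{k(f)}(f)\subset Y^{k(f)}(f)$, whence $L_{y}\subset Y^{k(f)}(f)$ and therefore $L_{y}=L_{y}\cap Y^{k(f)}(f)=\{y\}$ for every $y\in\partial Y^{k(f)}(f)$.

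Next, fix $y\in Y^{k(f)}_\infty(f)$. By the fine limb decomposition of $K(f)$ for $Y^{k(f)}(f)$ (the level-$k(f)$ analogue of \eqref{Kf-Y1f}; resp.\ Theorem~\ref{RY} when $k(f)=0$), the limit point $y$ cannot lie in $Y^{k(f)}(f)^{\circ}$ nor in a limb rooted at a point of $\partial Y^{k(f)}(f)\setminus Y^{k(f)}_\infty(f)$, since such a limb meets $Y^{k(f)}(f)$ only at its root and the root is not a limit point; hence $y$ lies in $L_{\mathbf U}(f)$ for a unique branch $\mathbf U=\{U_{n}\}_{n\in\mathbb N}\in B^{k(f)}(f)$ (when $k(f)=0$, $y\in\partial U_{f}(0)$), and by the previous paragraph $L_{\mathbf U}(f)=\{y\}$. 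I would then conclude in either of two equivalent ways. Directly: $\mathbf U$ is preperiodic or wandering under $\sigma_{f}^{k(f)}$, and since $L_{\mathbf U}(f)$ is reduced to $\{y\}$, part~(\ref{U-prep-separate}) of Lemma~\ref{U-prep} or part~(\ref{U-wand-separate}) of Lemma~\ref{U-wand} (at level $k(f)$; resp.\ Theorem~\ref{RY}(\ref{RY-separate}) when $k(f)=0$) asserts that exactly one external ray lands at $y$. Alternatively, avoiding the separation statements: by \cite[Corollary~6.7]{McM} the number of external rays landing at $y$ equals the number of connected components of $K(f)\setminus\{y\}=Y^{k(f)}(f)\setminus\{y\}$; and $L_{\mathbf U}(f)=\{y\}$ means $\bigcap_{n}\overline{S_{f}(U_{n})}\cap Y^{k(f)}(f)=\{y\}$, so $Y^{k(f)}(f)\setminus\{y\}=\bigcup_{n}\big(Y^{k(f)}(f)\setminus S_{f}(U_{n})\big)$ is a nested increasing union of connected sets (the connectedness of each $Y^{k(f)}(f)\setminus S_{f}(U_{n})$ being exactly what is verified in the last paragraph of the proof of Corollary~\ref{ray-num}), hence connected; combined with the lower bound from Theorem~\ref{thm-tree-level-k}(\ref{thm-tree-level-k-limb})(b) this again yields exactly one ray.

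The step that needs care is bookkeeping rather than mathematics: Lemmas~\ref{U-prep} and~\ref{U-wand}, the explicit form~\eqref{Kf-Y1f} of the limb decomposition, and Corollary~\ref{ray-num} are spelled out only at level $1$, so one must invoke their level-$k$ counterparts, which are produced by the identical arguments inside the inductive proof of Theorem~\ref{thm-tree-level-k}; no new dynamical input is required beyond what already appears in that induction.
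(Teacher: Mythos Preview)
Your proof is correct and takes essentially the same route as the paper: both deduce that the limb at $y$ is trivial from $Y^{k(f)}(f)=K(f)$ and then invoke the ``trivial limb $\Rightarrow$ one ray'' clause of the level-$k(f)$ analogues of Lemmas~\ref{U-prep}/\ref{U-wand}. The paper's only organizational difference is that it first records that $Y^{k(f)}_\infty(f)$ contains no critical or parabolic point, uses Theorem~\ref{thm-tree-level-k}(\ref{thm-tree-level-k-ray}) to dispatch the wandering case via the degree product $\prod_n\deg(f,f^n(y))=1$, and then appeals to the argument of Lemma~\ref{U-prep} only for the residual repelling periodic case.
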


\begin{proof}
Note that $Y_\infty^{k(f)}(f)$ contains neither parabolic point nor critical point. 
By Theorem \ref{thm-tree-level-k}(\ref{thm-tree-level-k-ray}), we just need to consider the case that $y$ is a repelling periodic point. 

Since $Y^{k(f)}(f)= K(f)$, we have $L_y(K(f), Y^{k(f)}(f))=\{y\}$. 
By the same argument as in Lemma \ref{U-prep}, there is only one external ray landing at $y$. 
\end{proof}

\subsection{The maximal Fatou tree is a renormalization}
\label{subsec-tree-kf}

In this subsection, we will prove Theorem \ref{Ymax=Kf}: for any $f\in \partial_{\rm reg}\mathcal H_d$, we have $Y^{k(f)}(f)= K(f)$. 
We begin with a lemma that generalizes Corollary \ref{coro-poly-like}. 

%
%
%

\begin{lemma} 
\label{ren-maximal} 
For any $f\in\mathcal{Y}_d^0$, there is a polynomial-like renormalization $f: U \to V$ with filled Julia set $K(f|_U)=Y^{k(f)}(f)$. 
\end{lemma}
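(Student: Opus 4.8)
The plan is to promote the polynomial-like restriction from Corollary \ref{coro-poly-like} (which only handled level $0$, i.e. $\overline{U_f(0)}$) to the maximal Fatou tree by removing exactly those limbs of $K(f)$ for $Y^{k(f)}(f)$ that carry no critical point. Write $k=k(f)$ and $Y=Y^k(f)$. By Theorem \ref{thm-tree-level-k}(\ref{thm-tree-level-k-limb}) we have the limb decomposition $K(f)=Y^\circ\sqcup\bigsqcup_{y\in\partial Y}L_y$, and since $Y_\infty^k(f)$ contains no parabolic and no critical point, every root $y\in\partial Y$ is preperiodic-repelling or wandering, and for a nontrivial limb $L_y$ finitely many sectors with root $y$ separate $L_y$ from $Y$. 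The first step is to let $A$ be the set of roots of nontrivial limbs, and for each $y\in A$ collect the finite family of sectors $\mathcal A_y$ separating $L_y$ from $Y$; the critical ones are those $y$ for which some sector in $\mathcal A_y$ meets $\crit(f)$ (call this set $C$), together with their forward orbits $P=\{f^n(y)\mid y\in C,\ n\ge1\}$ and $Q=f^{-1}(P)\cap Y_\infty^k(f)$. The key combinatorial fact, exactly as in Corollary \ref{coro-poly-like}, is that $Q$ is finite: a critical sector can only appear finitely often in a wandering orbit (its angular width grows exponentially once it avoids critical points, cf. the argument in Corollary \ref{ray-num}), so only finitely many $y$'s are critical, hence $P$ and $Q$ are finite; and every root $y\in A$ eventually maps into $Q$ by Theorem \ref{thm-tree-level-k}(\ref{thm-tree-level-k-limb})(b) (a nontrivial limb must absorb a critical point along its orbit, the analogue of Theorem \ref{dynam-prop}(\ref{dynam-prop-crit})).

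The second step is the thickening construction. Fix $r>1$ and set
$$W_Q=\bigl(\mathbb C\setminus B_f^{-1}(\mathbb C\setminus r\mathbb D)\bigr)\setminus\bigcup_{y\in Q}\ \bigcup_{S\in\mathcal A_y}\overline S,$$
and define $W_P$ the same way with $r^d$ and $P$ in place of $r$ and $Q$ (using $f(P)\subset P$ and $f(Q)\cap Q\supset P$ appropriately so that $f:W_Q\to W_P$ is proper, of degree $\deg(f|_{Y^\circ})$). Since every point of $Q$ is pre-repelling, Milnor's thickening technique (\cite{Mil-lc}) extends the proper map $f:W_Q\to W_P$ to a polynomial-like map $f:U\to V$ with $Y\subset U$ and $U\cap\crit(f)=Y\cap\crit(f)$ — here one uses that all the critical points of $f$ lie in $Y$ (true since $k=k(f)$ and $Y_\infty^k(f)$ contains no critical point, so no further Fatou tree can grow and all finite critical points are already trapped inside $\bigcup_n X_n^k(f)\subset Y^\circ$, except possibly some on $\partial$ of $k$-tree components, which still lie in $Y$).

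The third step identifies the small filled Julia set. Clearly $Y\subset K(f|_U)$ since $Y$ is forward-invariant and compact inside $U$. Conversely, by construction every point in a sector $S\in\mathcal A_y$ with $y\in Q$ escapes under iteration of $f|_U$; and for $y\in A\setminus Q$, by the minimality argument there is a least $n\ge1$ with $f^n(y)\in Q$, so points in the sectors of $\mathcal A_y$ eventually map into an escaping sector and hence escape as well. Since $K(f)\setminus Y^\circ=\bigsqcup_{y\in A}L_y$ and each nontrivial $L_y$ sits inside $\bigcup_{S\in\mathcal A_y}\overline S$, every point of $K(f)\setminus Y$ escapes under $f|_U$, and every point outside $K(f)$ escapes a fortiori. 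Therefore $K(f|_U)=Y=Y^{k(f)}(f)$, which is the assertion.

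\textbf{Main obstacle.} The delicate point is the finiteness of $Q$ together with the claim that \emph{every} nontrivial limb eventually maps to one rooted in $Q$: this is the analogue of Theorem \ref{dynam-prop}(\ref{dynam-prop-crit}) but now at the level of the whole Fatou tree $Y$ rather than a single Jordan Fatou component, and in the wandering case it requires the exponential-growth-of-sector-width argument (as in Corollary \ref{ray-num}) to rule out a nontrivial limb whose orbit avoids critical sectors forever. A secondary technical nuisance is checking that the family $\bigcup_{y\in Q,S\in\mathcal A_y}\overline S$ and the outer equipotential genuinely bound a domain on which $f$ is proper of the right degree, so that thickening applies verbatim — but this is exactly parallel to Corollary \ref{coro-poly-like} and should go through with only bookkeeping changes.
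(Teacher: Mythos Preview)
Your overall strategy matches the paper's proof: remove the nontrivial limbs of $K(f)$ for $Y=Y^{k(f)}(f)$ by cutting along finitely many sectors rooted at pre-(repelling or parabolic) points, thicken to get a polynomial-like restriction, and then check $K(f|_U)=Y$. However, two of your supporting claims are wrong and would break the argument as written.

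\textbf{(1) It is false that all critical points lie in $Y$.} Your justification ``$k=k(f)$ and $Y_\infty^k(f)$ contains no critical point, so \ldots\ all finite critical points are already trapped inside $Y$'' is a misreading of maximality. The definition of $k(f)$ only says $Y_\infty^{k(f)}(f)$ is free of critical and parabolic points; it says nothing about the limbs $L_y$ for $y\in\partial Y$, which may well contain critical points (this is precisely why the sets $C$, $P$, $Q$ are nonempty in the interesting case $Y\neq K(f)$). The correct reason for $U\cap\crit(f)=Y\cap\crit(f)$ is the one the paper gives: every critical point outside $Y$ sits in one of the removed sectors in $\mathcal C$, hence outside $W_Q$, hence outside $U$ after thickening. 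If your claim were true, there would be no critical sectors at all and hence (by your own orbit-absorbs-a-critical-point argument) no nontrivial limbs, forcing $Y=K(f)$ and making the lemma vacuous.

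\textbf{(2) Roots need not be pre-repelling.} You assert ``every root $y\in\partial Y$ is preperiodic-repelling or wandering'' and later ``every point of $Q$ is pre-repelling''. But roots $y\in\partial Y\setminus Y_\infty^{k}(f)$ can be pre-parabolic (Theorem~\ref{thm-tree-level-k}(\ref{thm-tree-level-k-ray}) allows this), and the paper in fact proves that every root of a nontrivial limb is pre-repelling \emph{or pre-parabolic}. This matters for the thickening: one must observe, as the paper does, that a sector in $\mathcal Q$ with parabolic periodic root lies in a \emph{repelling} petal direction of that parabolic point, so Milnor's thickening still applies. Relatedly, your $Q=f^{-1}(P)\cap Y_\infty^k(f)$ should be intersected with $\partial Y$ (or with the set $A$ of roots), not with $Y_\infty^k(f)$; and the paper finds it cleaner to index everything by sectors ($\mathcal A,\mathcal C,\mathcal P,\mathcal Q$) rather than by their roots, which also streamlines the claim $d\theta\not\equiv d\theta'$ needed to push sectors forward injectively.
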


\begin{proof}
Write $Y=Y^{k(f)}(f)$ and $Y_\infty=Y^{k(f)}_\infty(f)$. 
Then $Y_\infty$ contains no critical point and no parabolic point. 
If $Y = K(f)$, the construction is trivial. In the following, we assume $Y \neq K(f)$. 
To make $Y$ into a small filled Julia set, we need to remove the nontrivial limbs. 

Let $y\in\partial Y$ with $L_y(K(f),Y)\neq\{y\}$. 
By Theorem \ref{thm-tree-level-k}(\ref{thm-tree-level-k-limb}), there are finitely many minimal sectors $S_1,\dots, S_n$ with root $y$ such that $L_y(K(f),Y)=K(f)\cap\bigcup_{1\leq j\leq n} \overline{S_j}$. Here we say a sector is minimal if it contains no external ray landing at its root. Let $\mathcal{A}_y$ denote the family of these minimal sectors. 

We claim that $d\theta\not\equiv d\theta'\pmod{\mathbb Z}$ for any $S_f(\theta,\theta')\in\mathcal{A}_y$. 
If $y\in Y_\infty$, this is true because $Y_\infty$ contains no critical point. 
If $y\in\partial Y\setminus Y_\infty$, assume $f(R_f(\theta)) = f(R_f(\theta'))$; let us find a contradiction. Since $y\in\partial Y\setminus Y_\infty$, we have $y\in X_n^{k(f)}(f)$ for some $n$ large enough. By $f(R_f(\theta)) = f(R_f(\theta'))$, both $X_{n+1}^{k(f)}(f)\cap S_f(\theta,\theta')$ and $X_{n+1}^{k(f)}(f)\cap S_f(\theta',\theta)$ are nonempty. Then $X_{n+1}^{k(f)}(f)\cap S_f(\theta,\theta')\neq\emptyset$ implies $Y\cap S_f(\theta,\theta')\neq\emptyset$. This is a contradiction. Furthermore, we have 
\begin{align*}
&\mathcal A_{f(y)} = \{S_f(d\theta,d\theta')\mid S_f(\theta,\theta')\in\mathcal A_y\},\\
&\#(\mathcal A_y)=\#(\mathcal A_{f(y)})\cdot\deg(f,y).
\end{align*}
Then $\crit(f)\cap \bigcup_{n\in\mathbb{N}} S_f(d^n\theta, d^n\theta') \neq \emptyset$; otherwise, the angular difference between $d^n\theta$ and $d^n\theta'$ grows exponentially as $n\rightarrow \infty$, which is impossible. 
Since there are at least two external rays landing at $f^n(y)$ for any $n\geq0$, the point $y$ is pre-repelling or pre-parabolic by Theorem \ref{thm-tree-level-k}(\ref{thm-tree-level-k-ray}).
 
Let 
\begin{align*}
\mathcal{A} &= \bigcup_{y\in\partial Y,\ L_y(K(f),Y)\neq\{y\}}\mathcal{A}_y, \\
\mathcal{C} &= \{S\in \mathcal{A}\mid S\cap \crit(f)\neq\emptyset\},\\
\mathcal{P} &= \{S_f(d^n\theta,d^n\theta')\mid S_f(\theta,\theta')\in\mathcal C, n\geq1\},\\
\mathcal{Q} &= \{S_f(\theta,\theta')\in\mathcal{A}\mid S_f(d\theta,d\theta')\in\mathcal P\}. 
\end{align*}
Then $\mathcal{Q}$ is finite and $\mathcal P\subset \mathcal Q$. Given $r>1$. Let $$W_\mathcal{Q} = (\mathbb{C}\setminus B_f^{-1}(\mathbb{C}\setminus r\mathbb{D}))\setminus \bigcup_{S\in\mathcal{Q}}\overline{S}.$$
Replacing $r$ and $\mathcal Q$ by $r^d$ and $\mathcal P$ respectively gives $W_\mathcal{P}$. 

By the thickening technique \cite{Mil-lc}, the proper map $f: W_\mathcal{Q}\to W_\mathcal{P}$ extends to a polynomial-like map $f:U\to V$ such that $U\cap\crit(f) = Y\cap \crit(f)$ and $Y\subset U$. Note that $Y_\infty$ contains no parabolic point. Every $S\in \mathcal{Q}$ with parabolic periodic root is in a repelling direction of this parabolic point, so the thickening procedure is still valid in this case. See Figure \ref{fig-polynomial-like}. 

\begin{figure}[ht]
\centering
\includegraphics{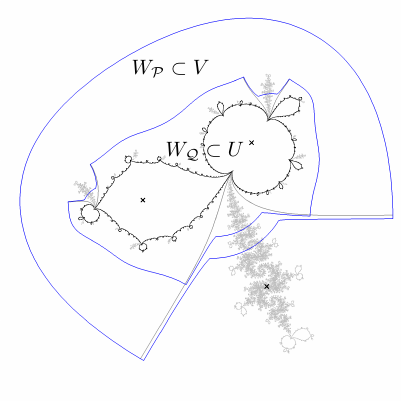}
\caption{The polynomial-like map $f:U\to V$}
\label{fig-polynomial-like}
\end{figure}

To finish we show $K(f|_U)=Y$. It follows from $f(Y) = Y$ that $Y\subset K(f|_U)$. 
If $S\in\mathcal{Q}$, by the construction of $f:U \to V$, every point in $S$ escapes under $f|_U$. 
If $S\in\mathcal{A}\setminus\mathcal{Q}$, there is a minimal $n\geq1$ such that $f^n(S)\in \mathcal{Q}$, so the points in $S$ also escape under $f|_U$. 
Therefore $K(f|_U)\subset K(f)\setminus\bigcup_{S\in\mathcal{A}} S = Y$, and then $K(f|_U)=Y$. 
\end{proof}

The above construction implies $\deg(f|_U)<d$ if $Y^{k(f)}(f)\neq K(f)$. Generally, we have the following fact. 

\begin{lemma}
\label{deg-less-d}
Let $f$ be a polynomial with degree at least two. If $f:U\to V$ is a polynomial-like restriction with $K(f|_U)\neq K(f)$, then $\deg(f|_U)<\deg(f)$. 
\end{lemma}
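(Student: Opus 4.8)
The plan is to compare the global mapping degree of $f$, which equals $\deg(f)$ on all of $\mathbb{C}$, with the local contribution coming from the polynomial-like restriction, and to show that some preimage of a point in $K(f|_U)$ must lie outside $U$. Write $d = \deg(f)$ and $d' = \deg(f|_U)$. The key observation is that $K(f|_U)$ is a full compact set strictly contained in $K(f)$, hence strictly contained in $\mathbb{C}$; since $K(f)$ is connected (otherwise there is nothing to prove because a polynomial-like restriction with $d'=d$ would force $K(f|_U)=K(f)$) and $K(f|_U)\neq K(f)$, the set $K(f)\setminus K(f|_U)$ is nonempty, and in fact one can find a point $w\in K(f)$ with $w\notin \overline{U}$, because $K(f|_U)=\bigcap_{n\ge 0}f^{-n}(V)\cap U$ and any point of $K(f)$ not in this intersection eventually escapes $U$ under iteration but never escapes $K(f)$.

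First I would fix a generic value $a$ in the interior of $K(f|_U)$ avoiding the (finite) critical value set of $f$, so that $a$ has exactly $d$ preimages under $f$ in $\mathbb{C}$, counted without multiplicity. Exactly $d'$ of these lie in $U$, by definition of the polynomial-like degree applied to the proper map $f:U\to V$ (note $a\in K(f|_U)\subset V$, and the preimages of $a$ under $f|_U$ number $d'$ when $a$ is not a critical value). Then I would argue that the remaining $d-d'$ preimages lie outside $U$, and that this remainder is nonzero: if all $d$ preimages of $a$ lay in $U$, then by pulling back we would get $f^{-n}(a)\subset U$ for all $n$, forcing the closure of $\bigcup_n f^{-n}(a)$ — which is all of $J(f)$ — to lie in $\overline{U}$; combined with $a\in K(f|_U)^\circ$ this would give $K(f)\subset \overline{U}$ and, chasing the definition of $K(f|_U)$ as the non-escaping set, $K(f)=K(f|_U)$, contradicting the hypothesis. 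Hence $d-d' \ge 1$, i.e. $\deg(f|_U)<\deg(f)$.

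The main obstacle I anticipate is making rigorous the step ``some preimage of $a$ escapes $U$,'' i.e. ruling out $f^{-1}(a)\subset U$ cleanly. The cautious way is: suppose $f^{-1}(K(f|_U))\subset U$; since also $f(K(f|_U)) = K(f|_U)\subset U$, the set $K(f|_U)$ would be both forward and (totally) backward invariant under $f$ as a subset of $U$, so $f^{-n}(K(f|_U))\subset U$ for all $n\ge 0$; but $f^{-n}(z_0)$ for any $z_0\in K(f|_U)\subset J(f)$ is dense in $J(f)$, so $J(f)\subset\overline{U}$, and then the bounded component structure forces $K(f)=\overline{U}\cap(\text{non-escaping set})=K(f|_U)$, a contradiction. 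Therefore $f^{-1}(K(f|_U))\not\subset U$, which already shows that the proper map $f:U\to V$ cannot account for all $d$ preimages of a generic point of $K(f|_U)\subset V$; counting preimages with multiplicity then yields $d' < d$. One should double-check the edge case where $J(f)$ is disconnected, but there the polynomial-like filled Julia set $K(f|_U)$ lies in a single bounded Fatou-adjacent piece and $\deg(f|_U)<\deg(f)$ is immediate from a degree count on the escaping dynamics.
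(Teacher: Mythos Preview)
Your approach is essentially the same as the paper's: assume $\deg(f|_U)=\deg(f)$, observe that every global $f$-preimage of a point in $K(f|_U)$ then lies in $U$ and hence in $K(f|_U)$, and invoke density of backward orbits in $J(f)$ to force $J(f)\subset K(f|_U)$, hence $K(f)=K(f|_U)$. The paper does this in three lines by working directly with a point $z\in J(f|_U)$, which sidesteps your detour through ``a generic value $a$ in the interior of $K(f|_U)$'' (problematic, since that interior may be empty) and the disconnected-$J(f)$ edge case (irrelevant: the backward-orbit density argument needs no connectivity hypothesis). Note also the slip ``$z_0\in K(f|_U)\subset J(f)$'': you mean $z_0\in J(f|_U)\subset J(f)$, or simply that $z_0$ is any non-exceptional point of $K(f|_U)$.
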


\begin{proof}
Assume $K(f|_U)\neq K(f)$ and $\deg(f|_U)=\deg(f)$; we will find a contradiction. 
Since $\deg(f|_U)=\deg(f)$, for any $z\in J(f|_U)$, we have $f^{-1}(z)\subset J(f|_U)$. Note that the inverse orbit of any $z\in J(f)$ is dense in $J(f)$. It follows that $J(f)\subset J(f|_U)$, and so $J(f)= J(f|_U)$. This contradicts $K(f|_U)\neq K(f)$. 
\end{proof}

\begin{proof}
[Proof of Theorem \ref{Ymax=Kf}]
Let $f\in \partial_{\rm reg}\mathcal H_d$ and suppose $Y^{k(f)}(f)\neq K(f)$; we will find a contradiction. By Lemma \ref{ren-maximal}, there is a polynomial-like map $f:W\to V$ with filled Julia set $K(f|_W)=Y^{k(f)}(f)$. 
By Lemma \ref{deg-less-d}, we have $\deg(f|_W)<d$. 

Furthermore, we require that $\partial W\cap \crit(f)=\emptyset$. Then there is a neighborhood $\mathcal N$ of $f$ so that for all $g\in \mathcal N$, we have $|g'(0)|<1$, the component $W_g$ of $g^{-1}(V)$ containing $0$ is compactly contained in $V$, and $g: W_g\to V$ is a polynomial-like map of degree $\deg(g|_{W_g})=\deg(f|_W)<d$. 
Let $U_g$ (resp. $U'_g$) denote the Fatou component of $g$ (resp. $g|_{W_g}$) containing $0$. 
Then $U_g = U'_g$. When $g\in \mathcal N\cap \mathcal H_d$, we have $$
\deg(g|_{W_g}) 
\geq \deg(g|_{U'_g}) 
= \deg(g|_{U_g}) 
= d.$$ This is a contradiction. 
Therefore $Y^{k(f)}(f) = K(f)$.  

Finally, the second assertion follows from Lemma \ref{one-ray}. 
\end{proof}

\begin{proof}
[Proof of Theorem \ref{J-lc}]
By Theorems \ref{Ymax=Kf} and \ref{thm-tree-level-k}(\ref{thm-tree-level-k-lc}). 
\end{proof}

\subsection{A counterexample}
\label{subsec-example}

Let $$\mathcal Y_d=\{f\in\mathcal Y_d^1\mid Y^{k(f)}(f)=K(f)\}.$$
By Theorem \ref{Ymax=Kf}, we have $ \partial_{\rm reg}\mathcal H_d\subset \mathcal Y_d$. But the converse may not hold. To see this, we will give examples of polynomials in $\mathcal Y_4\setminus \partial_{\rm reg}\mathcal H_4$. Roughly speaking, for a polynomial $f\in\mathcal{P}_d$, the condition $Y^{k(f)}(f)=K(f)$ is necessary but not sufficient for $f\in\partial_{\rm reg}\mathcal H_d$. 

\begin{example}
\label{example-resit}
Consider the one-parameter family of quartic polynomials $$f_a(z) = \left(a^2+\frac{2}{a}\right)z^2 - \left(2a+\frac{1}{a^2}\right)z^3 + z^4, \ a\in\mathbb{C}\setminus\{0\}.$$ Then $f_a(a)=a$ and $f_a'(a)=1$. 
If $a^3 = 1$, then $a$ is a triple fixed point of $f_a$. 
If $a^3\neq 1$, then $a$ is a double fixed point of $f_a$, and $$\res(f_a,a) = 1 - \frac{2}{a^3-1} - \frac{1}{(a^3-1)^2}.$$ For basic facts of the r\'esidu it\'eratif, see \cite[\S12]{Mil}. In Figure \ref{fig-parabolic}, the three shaded figure eights correspond to $\Re(\res(f_a,a))<0$ (i.e. $a$ is parabolic attracting), and their cross points correspond to the roots of $a^3=1$. 

For $a$ with $\Re(\res(f_a,a))<0$ and $|a|<1$, the parabolic point $a$ is on the boundary of $U_{f_a}(0)$ and the parabolic basin attached at $a$ contains two critical points, as showing in the lower right corner of Figure \ref{fig-parabolic}. Clearly, $Y^{k(f_a)}(f_a) = K(f_a)$ in this case. It follows that $f_a\in\mathcal Y_4$. Since $a$ is parabolic attracting, a hyperbolic polynomial in $\mathcal P_4$ close to $f_a$ has two simple fixed points near $a$, and at least one of them is attracting, thus $f_a\notin\partial \mathcal H_4$. 

As comparison, the polynomial in the lower left corner of Figure \ref{fig-parabolic} has a parabolic repelling fixed point, and it is on the boundary of $\mathcal H_4$. 
\begin{figure}[ht]
\centering
\includegraphics{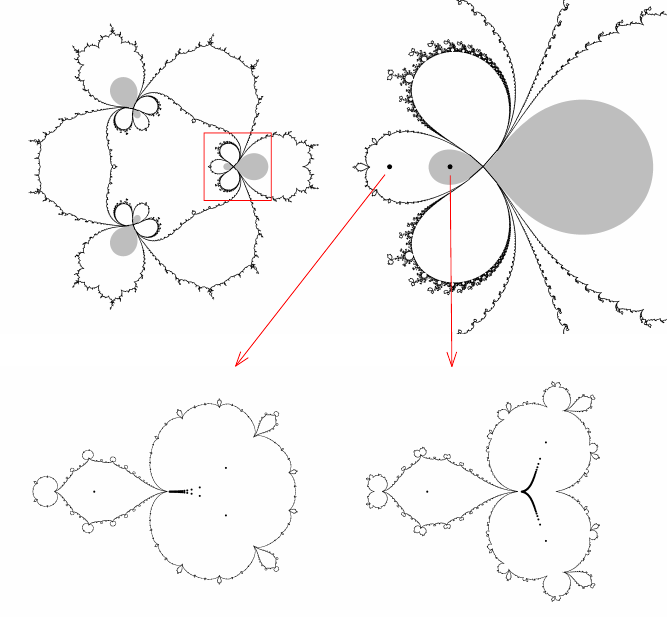}
\caption{The bifurcation locus of $\{f_a\mid a\in\mathbb{C}\setminus\{0\}\}$ and two Julia sets with marked critical orbits}
\label{fig-parabolic}
\end{figure}
\end{example}

\subsection{The parabolic case}
\label{subsec-parab}

Given $f\in\mathcal P_d$ with a parabolic fixed point at $0$ and connected Julia set. 
Let $U_f(0)$ be the union of all parabolic periodic Fatou components $U$ such that $\{f^n|_U\}_{n\in\mathbb{N}}$ tends to $0$. 
Then $U_f(0)$ consists of finitely many Jordan disks with a common boundary point $0$. 

If $U_f(0)$ has only one connected component, let $Y^0(f)=\overline{U_f(0)}$ be the Fatou tree of level $0$. 
Furthermore, if $\partial U_f(0)\setminus \{0\}$ contains critical points or parabolic periodic points, then we can define the Fatou tree $Y^1(f)$ of level $1$ as the Fatou tree of level $1$ for the attracting case. 

If $U_f(0)$ has at least two connected components, we can define the Fatou tree $Y^0(f)$ of level $0$ as the Fatou tree of level $1$ for the attracting case. 
(In this case, $\overline{U_f(0)}$ cannot be homeomorphic to the filled Julia set of any polynomial, but $Y^0(f)$ could potentially satisfy this property.)

Inductively, we will get the maximal Fatou tree $Y^{k(f)}(f)$. In the parabolic case, the maximal possibility of $k(f)$ is $d-1$, i.e. the number of critical points of $f$ in the plane. 

Using the same method, the analogues of Theorems \ref{J-lc} to \ref{thm-tree-level-k} still hold in the parabolic case. 

%
%
%

\section{Rigidity}
\label{sec-rigidity}

Let $f, g\in \mathcal P_d$ be two polynomials. We say $f$ and $g$ are \emph{hybrid conjugate}, if there is a homeomorphism $\phi: \mathbb C\to \mathbb C$, satisfying that
\begin{itemize}
\item $\phi\circ f(z)=g\circ \phi (z)$ for all $z\in \mathbb C$;

\item $\phi$ is holomorphic in the Fatou set. 
\end{itemize} 
Recall that 
$$\mathcal Y_d=\{f\in\mathcal Y_d^1\mid Y^{k(f)}(f)=K(f)\}.$$

\begin{theorem} 
\label{hybrid4Y}
Let $f, g\in \mathcal Y_d$. If $f$ and $g$ are hybrid conjugate, then they are affine conjugate. 
\end{theorem}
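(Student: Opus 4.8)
The plan is to promote the hybrid conjugacy $\phi$ first to a quasiconformal one and then to a conformal one. Once $\phi$ is known to be quasiconformal, its Beltrami coefficient $\mu_\phi=\overline\partial\phi/\partial\phi$ is an $f$-invariant Beltrami differential that vanishes a.e.\ on the Fatou set (where $\phi$ is holomorphic), hence is supported on $J(f)$; showing $\mu_\phi=0$ a.e.\ then makes $\phi$ a conformal automorphism of $\mathbb C$, i.e.\ affine, and the relation $\phi\circ f=g\circ\phi$ yields the claimed affine conjugacy. So there are exactly two things to establish: (i) $\phi$ is quasiconformal, and (ii) $J(f)$ carries no measurable invariant line field.

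For step (i) I would first match the puzzles of $f$ and $g$. Since $\phi$ restricts to a biholomorphism of the basin of infinity conjugating $z\mapsto z^d$ to itself, $B_g\circ\phi\circ B_f^{-1}$ is a rotation $w\mapsto\lambda w$ with $\lambda^{d-1}=1$; moreover $\phi$ is conformal on each bounded Fatou component and each $k$-tree component, and carries external rays, internal rays and equipotentials of $f$ to those of $g$. Hence, building the graph $\Gamma_f$ and the puzzle for $f$ exactly as in \S\ref{sect-puzzle} and declaring $\Gamma_g:=\phi(\Gamma_f)$ to be the graph for $g$ (a legitimate choice: all data entering $\Gamma_f$ --- a repelling cycle on $\partial U_f(0)$, the proper restriction $f\colon\Omega_0\to\Omega_0'$ capturing $U_f(0)\cap\crit f$, the internal arcs $\gamma_k$, and the equipotential arcs $E_U$ in parabolic components --- is carried by $\phi$ to analogous data for $g$), the map $\phi$ sends every depth-$n$ puzzle piece of $f$ homeomorphically onto a depth-$n$ puzzle piece of $g$, intertwining the dynamics. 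Because $f,g\in\mathcal Y_d$, we have $Y^{k(f)}(f)=K(f)$ and $Y^{k(g)}(g)=K(g)$, which are locally connected by Theorem \ref{thm-tree-level-k}(\ref{thm-tree-level-k-lc}); consequently the ends of all nests are singletons (Lemma \ref{U-prep} and Proposition \ref{wandering-end} together with Theorem \ref{RY}, and their higher-level analogues), so the two puzzles are refining sequences of partitions of the plane whose pieces shrink to points.

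Next I would apply the quasiconformality criterion established in the appendix (an analogue of \cite[Lemma 12.1]{KSS}), whose hypotheses I would verify as follows. First, $\phi$ conjugates the two shrinking puzzles. Second, on each puzzle piece disjoint from the critical orbits $\phi$ is either already conformal (on Fatou and $k$-tree pieces) or can be replaced --- keeping its boundary values --- by a $K$-quasiconformal interpolation with $K$ independent of the depth; this is where the paper's estimates enter, since one pulls a fixed interpolation back through the maps $f^n|_{P_n}$, whose degrees stay bounded by Lemma \ref{first-entry-time}, the bounded-degree Lemmas \ref{BD-1}--\ref{BD-reluctant}, the elevator condition (Lemmas \ref{BD2elevator}, \ref{elevator}), and, in the persistently recurrent case, the complex bounds of Theorem \ref{p-nest}(\ref{complex-bound}), which supply definite moduli between consecutive principal pieces so that dilatation does not accumulate (pulling a Beltrami differential back by a holomorphic map does not increase its norm). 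Third, the union of the critical pieces at depth $n$ --- those in $\bigcup_{\mathbf C\in\mathcal P_{\rm c}}C_n$ together with their inverse images --- has Lebesgue measure tending to $0$, since $K(f)$ is locally connected and the pieces shrink. The criterion then produces a $K'$-quasiconformal $\Phi\colon\mathbb C\to\mathbb C$ agreeing with $\phi$ on the boundary of every puzzle piece; since the pieces shrink to points, $\Phi=\phi$, so $\phi$ is quasiconformal.

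Finally, for step (ii), the coefficient $\mu:=\mu_\phi$ satisfies $f^*\mu=\mu$, $\|\mu\|_\infty<1$ and $\mu=0$ a.e.\ on the Fatou set, so it is supported on $J(f)$; if $J(f)$ has zero area then $\mu=0$ a.e., and otherwise $\mu$ would define a measurable $f$-invariant line field on a positive-area subset of $J(f)$, which is ruled out by \cite[Proposition 3.2]{Shen} (its hypotheses being furnished by the combinatorial structure above: shrinking puzzle pieces and bounded geometry along the critical nests). In either case $\mu=0$ a.e., so by Weyl's lemma $\phi$ is conformal, hence affine, and $\phi\circ f=g\circ\phi$ shows $f$ and $g$ are affine conjugate. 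The main obstacle is the uniform control of dilatation along deep nests in step (i): it is there that the complex bounds, the elevator condition, and the bounded-degree lemmas are all simultaneously needed, and it is precisely what the appendix's quasiconformality criterion is designed to package; the remaining parts are largely bookkeeping once \cite[Proposition 3.2]{Shen} is taken as a black box.
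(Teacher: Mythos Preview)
Your two-step architecture --- promote $\phi$ to quasiconformal, then kill its Beltrami coefficient via the absence of invariant line fields --- is exactly the paper's, and your use of \cite[Proposition 3.2]{Shen} for step (ii) matches Proposition~\ref{nilf}. The gap is in how you verify step (i). The criterion in the appendix (Proposition~\ref{criterion-qc}) is \emph{not} a ``replace $\phi$ by a $K$-qc interpolation on each puzzle piece and pass to the limit'' statement; it is a pointwise geometric criterion that asks for a decomposition $\Omega=X_1\cup X_2\cup X_3\cup X_4$ where on $X_1$ the circular distortion $\underline H(\phi,\cdot)$ is uniformly bounded, on $X_2$ one has $\operatorname{area}(\phi(X_2))=0$ together with $\underline H(\phi,\cdot)<\infty$, on $X_3$ there is a Markov family whose elements and whose $\phi$-images have uniformly bounded shape, and $X_4$ has $\operatorname{length}(\phi(X_4))=0$. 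The paper (Proposition~\ref{hybird2qc}) takes $X_1$ to be the Fatou set, $X_2$ the wandering Julia points with bounded-degree nests, $X_3$ the persistently recurrent points, and $X_4$ the preperiodic points; it then proves three claims: $\operatorname{area}(X_2)=\operatorname{area}(\phi(X_2))=0$ via the elevator condition and a density-point argument, $\underline H(\phi,x)<\infty$ for $x\in X_2$ via a Blaschke-product computation (this is where the bounded-degree and elevator lemmas enter), and uniform shape of the pulled-back principal pieces $P_{r_j+n_j}$ and $\phi(P_{r_j+n_j})$ for $x\in X_3$ via the complex bounds of Theorem~\ref{p-nest} together with Lemma~\ref{shape-distortion}.

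Your ``pull back a fixed qc interpolation through $f^n|_{P_n}$'' sketch does not fit this criterion and, more importantly, breaks down precisely on the persistently recurrent set: there the degrees $\deg(f^n|_{P_n})$ are unbounded, so you cannot produce a depth-independent interpolation by pullback, and the remark that ``pulling a Beltrami differential back by a holomorphic map does not increase its norm'' does not help since you are not pulling back $\mu_\phi$ but rather trying to manufacture a new qc map with prescribed boundary values. The complex bounds are used differently --- not to control dilatation of a constructed map, but to control the \emph{shapes} of $P_{r_j+n_j}$ and $\phi(P_{r_j+n_j})$ around $x$ and $\phi(x)$, which is exactly the $X_3$-hypothesis of Proposition~\ref{criterion-qc}. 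So the ingredients you list are the right ones, but they feed into a shape/distortion estimate, not an interpolation scheme.
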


\begin{proof}
The proof will be divided into two parts: Propositions \ref{hybird2qc} and \ref{nilf}. 
\end{proof}

\begin{proof}
[Proof of Theorem \ref{hybrid4H}]
By Theorems \ref{Ymax=Kf} and \ref{hybrid4Y}, $f$ and $g$ are affine conjugate. By the normalization $\phi'(\infty)=1$, the affine conjugation has the form $z\mapsto z+b$. Since $0$ is the only attracting fixed point of $f$ and $g$ (in the plane), we have $b=0$. Therefore $f=g$. 
\end{proof}

\subsection{Quasiconformality}

\begin{proposition}
\label{hybird2qc}
Let $f, g\in \mathcal Y_d$. Suppose $f$ and $g$ are hybrid conjugate under $\phi$. Then $\phi$ is quasiconformal. 
\end{proposition}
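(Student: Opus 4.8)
The plan is to establish quasiconformality of $\phi$ by combining a standard Bers–Royden / Sullivan-type argument on the Fatou set with a quasiconformality criterion of the type used in \cite{KSS} on the Julia set, glued via the limb/puzzle structure developed above. First I would record the easy part: on $\widehat{\mathbb C}\setminus K(f)$ the conjugacy is holomorphic by hypothesis, and the B\"ottcher coordinates conjugate $\phi$ to the identity near $\infty$, so $\phi$ is conformal on a neighborhood of infinity; on the bounded Fatou components, which are (pre)attracting or (pre)parabolic, $\phi$ carries linearizing/Fatou coordinates to linearizing/Fatou coordinates, hence is conformal there as well. Thus $\phi$ is holomorphic on all of $\mathbb C\setminus J(f)$, and the only obstruction to quasiconformality is the behavior across $J(f)$, which has measure issues precisely where $J(f)$ is ``thick''.

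Next I would set up the combinatorial skeleton. Since $f,g\in\mathcal Y_d$, we have $Y^{k(f)}(f)=K(f)$ and $Y^{k(g)}(g)=K(g)$, so by Lemma \ref{ren-maximal} (applied trivially) the whole filled Julia set is exhausted by the nested finite unions $X_n^{k}(f)$ of $k$-tree components together with the limb decompositions of Theorem \ref{thm-tree-level-k}. The conjugacy $\phi$ respects all of this structure: it sends Fatou components to Fatou components, external rays to external rays (it is the identity in B\"ottcher coordinates on the escaping set, but here $K(f)$ is the whole filled set so ``external'' means the complement), roots of sectors to roots of sectors, and hence each limb $L_y(K(f),Y^{k}(f))$ to the corresponding limb of $g$. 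In particular $\phi$ maps the puzzle partition $\Gamma$ and puzzle pieces $P_n$ of $f$ (built in \S\ref{sect-puzzle}) to a corresponding puzzle partition and pieces $P_n'$ of $g$ of the same depth, respecting depth and the dynamics. I would then modify $\phi$ on each bounded Fatou component and on a neighborhood of $\infty$ to make it a quasiconformal homeomorphism $\widehat\phi$ that still conjugates $f$ to $g$ on the relevant boundaries and equals $\phi$ on $J(f)$ (this is routine: interpolate a qc map between the two conformal boundary parametrizations of each Jordan-disk Fatou component, using the bounded geometry coming from Theorem \ref{thm-tree-level-k} that diameters of limbs tend to zero; there are only finitely many components above any fixed size, and the rescaled ones converge to round pictures).

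The heart of the matter is the quasiconformality criterion proved in the appendix (the analogue of \cite[Lemma 12.1]{KSS}): a homeomorphism that is uniformly quasiconformal off a nested sequence of combinatorially defined ``bad'' sets whose moduli of surrounding annuli are uniformly bounded below, and whose diameters shrink, is globally quasiconformal. I would verify its hypotheses as follows. For every point $z\in J(f)$ that is \emph{not} a nest-end of a persistently recurrent critical nest, the analysis of \S\ref{sect-wandering} (Lemmas \ref{BD-1}–\ref{BD-reluctant}, the elevator Lemma \ref{BD2elevator}, Lemma \ref{elevator}) produces, around the puzzle nest through $z$, infinitely many nested annuli of definite modulus together with uniformly bounded-degree pullback maps, and these transport to $g$ under $\phi$ with controlled distortion by the Gr\"otzsch inequality; for the persistently recurrent critical points one uses instead the principal nest and the complex bounds of Theorem \ref{p-nest}(\ref{complex-bound}). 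Pulling back the standard conformal structure through the bounded-degree, bounded-modulus telescopes and applying the criterion gives a uniform dilatation bound for $\widehat\phi$ on a full-measure subset of $J(f)$, hence on all of $\widehat{\mathbb C}$; then $\widehat\phi$ and therefore $\phi$ is quasiconformal.

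The main obstacle I anticipate is precisely the bookkeeping needed to check the hypotheses of the appendix criterion uniformly and simultaneously for \emph{all} points of $J(f)$ — reconciling the three regimes (preperiodic ends, wandering ends treated by elevators, persistently recurrent critical ends treated by principal nests) into one coherent nested family of bad sets with uniform moduli and shrinking diameters, and making sure the combinatorial isomorphism induced by $\phi$ genuinely matches depths so that the pulled-back annuli for $f$ and $g$ correspond. The Fatou-component interpolation and the conformality of $\phi$ off $J(f)$ are comparatively routine; the delicate point is that the ``bad set'' must simultaneously be small in measure (which is where one secretly needs an absence of invariant line fields, to be supplied separately in Proposition \ref{nilf}) — here, though, the criterion only asks for shrinking diameters and bounded surrounding moduli, so I would be careful to invoke exactly that and defer the measure-theoretic rigidity to the companion Proposition \ref{nilf}.
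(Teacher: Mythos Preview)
Your high-level plan matches the paper's: build the puzzle for $Y^{k(f)}(f)=K(f)$, split $J(f)$ into preperiodic points, wandering points with bounded degree, and persistently recurrent points, and feed everything into the appendix criterion (Proposition~\ref{criterion-qc}). But you mis-describe that criterion, and this hides the actual work. The criterion does \emph{not} say ``qc off a nested family of bad sets with shrinking diameter and definite surrounding modulus''. It asks for a four-part decomposition $\Omega=X_1\cup X_2\cup X_3\cup X_4$ with: a uniform bound $\underline H(\phi,\cdot)\le H$ on $X_1$; $\operatorname{area}(\phi(X_2))=0$ together with $\underline H(\phi,\cdot)<\infty$ pointwise on $X_2$; a Markov family $\mathcal F$ of nested pieces with \emph{uniform shape} for both $\mathcal F$ and $\phi(\mathcal F)$ covering $X_3$; and $\operatorname{length}(\phi(X_4))=0$. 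In particular your modification $\phi\mapsto\widehat\phi$ on the Fatou set is unnecessary: $\phi$ is already holomorphic there, so $\underline H(\phi,x)=1$ on $X_1=F(f)$ directly.

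The gap is that ``infinitely many nested annuli of definite modulus'' is what you already used in \S\ref{sect-wandering} to prove that ends are \emph{points}; it does not by itself give the $\underline H$ or shape control the criterion requires. For $X_2$ the paper proves two separate things: (i) $\operatorname{area}(X_2)=\operatorname{area}(\phi(X_2))=0$ via a Lebesgue density argument (pull back a small disk in the Fatou set through the bounded-degree elevator maps and use \cite[Lemma~2.3]{Shen}); and (ii) $\underline H(\phi,x)<\infty$ by a genuinely different argument---conjugating $f^{n_k}:P_{n_k}\to Q_0$ to a Blaschke product and finding radii $r_k$ so that $f^{n_k}(S^1(x,r_k))$ lands in a fixed round annulus $\{R_1<|w|<R_2\}$, then transporting this through $\phi$ and $g^{n_k}$ to bound $\operatorname{Shape}(\phi(B(x,r_k)),\phi(x))$. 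For $X_3$ the Markov family consists of the pulled-back principal-nest pieces $P_{r_j+n_j}$, and the uniform shape bound is obtained by iterating a square-root improvement $\operatorname{Shape}(C_{n_{j+1}},c)\le K(m,D)\,\operatorname{Shape}(C_{n_j},c)^{1/2}$ (from \cite{YZ,RYZ}) along the principal nest, then pulling back once more by $f^{r_j}$ with degree $\le d^{d-1}$. None of this is bookkeeping; it is the content of the proof, and your plan does not reach it.
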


\begin{proof}
The proof is based on the quasiconformality criterion stated in Proposition \ref{criterion-qc}. 

In \S\ref{sect-puzzle}, we construct a puzzle for $Y^1(f)$, where only the parabolic basins in $Y^1(f)$ are considered. 
Now for $Y^{k(f)}(f)=K(f)$, we let 
$$\Omega = \mathbb{C} \setminus  \bigcup\left\{\overline{\Omega_\infty},\overline{\Omega_0}, \overline{\Omega_U}\mid \text{parabolic periodic Fatou component $U\subset K(f)$}\right\},$$ 
and then define a graph $\Gamma$ in the same way. 
For $n\in\mathbb{N}$, a puzzle piece of depth $n$ is a connected component of $f^{-n}(\Omega\setminus \Gamma)$. 
A nest is a nested sequence $P_0\supset P_1\supset P_2\supset \cdots$ such that $P_n$ is a puzzle piece of depth $n$ for each $n\in\mathbb{N}$. 
Then the end $\bigcap_{n\in\mathbb{N}}\overline{P_n}$ of any nest is a singleton. 

Let $X_1$ be the Fatou set of $f$. 
Let $X_2$ (resp. $X_3$) consist of all wandering points in $J(f)$ with the bounded degree condition (resp. persistent recurrence condition). Let $X_4$ be the set of all preperiodic points of $f$. It is clear that $X_1$ and $X_4$ satisfy the conditions in Proposition \ref{criterion-qc}. Now let us verify the conditions about $X_2$ and $X_3$. 

\vspace{6pt}
\textbf{Claim 1.} $\operatorname{area}(X_2)=0$ and $\operatorname{area}(\phi(X_2))=0$. 
\vspace{6pt}

Let $x\in X_2$ and let $\mathbf{P}$ be the nest with end $\{x\}$. Then there exists a sequence $\{n_k\}_{k\in\mathbb{N}}$ tending to $\infty$ and a constant $D$ so that $\deg(f^{n_k}|_{P_{n_k}})\leq D$ for any $k\in\mathbb{N}$. 
By Lemma \ref{BD2elevator}, $\mathbf{P}$ also satisfies the elevator condition. 
After passing to a subsequence, we may assume $f^{n_k}(P_{n_k})$ is the same puzzle piece $Q_0$ and $f^{n_k}(x)$ tends to a point $y$ in $Q_0$. 
For each $k\in\mathbb{N}$, there are conformal maps $\alpha_k:Q_0\to\mathbb{D}$ with $\alpha_k(f^{n_k}(x)) = 0$ and $\beta_k:P_{n_k}\to\mathbb{D}$ with $\beta_k(x)=0$. Then $h_k:=\alpha_k\circ f^{n_k}\circ\beta_k^{-1}$ is a Blaschke product on the unit disk with $h_k(0)=0$ and $\deg(h_k)\leq D$. Furthermore, we can require $\alpha'_k(f^{n_k}(x)) > 0$. Then $\alpha_k$ converges uniformly to the conformal map $\alpha:Q_0\to\mathbb{D}$ with $\alpha(y)=0$ and $\alpha'(y)>0$. 

For $r\in(0,1)$, let $U_k(r)$ be the connected component of $(h_k\circ\beta_k)^{-1}(\mathbb{D}(r))$ containing $x$. 
By Lemma \ref{shape-distortion}, there is a constant $C=C(D,r)$ such that $$\operatorname{Shape}(U_k(r),x)\leq C.$$
Since $\alpha_k\to\alpha$ as $k\to\infty$, there exists a small disk $B(x_0,r_0)$ independent of $k$ and contained in $\mathbb{D}(r)\setminus \alpha_k(J(f))$. Then $$\frac{\operatorname{area}(\mathbb{D}(r)\cap \alpha_k(J(f)))}{\operatorname{area}(\mathbb{D}(r))}\leq 1-\frac{r_0^2}{r^2}=:1-\delta.$$ 
By \cite[Lemma 2.3]{Shen}, there is a constant $\varepsilon= \varepsilon(D,r,\delta)>0$ such that 
$$\frac{\operatorname{area}(U_k(r)\cap J(f))}{\operatorname{area}(U_k(r))}\leq 1-\varepsilon.$$
Letting $k\to\infty$, we see that $x$ is not a Lebesgue density point of $J(f)$. Hence $\operatorname{area}(X_2)=0$, and the same discussion gives $\operatorname{area}(\phi(X_2))=0$. 

\vspace{6pt}
\textbf{Claim 2.} $\underline{H}(\phi,x)<\infty$ for any $x\in X_2$. 
\vspace{6pt}

Let $\operatorname{dist}(z_1,z_2)=|z_1-z_2|$ be the Euclidean distance of $z_1$ and $z_2$ in $\mathbb{C}$. 
For a Jordan domain $U$, let $\operatorname{dist}_U(z_1,z_2)$ denote the hyperbolic distance of $z_1$ and $z_2$ in $U$. Furthermore, We define corresponding distances between planar sets by taking infimum. 

Given $k\in\mathbb{N}$. Let $D_k=\deg(f^{n_k}|_{P_{n_k}})$, and let $x_k^1=x, x_k^2, \dots, x_k^{D_k}$ denote the preimages of $f^{n_k}(x)$ under $f^{n_k}:P_{n_k}\to Q_0$. 
By \cite[Lemma 4]{Zhai}, there are constants $\varepsilon = \varepsilon(D)>0$ and $0<r_k<\frac{1}{2}{\operatorname{dist}(x,\partial P_{n_k})}$ such that $$\operatorname{dist}_{P_{n_k}}\left(S^1(x,r_k),\ \{x_k^1, x_k^2, \dots, x_k^{D_k}\}\right)\geq \varepsilon,$$
where $S^1(x,r_k)$ is the round circle centered at $x$ with radius $r_k$. 
For any $z\in S^1(x,r_k)$ and $1\leq j\leq D_k$, 
$$\operatorname{dist}_{\mathbb{D}}\left(\frac{\beta_k(z)-\beta_k(x_k^j)}{1-\overline{\beta_k(x_k^j)}\beta_k(z)}, 0\right) = \operatorname{dist}_{\mathbb{D}}(\beta_k(z),\beta_k(x_k^j))=\operatorname{dist}_{P_{n_k}}(z,x_k^j)\geq\varepsilon.$$
Note that $\operatorname{dist}_{\mathbb{D}}(r,0) = \log\frac{1+r}{1-r}$. For any $z\in S^1(x,r_k)$, we have 
$$|h_k\circ \beta_k(z)| = \prod_{j=1}^{D_k} \left|\frac{\beta_k(z)-\beta_k(x_k^j)}{1-\overline{\beta_k(x_k^j)}\beta_k(z)}\right|\geq\left( \frac{e^\varepsilon-1}{e^\varepsilon+1}\right)^{D_k}\geq\left( \frac{e^\varepsilon-1}{e^\varepsilon+1}\right)^D.$$
On the other hand, by $r_k<\frac{1}{2}{\operatorname{dist}(x,\partial P_{n_k})}$, we have $$\operatorname{mod}(P_{n_k}\setminus \overline{B(x,r_k)})\geq \operatorname{mod}(B(x,2r_k)\setminus \overline{B(x,r_k)})=\frac{\log 2}{2\pi}.$$
Hence the hyperbolic diameter of $S^1(x,r_k)$ in $P_{n_k}$  is uniformly bounded. By Schwarz's lemma, the hyperbolic diameter of $h_k\circ\beta_k(S^1(x,r_k))$ in $\mathbb{D}$ is also uniformly bounded. Now there are constants $0<R_1<R_2<1$ independent of $k$ such that $$h_k\circ \beta_k(S^1(x,r_k))\subset \{z\in\mathbb{D}\mid R_1<|z|<R_2\}.$$

Note that $\alpha_k\to\alpha$ as $k\to\infty$ and $\phi:Q_0\to\phi(Q_0)$ is a homeomorphism. 
There are constants $0<R'_1<R'_2$ independent of $k$ such that \begin{align*}
\phi\circ f^{n_k}(S^1(x,r_k))
&= \phi\circ\alpha_k^{-1} \circ h_k\circ\beta_k(S^1(x,r_k))\\
&\subset \left\{z\in\phi(Q_0)\mid R'_1<\operatorname{dist}_{\phi(Q_0)}(z,\phi\circ f^{n_k}(x))<R'_2\right\}.\end{align*}
Then by $\deg(g^{n_k}:\phi(P_{n_k})\to\phi(Q_0))\leq D$ and Lemma \ref{shape-distortion}, there is a constant $M$ independent of $k$ such that $\operatorname{Shape}(\phi(B(x,r_k)),\phi(x))\leq M$. Letting $k\to\infty$ gives $\underline{H}(\phi,x)<\infty$. 

\vspace{6pt}
\textbf{Claim 3.} There is a constant $M$ such that for each $x\in X_3$, the nest $\mathbf{P}$ of $x$ has a subnest $\{P_{n_k}\}_{k\in\mathbb{N}}$ satisfying 
$$\operatorname{Shape}(P_{n_k},x)\leq M \text{~and~} 
\operatorname{Shape}(\phi(P_{n_k}),\phi(x))\leq M$$
for any $k\in\mathbb{N}$. 
\vspace{6pt}

Fix $x\in X_3$ and let $\mathbf{P}$ be the nest with end $\{x\}$. 
Let $\mathbf{C}\in \omega(\mathbf{P})\cap \mathcal P_{\rm c}$. Then $\mathbf{C}$ has a subnest 
$$C_{n_0}
\supset
C_{n'_1}\supset C_{n_1}\supset C_{\widetilde n_1}
\supset
C_{n'_2}\supset C_{n_2}\supset C_{\widetilde n_2}
\supset
\cdots$$ 
satisfying the properties in Theorem \ref{p-nest}. 
Suppose $\operatorname{end}(\mathbf{C})=\{c\}$. 
Then $f^{n_{j+1}-n_j}(c)\in C_{n'_j}$ for each $j\geq 1$. It follows that 
$$f^{n_{j+1}-n_j}:(C_{n_{j+1}-n_j+n'_j},C_{n_{j+1}},C_{n_{j+1}-n_j+\widetilde n_j})\to(C_{n'_j}, C_{n_j}, C_{\widetilde n_j}).$$
Moreover, their degrees are the same number in $[2,D]$. 
By \cite[Lemma 2]{YZ}, \cite[Lemma 2.2]{RYZ} and Koebe's distortion theorem, there is a constant $K(m,D)\geq 1$ such that for any $j\geq j_0$, 
$$\operatorname{Shape}(C_{n_{j+1}},c)\leq K(m,D) \operatorname{Shape}(C_{n_j},c)^{\frac{1}{2}}.$$
Repeating this inequality, we have 
\begin{align*}
\operatorname{Shape}(C_{n_{j+1}},c)
&\leq K(m,D)^{1+\frac{1}{2}+\cdots + \frac{1}{2^{j-j_0}}} \operatorname{Shape}(C_{n_{j_0}},c)^{\frac{1}{2^{j+1-j_0}}}\\
&\leq K(m,D)^2\operatorname{Shape}(C_{n_{j_0}},c).
\end{align*}

Choose $j_1=j_1(x)\geq j_0$ such that for each $\mathbf{C}''\in \mathcal P_{\rm c}\setminus \omega(\mathbf{P})$, we have  $$f^k(P_{k+ n'_{j_1}})\cap  C''_{n'_{j_1}}=\emptyset$$ for any $k\geq0$ except at most one $k$ with $\sigma^k(\mathbf{P}) = \mathbf{C}''$. 
\footnote{
The relation $\sigma^k(\mathbf{P}) = \mathbf{C}''$ implies $f^k(x)$ is a critical point. We can also put such $x$ into the countable set $X_4$. 
}
For $j\geq j_1$, let $r_j\geq 1$ be the first entry time of $\mathbf{P}$ (or $x$) into $C_{\widetilde n_j}$. Then the degrees of the triple 
$$f^{r_j}:(P_{r_j+n'_j},P_{r_j+n_j},P_{r_j+\widetilde n_j})\to (C_{n'_j},C_{n_j},C_{\widetilde n_j})$$ 
are the same number in $[1,d^{d-1}]$ by Lemma \ref{first-entry-time}. 
By \cite[Lemma 2]{YZ}, \cite[Lemma 2.2]{RYZ} and Koebe's distortion theorem again, for any $j\geq j_1$, 
\begin{align*}
\operatorname{Shape}(P_{r_j+n_j},x)
&\leq K(m,d^{d-1}) \operatorname{Shape}(C_{n_j},c)\\
&\leq K(m,d^{d-1})K(m,D)^2\operatorname{Shape}(C_{n_{j_0}},c)\leq M,
\end{align*}
where $M$ is a constant independent of $x$. 

Since $\phi$ is a conjugacy, it preserves the combinatorial structure. Hence there is another constant $M'$ independent of $x$ such that $$\operatorname{Shape}(\phi(P_{r_j+n_j}),\phi(x))\leq M'$$ for $j$ large enough. 
This shows Claim 3, and completes the proof. 
\end{proof}

\begin{lemma}
\label{shape-distortion}
Let $U\Subset V$ be Jordan domains in $\mathbb{C}$, and let $x\in U$. Let $h:V\to \mathbb{D}$ be a proper map of degree $D$ with $h(x)=0$. Suppose there are constants $0<r\leq R<1$ such that $r\leq |h(z)|\leq R$ for any $z\in \partial U$. Then there is a constant $M=M(D,r,R)$ such that $\operatorname{Shape}(U,x)\leq M$. 

In particular, if $r=R$, then $\operatorname{Shape}(U,x)\leq M=M(D,r)$. 
\end{lemma}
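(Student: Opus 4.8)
The plan is to control the shape of $U$ at $x$ by comparing $U$ to level sets of the proper map $h$, using the fact that a proper map of bounded degree has bounded valence and hence distorts moduli only by a bounded factor. The key geometric objects are the components of $h^{-1}(\mathbb{D}(t))$ for $r\le t\le R$: since $|h|$ lies between $r$ and $R$ on $\partial U$, the component $W_r$ of $h^{-1}(\mathbb{D}(r))$ containing $x$ satisfies $W_r\subset U$, and $U$ is contained in the component $W_R$ of $h^{-1}(\mathbb{D}(R))$ containing $x$. So it suffices to bound $\operatorname{Shape}(W_r,x)$ from above, $\operatorname{Shape}(W_R,x)$ from above, and the "eccentricity" coming from $W_r\subset U\subset W_R$ — i.e. to show $W_R$ is not too much larger than $W_r$, so that the round annulus realizing the shape of $U$ is pinched between comparable round annuli.

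\textbf{Step 1: Reduce to level sets.} Note $W_r\subset U\subset W_R$, so $\operatorname{dist}(x,\partial U)\ge \operatorname{dist}(x,\partial W_r)$ and $\operatorname{diam}(U)\le\operatorname{diam}(W_R)$; hence $\operatorname{Shape}(U,x)\le \operatorname{diam}(W_R)/\operatorname{dist}(x,\partial W_r)$. \textbf{Step 2: Bound $\operatorname{mod}(W_R\setminus\overline{W_r})$ from below.} The annulus $\mathbb{D}(R)\setminus\overline{\mathbb{D}(r)}$ has modulus $\frac{1}{2\pi}\log(R/r)$, and $h:W_R\setminus\overline{W_r}\to\mathbb{D}(R)\setminus\overline{\mathbb{D}(r)}$ is a proper (possibly branched) cover of degree at most $D$, so $\operatorname{mod}(W_R\setminus\overline{W_r})\ge \frac{1}{2\pi D}\log(R/r)=:m_0>0$. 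If $r=R$ this step is skipped: then $W_r=W_R=U$ directly and one only needs Step 3. \textbf{Step 3: Bound the shape of the small and large level sets.} Here I would invoke the following standard fact (which the paper is clearly prepared to use — cf. its citations to \cite{Zhai}, \cite{YZ}, \cite{RYZ}): a component $W$ of $h^{-1}(\mathbb{D}(t))$ through $x$, with $h$ a degree-$D$ proper map and $h(x)=0$, has $\operatorname{Shape}(W,x)$ bounded by a constant depending only on $D$ (and $t$, relative to $1$). The point is that $h|_W:W\to\mathbb{D}(t)$ is proper of degree $\le D$ with $h(x)=0$, the disk $\mathbb{D}(t)$ is uniformly round, and proper maps of bounded degree have a uniform distortion of round shape near a preimage of the center — this is exactly the kind of estimate packaged in \cite[Lemma 2.3]{Shen} / \cite[Lemma 4]{Zhai}. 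Applying it to $W_R$ gives $\operatorname{Shape}(W_R,x)\le C_1(D,R)$; in particular $\operatorname{diam}(W_R)\le C_1\operatorname{dist}(x,\partial W_R)$.

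\textbf{Step 4: Conclude.} By Step 2, $\operatorname{dist}(x,\partial W_r)$ and $\operatorname{dist}(x,\partial W_R)$ are comparable: a round annulus of modulus $\ge m_0$ separating $\partial W_r$ from $\partial W_R$ forces $\operatorname{dist}(x,\partial W_R)\le e^{2\pi\cdot\operatorname{Shape}(W_R,x)}\operatorname{dist}(x,\partial W_r)$ (or more simply, since $W_r\subset W_R$ and $\operatorname{mod}(W_R\setminus\overline{W_r})\ge m_0$, the largest round disk in $W_R$ centered at $x$ and the smallest round disk containing $W_r$ centered at $x$ have radius ratio bounded in terms of $m_0$ and $\operatorname{Shape}(W_R,x)$). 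Combining with Step 3, $\operatorname{diam}(W_R)\le C_1\operatorname{dist}(x,\partial W_R)\le C_1 e^{2\pi C_1}\operatorname{dist}(x,\partial W_r)$, and then Step 1 yields $\operatorname{Shape}(U,x)\le M(D,r,R)$. For the last sentence of the statement, when $r=R$ we bypass Steps 1--2 and 4 entirely: $U$ is squeezed between $W_{r}^{-}$ and $W_{r}^{+}$ for $|h|$ slightly below and above $r$, or one simply observes $W_r$ and $U$ agree on $\partial U$-level so $\operatorname{Shape}(U,x)=\operatorname{Shape}(W_r,x)\le M(D,r)$ by Step 3 alone.

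\textbf{The main obstacle} I expect is Step 3 — making precise and citing the "bounded degree proper map implies bounded round shape near a preimage of the center." This is where the branching of $h$ at points of $W$ other than $x$ could a priori stretch $W$ into a long thin region; the resolution is that bounded degree bounds the valence, so $h|_W$ is a bounded-degree branched cover of a round disk, and such covers have uniformly bounded geometric distortion at a fixed preimage of the center (one can also argue via the hyperbolic metric: $h|_W$ is a covering-like map onto $\mathbb{D}(t)$, pull back the hyperbolic metric of $\mathbb{D}(t)$, and control the Euclidean-to-hyperbolic comparison through the degree bound). I would cite \cite[Lemma 4]{Zhai} or \cite[Lemma 2.3]{Shen} rather than reprove it. Everything else — the modulus estimate for branched covers of round annuli, and the elementary comparison of nested round disks through an annulus of definite modulus — is routine.
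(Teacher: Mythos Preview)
Your argument has a genuine gap in Steps~2 and~4. A \emph{lower} bound on $\operatorname{mod}(W_R\setminus\overline{W_r})$ says the annulus is thick, i.e.\ $W_R$ can be arbitrarily larger than $W_r$; it does \emph{not} give $\operatorname{dist}(x,\partial W_R)\le C\,\operatorname{dist}(x,\partial W_r)$. (Think of $W_r=\mathbb D(1)$, $W_R=\mathbb D(N)$: the modulus is $\tfrac{1}{2\pi}\log N$, large, yet the size ratio is $N$.) What you would need is an \emph{upper} bound on that modulus, and this is not immediate: $h^{-1}(\mathbb D(r))\cap W_R$ may have components other than $W_r$, so $h|_{W_R\setminus\overline{W_r}}$ is not proper onto $\mathbb D(R)\setminus\overline{\mathbb D(r)}$ and can have zeros there. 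Moreover, your Step~3 invokes as a ``standard fact'' precisely the case $r=R$ of the lemma you are proving, so the core content is outsourced rather than established.

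The paper's proof avoids both issues by uniformizing $V$ first. Take the Riemann map $\alpha:V\to\mathbb D$ with $\alpha(x)=0$. Schwarz's lemma applied to $h\circ\alpha^{-1}:\mathbb D\to\mathbb D$ gives $|h(z)|\le|\alpha(z)|$, so $|\alpha|\ge r$ on $\partial U$ and hence $\mathbb D(r)\subset\alpha(U)$ --- this replaces your Step~3 for $W_r$ with a one-line Schwarz argument. For the outer bound, one bounds $\operatorname{mod}(\mathbb D\setminus\overline{\alpha(U)})$ from \emph{below} by $\operatorname{mod}(\mathbb D\setminus\overline{\mathbb D(R)})/D$ (via the degree-$D$ branched cover $h\circ\alpha^{-1}$ on the complement of the $R$-level component). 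A lower bound on the modulus of the \emph{complementary} annulus \emph{does} force $\alpha(U)\subset\mathbb D(R')$ for some $R'=R'(D,R)<1$; this is the correct direction. Then $\operatorname{Shape}(\alpha(U),0)\le R'/r$, and Koebe distortion on $\alpha^{-1}$ transfers the bound to $U$.
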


\begin{proof}
Let $\alpha: V\to \mathbb{D}$ be a conformal map with $\alpha(x)=0$. 
By Schwarz's lemma, we have $\mathbb{D}(r)\subset \alpha(U)$.  
Let $W$ denote the connected component of $\alpha\circ h^{-1}(\mathbb{D}(R))$ containing $x$. 
Then 
$\operatorname{mod}(\mathbb{D}\setminus \overline{\alpha(U)})
\geq\operatorname{mod}(\mathbb{D}\setminus \overline{W})
\geq\operatorname{mod}(\mathbb{D}\setminus \overline{\mathbb{D}(R)})/D$.  
It follows that $\alpha(U)\subset \mathbb{D}(R')$ for some constant $R'=R'(D,R)<1$. 
Then $\operatorname{Shape}(\alpha(U),0)\leq \frac{R'}{r}$. 
Applying Koebe's distortion theorem to $\alpha^{-1}$ completes the proof. 
\end{proof}

\subsection{No invariant line field}

Let $f\in\mathcal{P}_d$. We say $f$ carries an \emph{invariant line field on its Julia set} if there is a measurable Beltrami differential $\mu=\mu(z) \frac{d\bar{z}}{dz}$ such that $f^*\mu = \mu$ a.e., $|\mu|=1$ on a positive area subset of $J(f)$ and $\mu = 0$ elsewhere.  

For a measurable function $\mu:\mathbb{C}\to\mathbb{C}$, we say $\mu$ is \emph{almost continuous} at $x$ if for any $\varepsilon>0$, $$\lim_{r\to0} \frac{\operatorname{area}(\{y\in B(x,r)\mid |\mu(y)-\mu(x)|\leq \varepsilon\})}{\operatorname{area}(B(x,r))}=1.$$
Then $\mu$ is almost continuous almost everywhere. 

\begin{proposition}
\label{nilf}
Any $f\in \mathcal Y_d$ carries no invariant line field on $J(f)$.  
\end{proposition}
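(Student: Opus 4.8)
The plan is to argue by contradiction and reduce to the setting of the no–invariant–line–field criterion \cite[Proposition 3.2]{Shen}. So suppose $\mu$ is an invariant line field for $f$, supported on a Borel set $A\subset J(f)$ of positive area with $|\mu|\equiv 1$ on $A$. First I would construct the puzzle for $f$ exactly as in the proof of Proposition \ref{hybird2qc} (taking into account every parabolic periodic Fatou component of $K(f)=Y^{k(f)}(f)$), so that the end of every nest is a singleton, and then decompose $J(f)=X_2\cup X_3\cup X_4$ as in that proof: $X_2$ (resp. $X_3$) is the set of wandering points whose nest satisfies the bounded degree condition (resp. the persistent recurrence condition), and $X_4$ is the countable set of preperiodic points; every point of $J(f)$ lies in one of these sets by Lemmas \ref{BD-1} through \ref{BD-reluctant}. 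Since $\operatorname{area}(X_2)=0$ by Claim 1 in the proof of Proposition \ref{hybird2qc} and $X_4$ is countable, we would have $\operatorname{area}(A\cap X_3)>0$, so it suffices to exclude an invariant line field supported on a positive area subset of $X_3$.

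Next I would fix $x\in A\cap X_3$ that is simultaneously a Lebesgue density point of $A$ and a point of almost continuity of $\mu$ (almost every point of $A$ has both properties). Running the construction in the proof of Claim 3 in the proof of Proposition \ref{hybird2qc} produces: a persistently recurrent critical nest $\mathbf{C}\in\omega(\mathbf{P})\cap\mathcal P_{\rm c}$ with $\operatorname{end}(\mathbf{C})=\{c\}$; a principal nest $C_{n_0}\supset C_{n'_1}\supset C_{n_1}\supset C_{\widetilde n_1}\supset\cdots$ as in Theorem \ref{p-nest}; and first entry times $r_j\geq 1$ of the nest of $x$ into $C_{\widetilde n_j}$ for which the triple
$$f^{r_j}\colon\ (P_{r_j+n'_j},\,P_{r_j+n_j},\,P_{r_j+\widetilde n_j})\ \longrightarrow\ (C_{n'_j},\,C_{n_j},\,C_{\widetilde n_j})$$
has degree at most $d^{d-1}$, with $\operatorname{Shape}(P_{r_j+n_j},x)\leq M$ and $\operatorname{Shape}(C_{n_j},c)\leq M$ for a uniform constant $M$. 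By the complex bounds, Theorem \ref{p-nest}(\ref{complex-bound}), the annuli $C_{n'_j}\setminus\overline{C_{n_j}}$ and $C_{n_j}\setminus\overline{C_{\widetilde n_j}}$ have modulus at least $m$ for $j\geq j_0$, and pulling them back by $f^{r_j}$ shows $P_{r_j+n'_j}\setminus\overline{P_{r_j+n_j}}$ and $P_{r_j+n_j}\setminus\overline{P_{r_j+\widetilde n_j}}$ have modulus at least $m/d^{d-1}$ for $j\geq j_0$. Thus $x$ is surrounded, at a sequence of scales tending to $0$, by round–shaped puzzle pieces separated by annuli of definite modulus, and each such piece is carried by an iterate of $f$ of degree at most $d^{d-1}$ onto the principal nest of the recurrent critical point $c$, around which the same bounded geometry holds.

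Finally I would derive the contradiction from the line field. Transport $\mu$ from near $x$ to near $c$ by the maps $f^{r_j}$ and normalize the pieces $C_{n_j}$ by conformal maps onto a fixed round disk, which have bounded distortion since $\operatorname{Shape}(C_{n_j},c)\leq M$. Using the almost continuity of $\mu$ at $x$, the degree bounds on $f^{r_j}$, and the definite moduli, one extracts along a subsequence a limiting measurable line field $\nu$ on the round disk which still has $|\nu|\equiv 1$ on a set of positive area (because $x$ is a density point of $A$ and all distortions are controlled) and which is forced to be invariant under a nonconstant holomorphic germ read off from the principal–nest return maps $f^{n_{j+1}-n_j}$; here the degree bound of Theorem \ref{p-nest}(1) together with the complex bounds of Theorem \ref{p-nest}(\ref{complex-bound}) are precisely what keep this limit nondegenerate. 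This configuration cannot support such a $\nu$ — it is exactly the conclusion of \cite[Proposition 3.2]{Shen} once the geometric data above are supplied — contradicting the existence of $\mu$. I expect the main obstacle to be this last step: organizing the rescaling so that the limit is genuinely the invariant line field of a non-injective bounded-degree holomorphic dynamical system (rather than a trivial or uncontrolled limit) requires the bounded-shape estimates of Claim 3 and the complex bounds of Theorem \ref{p-nest} at the same time, together with a check that the finitely many critical values met along the transporting orbits $f^i(c')$ do not obstruct the limit.
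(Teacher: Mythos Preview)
Your reduction to the persistently recurrent set $X_3$ and your setup of the principal nest, first entry times $r_j$, and the uniform shape bounds from Claim 3 are all correct and agree with the paper. The gap is in the final step.

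What \cite[Proposition 3.2]{Shen} actually needs at the chosen point $x$ (where $|\mu(x)|=1$ and $\mu$ is almost continuous) is a sequence of $\mu$-preserving holomorphic maps $h_j\colon U_j\to V_j$ with $\deg h_j\geq 2$, a critical point of $h_j$ inside $U_j$, and \emph{both} $U_j$ and $V_j$ shrinking to $x$ with uniformly bounded shape. Your maps $f^{r_j}$ go from $x$ to $c$, and the principal-nest returns $f^{n_{j+1}-n_j}$ live near $c$, not near $x$; neither is a self-map at $x$. Your proposed workaround --- transport $\mu$ to $c$ via $f^{r_j}$, renormalise the $C_{n_j}$, and pass to a limit line field $\nu$ invariant under a limit of the return maps --- does not close: because $\mu$ is globally $f$-invariant, the ``transported'' line field on $C_{n_j}$ is literally $\mu|_{C_{n_j}}$, so the almost continuity you arranged at $x$ gives you no information at $c$. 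You would need $c$ itself to be a point of almost continuity with $|\mu(c)|=1$, and there is no reason for that.

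The paper supplies the missing object directly. With $r_j$ the first entry of $x$ into $C_{\widetilde n_j}$, let $s_j\geq 1$ be minimal with $f^{s_j}(P_{\widetilde n_j+r_j})\ni c'$ for some critical point $c'$ (so $f^{s_j}$ is univalent on $P_{\widetilde n_j+r_j}$ and has a conformal inverse there); let $u_j$ be the first entry of $c$ into $C'_{\widetilde n_j+r_j-s_j}$ and $v_j$ the first entry of $x$ into $C_{\widetilde n_j+r_j-s_j+u_j}$. Then
\[
h_j\;:=\;f^{-s_j}\circ f^{u_j+v_j}\ \colon\ P_{\widetilde n_j+r_j-s_j+u_j+v_j}\ \longrightarrow\ P_{\widetilde n_j+r_j}
\]
sends a piece of the nest of $x$ to a shallower piece of the same nest, has degree in $[2,d^{2(d-1)}]$ by Lemma \ref{first-entry-time}, and has a critical point at any $a\in f^{-v_j}(c)$ lying in the domain. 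The same shape argument as in Claim 3 bounds $\operatorname{Shape}(P_{n_j+r_j-s_j+u_j+v_j},a)$ and $\operatorname{Shape}(P_{n_j+r_j},h_j(a))$ uniformly, and since $h_j$ is built from iterates and univalent inverse branches of $f$ it preserves $\mu$. Now \cite[Proposition 3.2]{Shen} applies at $x$ and gives the contradiction. This construction of $h_j$ --- a bounded-degree branched self-map at $x$ with a genuine critical point --- is the idea your sketch is missing.
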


\begin{proof}
Assume $f$ carries an invariant line field $\mu$ on $J(f)$; we will find a contradiction through \cite[Proposition 3.2]{Shen}. Continue the notations in the proof of Proposition \ref{hybird2qc}. 
Since $\mathbb{C}=\bigcup_{k=1}^4 X_k$, $\operatorname{area}(X_2\cup X_3)=0$ and $\mu=0$ on $X_1$, the line field $\mu$ is supported on $X_3$. Now let $x\in X_3$ such that its orbit $\{f^n(x)\mid n\in\mathbb{N}\}$ avoids the critical points of $f$.

As the proof of Claim 3 in Proposition \ref{hybird2qc}, 
for $j\geq j_1$, let $r_j\geq 1$ be the first entry time of $x$ into $C_{\widetilde n_j}$. 
Let $s_j$ be the smallest positive integer such that $f^{s_j}(P_{\widetilde n_j + r_j})$ contains a critical point $c'$, whose nest is denoted by $\mathbf{C}'$. 
See Figure \ref{fig-three-first-entries}. 
By the choices of $x$ and $j_1$, we have $\mathbf{C}'\in \omega(\mathbf{P})\cap \mathcal P_{\rm c}$. 
Note that $\mathbf{C}'\in \omega(\mathbf{C})$. 
Let $u_j\geq1$ be the first entry time of $c$ into $C'_{\widetilde n_j+r_j-s_j}$. 
Let $v_j\geq1$ be the first entry time of $x$ into $C_{\widetilde n_j+r_j-s_j+u_j}$. 
Define 
$$h_j = f^{-s_j}\circ f^{u_j+v_j}: P_{\widetilde n_j+r_j-s_j+u_j+v_j}\to P_{\widetilde n_j+r_j}.$$
By Lemma \ref{first-entry-time}, $2\leq\deg(h_j)\leq d^{2(d-1)}$. Furthermore, replacing the subscript $\widetilde n_j$ by $n_j$ or $n'_j$, we will get a proper map with the same degree.

Let $a\in f^{-v_j}(c)\cap P_{\widetilde n_j+r_j-s_j+u_j+v_j}$. Then $h'_j(a)=0$. 
We have shown $\operatorname{Shape}(P_{n_j+r_j},x)$ is uniformly bounded for $j\geq j_1$. The same reason gives the uniform bounds of $\operatorname{Shape}(P_{n_j+r_j-s_j+u_j+v_j},a)$ and $\operatorname{Shape}(P_{n_j+r_j},h_j(a))$. 

\begin{figure}[ht]
\centering
\includegraphics{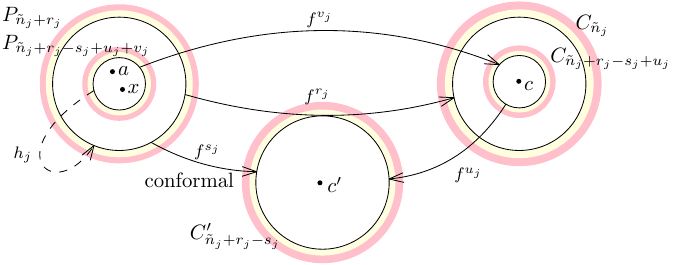}
\caption{A sketch of the construction of $h_j$. The adjacent three circles correspond to $n'_j<n_j<\widetilde n_j$.}
\label{fig-three-first-entries}
\end{figure}

By \cite[Proposition 3.2]{Shen}, $\mu(x)=0$ or $\mu$ is not almost continuous at $x$. 
This contradicts that $|\mu(x)|=1$ and $\mu$ is almost continuous at $x$ for $x$ in a positive area subset of $J(f)$. Therefore $f$ carries no invariant line field on its Julia set. 
\end{proof}

\appendix

\section{A criterion for quasiconformality}
\label{sec-qc-criterion}

In this appendix, we shall provide a criterion for quasiconformality which we used. 
This is an analogue of \cite[Lemma 12.1]{KSS}. 
See also \cite{Zhai}, \cite[Theorem 4]{Smania}, \cite{HK} and \cite{KK}. 

For a Jordan domain $U\subset\mathbb{C}$ and a point $x\in U$, the \emph{shape} of $U$ about $x$ is defined to be $$\operatorname{Shape}(U,x) = \frac{\max_{y\in\partial U}|y-x|}{\min_{y\in\partial U}|y-x|}.$$
We say $U$ has an $M$-shape if there is $x\in U$ so that $\operatorname{Shape}(U,x)\leq M$. 
For a homeomorphism $\phi:\Omega\to \Omega'$ between domains in $\mathbb{C}$ and $x\in \Omega$, let 
$$\underline{H}(\phi,x) = 
\liminf_{r\to 0}\ \operatorname{Shape}(\phi(B(x,r)),\phi(x))\in[1,\infty].$$
Here $B(x,r)$ is the open disk centered at $x$ with radius $r$. 

\begin{remark}
$\underline{H}(g\circ\phi,x) = \underline{H}(\phi,x)$ for a conformal transformation $g$ near $\phi(x)$. 
But $\underline{H}(\phi\circ g,x) = \underline{H}(\phi,x)$ may not hold. 
For example, consider a radial stretching defined by $\phi(z) = e^{-\frac{1}{|z|^2}}\cdot \frac{z}{|z|}$ and $\phi(0)=0$. Let $g(z)=\frac{z}{1-z}$. Then $\underline{H}(\phi,0)=1$, but $\underline{H}(\phi\circ g,0)=\lim_{r\to 0}\frac{|\phi\circ g(r)|}{|\phi\circ g(-r)|} = \infty$. 
\end{remark}

\begin{proposition}
\label{criterion-qc}
Let $\phi:\Omega\to \Omega'$ be an orientation-preserving homeomorphism between domains in $\mathbb{C}$. 
Let $H\geq1$ and $M\geq1$ be constants. 
Suppose $\Omega=\bigcup_{k=1}^4 X_k$ satisfy the following properties. 
\begin{enumerate}
\item $\underline{H}(\phi,x)\leq H$ for any $x\in X_1$.

\item $\operatorname{area}(\phi(X_2))=0$ and $\underline{H}(\phi,x)<\infty$ for any $x\in X_2$. 


\item There is a Markov family $\mathcal{F}$ of Jordan domains in $\mathbb{C}$ such that 
\footnote{
Here the Markov property of $\mathcal F$ means that $A_1\cap A_2=\emptyset$, or $A_1\subset A_2$, or $A_2\subset A_1$ for any $A_1,A_2\in \mathcal F$, and there are at most finitely many $A\in\mathcal{F}$ containing $A_0$ for any $A_0\in\mathcal{F}$. 
}
\begin{itemize}
\item for any $x\in X_3$, there is a sequence $A_1\Supset A_2\Supset \cdots$ in $\mathcal{F}$ such that $\bigcap_{n=1}^\infty A_n = \{x\}$; and
\item $\mathcal{F}$ and $\{\phi(A)\mid A\in\mathcal{F}\}$ have $M$-uniform shape. 
\end{itemize} 

\item $\operatorname{length}(\phi(X_4))=0$.
\end{enumerate} 
Then $\phi$ is $K$-qc, where $K=K(H,M)\geq1$. 
\end{proposition}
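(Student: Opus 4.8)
The plan is to establish quasiconformality by combining the standard removability-and-bounded-dilatation package: show first that $\phi$ is locally quasiconformal (hence ACL with bounded dilatation) off a small exceptional set, and then upgrade this to genuine quasiconformality by proving the exceptional set is removable. Concretely, I would proceed as follows.

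\textbf{Step 1: Bounded dilatation on $X_1$.} For $x\in X_1$ the hypothesis $\underline H(\phi,x)\le H$ gives a sequence of radii $r_n\to 0$ along which $\phi(B(x,r_n))$ is squeezed between two concentric disks of ratio $\le H$. A classical argument (as in \cite[Lemma 12.1]{KSS}; see also \cite{Zhai, Smania}) then shows $\phi$ is differentiable on a full-measure subset of $X_1$ with dilatation bounded by some $K_0=K_0(H)$. More precisely, one uses that an orientation-preserving homeomorphism with $\underline H(\phi,x)<\infty$ on a set $E$ and with $\operatorname{area}(\phi(E'))=0$ on the complementary null part is differentiable a.e.\ on $E$ with $|\bar\partial\phi/\partial\phi|\le (H-1)/(H+1)$ at points where $\underline H\le H$. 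Applying this to $X_1$ (with $H$) and $X_2$ (only $\underline H<\infty$, but $\operatorname{area}(\phi(X_2))=0$), we get: at a.e.\ point of $X_1$ the complex dilatation is bounded by $K_0(H)$, and $X_2\cup\phi^{-1}(\text{null})$ contributes nothing to the area.

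\textbf{Step 2: The Markov family controls $X_3$.} For $x\in X_3$ we have a nested sequence $A_1\Supset A_2\Supset\cdots$ in $\mathcal F$ shrinking to $x$, with all $A_n$ and all $\phi(A_n)$ of $M$-bounded shape. Bounded shape of source and image along a shrinking nest forces, by a standard round-annulus/Koebe comparison, that $\phi$ is ``weakly quasisymmetric'' at $x$ in the sense that $\underline H(\phi,x)\le H'(M)<\infty$; moreover combined with the area arguments one gets that $X_3$ is either null or that $\phi$ has bounded dilatation a.e.\ there too. The key point is that the finiteness-of-ancestors (Markov) condition prevents degeneracy and lets one run a Lebesgue-density argument: at a.e.\ $x\in X_3$ one compares $\operatorname{area}(\phi(A_n))$ with $\operatorname{area}(A_n)$ using bounded shape, obtaining a uniform bound on the infinitesimal area distortion, hence on the dilatation. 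So after Steps 1--2, $\phi$ has dilatation $\le K_1(H,M)$ at a.e.\ point of $X_1\cup X_2\cup X_3$, i.e.\ a.e.\ point of $\Omega\setminus X_4$.

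\textbf{Step 3: Removability of $X_4$ and conclusion.} Since $\operatorname{length}(\phi(X_4))=0$, the image set $\phi(X_4)$ has zero $1$-dimensional Hausdorff measure and is therefore removable for quasiconformal maps (a set of $\sigma$-finite length, or even just length zero, is removable — this is the classical removability result, e.g.\ via the analytic definition: a homeomorphism that is ACL and has bounded dilatation off a length-zero set is globally quasiconformal). One then checks $\phi$ is ACL on $\Omega$: on almost every horizontal and vertical line the intersection with $X_4$ maps to a length-zero set, and off $X_4$ the map is locally absolutely continuous by Steps 1--2; the bounded-shape/finite-$\underline H$ control supplies the needed equicontinuity to rule out singular behavior. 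Hence $\phi$ is ACL on $\Omega$ with $|\mu_\phi|\le (K_1-1)/(K_1+1)$ a.e., so by Weyl's lemma / the analytic definition $\phi$ is $K$-quasiconformal with $K=K(H,M)$.

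The main obstacle is Step 2: translating the purely geometric ``$M$-bounded shape of a shrinking Markov nest and its image'' into an analytic bound on the dilatation of $\phi$ at $x$. Bounded shape gives control of $\underline H(\phi,x)$ only along the particular sequence $A_n$, not along all round balls, and the $A_n$ need not be round; one must interpose round disks inside and outside each $A_n$ and each $\phi(A_n)$ and verify (using the Markov/finite-ancestor property to get nondegeneracy of the moduli of the annuli $A_n\setminus\overline{A_{n+1}}$, or rather of $A_n$ against a round subdisk) that the distortion does not accumulate. This is exactly where the hypothesis that $\mathcal F$ is Markov — not just a shrinking chain — is used, and it is the technical heart of the appendix; the rest is bookkeeping with standard quasiconformal removability and ACL criteria.
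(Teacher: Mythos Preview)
Your approach is genuinely different from the paper's and has a real gap.

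\textbf{The paper's route.} The paper does \emph{not} try to verify the analytic (ACL + bounded dilatation a.e.) definition. Instead it works with the geometric definition via extremal length: given a topological quadrilateral $Q\Subset\Omega$ with $\lambda'=1$ on the image side, it builds an admissible metric $\rho$ on $Q$ by covering $X_1\cap Q$, $X_2\cap Q$, $X_3\cap Q$ separately (Besicovitch for $X_1,X_2$; the Markov property to extract a \emph{disjoint} subcollection for $X_3$), setting $\rho=\sum_{A}\frac{\operatorname{diam}(g\circ\phi(A))}{\operatorname{diam}(A)}\chi_{\hat A}$, and then proving $\int_\gamma\rho\ge c$ for every $\gamma\in\Gamma$ and $\int\rho^2\le C(H,M)$. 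The length-zero hypothesis on $\phi(X_4)$ is used exactly once, to say that the covered part of $g\circ\phi(\gamma)$ already has full length. No differentiability, no ACL, no removability theorem.

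\textbf{Where your plan breaks.} The decisive problem is Step~2. You write that the Markov/finite-ancestor property yields ``nondegeneracy of the moduli of the annuli $A_n\setminus\overline{A_{n+1}}$''. It does not: the Markov property as stated says only that elements are nested-or-disjoint and that each has finitely many ancestors; there is no lower bound on $\operatorname{mod}(A_n\setminus\overline{A_{n+1}})$, and indeed in the paper's application these moduli need not be bounded below. Without that, bounded shape of the $A_n$ and of the $\phi(A_n)$ gives you control only along the particular nest, not along round balls, so you cannot conclude $\underline H(\phi,x)\le H'(M)$ on $X_3$. The paper circumvents this entirely: it uses the Markov property only to pick a \emph{disjoint} cover $\mathcal A_3\subset\mathcal F$ of $X_3\cap Q$ whose elements are deep enough in $Q$ and $Q'$, and then the $M$-shape bound alone yields $\sum_{A\in\mathcal A_3}\operatorname{diam}(g\circ\phi(A))^2\le 4N^2$.

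A secondary issue: your treatment of $X_2$ does not fit cleanly into the ACL framework. You have only $\underline H<\infty$ pointwise (no uniform bound) and $\operatorname{area}(\phi(X_2))=0$; but $X_2$ is not assumed to have $\sigma$-finite length, so the Heinonen--Koskela/Kallunki--Koskela theorems do not apply with $X_2$ in the exceptional set, and area-zero of the image is not a standard removability hypothesis for the analytic definition. The paper's metric construction handles $X_2$ by decomposing it into level sets $X_2^n=\{n\le\underline H<n+1\}$, choosing neighborhoods $U_n$ with $\operatorname{area}(g\circ\phi(U_n))\le(n+1)^{-4}$, and summing --- this is precisely where both the finiteness of $\underline H$ and the area-zero of $\phi(X_2)$ are exploited, and it has no analytic-definition analogue without further hypotheses.
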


In complex dynamics, we can use this criterion to prove that topological conjugacy implies quasiconformal conjugacy. 
In applications, $X_1$ is the Fatou set, where $\phi$ is qc; $X_2$ consists of points in the Julia set with the bounded degree property; $X_3$ consists of points in the Julia set related to the persistently recurrent critical points, and we can construct $\mathcal{F}$ by KSS nests; and $X_4$ consists of preperiodic points, which is countable. 

\begin{proof}
Let $Q = Q(a,b,c,d)$ be a topological quadrilateral compactly contained in $\Omega$, and $Q'=Q'(a',b',c',d')=\phi(Q)$. 
Let $\Gamma$ be the family of rectifiable curves lying inside $Q$ which join the arcs $ab$ and $cd$, and let $\lambda$ denote the extremal length of $\Gamma$. Similarly, we can define $\Gamma'$ and $\lambda'$ for $Q'$. In order to prove the quasiconformality, we have to show there is a constant $K=K(H,M)\geq1$ such that $\lambda\geq \lambda'/K$. By \cite[Lemma 1]{Kelingos}, we only need to consider the case $\lambda'=1$. By the Riemann mapping theorem, there is a unique homeomorphism $g:\overline{Q'}\to\overline{\mathbb{D}}$ such that $g|_{Q'}$ is conformal and $g(a')=1,g(b')=i,g(c')=-1$. 
Then $g(d')=-i$ by $\lambda'=1$. 

We will construct a cover $\mathcal{A}_k$ of $X_k\cap Q$ for $1\leq k\leq 3$, and then define a metric $\rho$. A uniform lower bound of $\int_{\gamma}\rho(z)|dz|$ for any $\gamma\in\Gamma$ and an upper bound of $\int_\mathbb{C}\rho^2 dxdy$ give a lower bound of $\lambda$, which completes the proof. 

(1) Let us first define $\mathcal{A}_1$. For each $x\in X_1\cap Q$, there is an $r_x>0$ such that $g\circ\phi(B(x,r_x))$ has $2H$-shape and ${\rm mod}(Q\setminus \overline{B(x,r_x)})\geq 1$. 
By the Besicovitch covering theorem, there is a countable subfamily $\mathcal{A}_1$ of $\{B(x,r_x)\}_{x\in X_1\cap Q}$ such that $$\chi _{X_1\cap Q}\leq \sum_{A\in\mathcal{A}_1} \chi_A\leq C,$$ where $C$ is a universal constant. 
Then $$\sum_{A\in\mathcal{A}_1} \operatorname{diam}(g\circ \phi(A))^2\leq\sum_{A\in\mathcal{A}_1}\frac{(4H)^2}{\pi}\operatorname{area}(g\circ\phi(A))\leq 16 H^2 C. $$ 

(2) To define the cover $\mathcal{A}_2$, we first decompose the set $X_2\cap Q$ into countably many disjoint subsets $X_2^n = \{x\in  X_2\cap Q\mid n\leq \underline{H}(\phi,x)<n+1\}$ for integers $n\geq1$. Then fix a small open neighborhood $U_n$ of $X_2^n$ such that 
$$\operatorname{area}(g\circ\phi(U_n))\leq (n+1)^{-4}.$$
For each $x\in X_2^n$, there is an $r_x>0$ such that $g\circ\phi(B(x,r_x))$ has $(n+1)$-shape, $B(x,r_x)\subset U_n$ and ${\rm mod}(Q\setminus \overline{B(x,r_x)})\geq 1$. 
Let $\mathcal{D}_n = \{B(x,r_x)\}_{x\in X_2^n}$. 
Applying the Besicovitch covering theorem once again, there is a countable subfamily $\mathcal{A}_2$ of $\bigcup_{n=1}^\infty \mathcal{D}_n$ such that $$\chi _{X_2\cap Q}\leq \sum_{A\in\mathcal{A}_2} \chi_A\leq C,$$ where $C$ is the universal constant as before. 
Then 
\begin{align*}
&\sum_{A\in\mathcal{A}_2} \operatorname{diam}(g\circ \phi(A))^2\\
& = \sum_{n=1}^\infty \sum_{A\in\mathcal{A}_2\cap \mathcal{D}_n} \operatorname{diam}(g\circ \phi(A))^2
\leq \sum_{n=1}^\infty \sum_{A\in\mathcal{A}_2\cap \mathcal{D}_n} \frac{4(n+1)^2}{\pi}\operatorname{area}(g\circ \phi(A))
\\
&\leq \sum_{n=1}^\infty  \frac{4(n+1)^2}{\pi} C\cdot \operatorname{area}(g\circ\phi(U_n))
= \frac{4C}{\pi} \sum_{n=1}^\infty \frac{1}{(n+1)^2}
\leq \frac{4C}{\pi}. 
\end{align*}


(3) By the Markov property of $\mathcal{F}$, there is $\mathcal{A}_3\subset\mathcal{F}$ such that 
\begin{itemize}
\item $\mathcal{A}_3$ covers $X_3\cap Q$ consisting of pairwise disjoint Jordan domains; and
\item ${\rm mod}(Q\setminus \overline{A})\geq 1$ and ${\rm mod}(Q'\setminus \phi(\overline{A}))\geq 1$ for any $A\in\mathcal{A}_3$. 
\end{itemize}
Note that $\phi(A)$ has $M$-shape and ${\rm mod}(Q'\setminus \phi(\overline{A}))\geq 1$ for any $A\in\mathcal{A}_3$.   
By the Koebe distortion theorem, the family $\{g\circ\phi(A)\mid A\in\mathcal{A}_3\}$ has $N$-uniform shape, where $N$ is a constant depending only on $M$. It follows that
$$\sum_{A\in\mathcal{A}_3} \operatorname{diam}(g\circ \phi(A))^2\leq\sum_{A\in\mathcal{A}_3}\frac{4N^2}{\pi}\operatorname{area}(g\circ\phi(A))\leq 4N^2.$$

Now we have an open covering $\mathcal{A}:=\bigcup_{k=1}^3\mathcal{A}_k$ of $\bigcup_{k=1}^3 X_k$ satisfying that 
\begin{itemize}
\item $\sum_{A\in\mathcal{A}} \chi_A\leq C_1$;
\item $\sum_{A\in\mathcal{A}} \operatorname{diam}(g\circ \phi(A))^2 \leq C_2$; and 
\item $A$ has $M$-shape and ${\rm mod}(Q\setminus \overline{A})\geq 1$ for any $A\in \mathcal{A}$, 
\end{itemize}
where $C_1=C+C+1$ and $C_2 = C_2(H,M)
=16 H^2 C+\frac{4C}{\pi}+4N^2$. 

For each $A\in\mathcal{A}$, choose a point $x_A\in A$ so that $A':=B(x_A,\frac{\operatorname{diam}(A)}{2M})\subset A$, 
and let $\hat A = B(x_A,2 \operatorname{diam}(A))$. 
By ${\rm mod}(Q\setminus \overline{A})\geq 1$ and \cite[Lemma 1]{YZ}, for each rectifiable curve $\gamma$ that joins two points in $\partial Q$ and intersects some $A\in\mathcal{A}$, there is a universal constant $C_3>0$ such that $\operatorname{length}(\gamma \cap \hat A)\geq C_3 \operatorname{diam}(A)$. Define 
$$\rho = \sum_{A\in\mathcal{A}} a_A\chi_{\hat A},\ \ a_A = \frac{\operatorname{diam}(g\circ\phi(A))}{\operatorname{diam}(A)}.$$ 
It follows from $\operatorname{length}(\phi(X_4))=0$ that $\operatorname{length}(g\circ\phi(X_4))=0$. 
For any $\gamma\in\Gamma$, 
\begin{align*}
&\int_\gamma \rho(z)|dz| 
= \sum_{A\in\mathcal{A}} a_A\int_\gamma \chi_{\hat A}|dz| 
\geq \sum_{A\in\mathcal{A},A\cap\gamma\neq\emptyset} a_A\int_\gamma \chi_{\hat A}|dz| \\
&= \sum_{A\in\mathcal{A},\ A\cap\gamma\neq\emptyset} \operatorname{diam}(g\circ\phi(A)) \frac{\operatorname{length}(\gamma\cap \hat A)}{\operatorname{diam}(A)}
\geq C_3 \sum_{A\in\mathcal{A},\ A\cap\gamma\neq\emptyset} \operatorname{diam}(g\circ\phi(A))\\
&\geq C_3 \operatorname{length}(g\circ\phi(\gamma)\setminus g\circ\phi(X_4))
=C_3 \operatorname{length}(g\circ\phi(\gamma))
\geq \sqrt{2} C_3. 
\end{align*}
By \cite[Lemma 4.2]{Bojarski}, there is a constant $C_4$ depending only on $\frac{\operatorname{diam}(\hat A)}{\operatorname{diam}(A')} = 4M$ such that 
\begin{align*}
&\int_\mathbb{C} \rho^2 dxdy 
= \int\bigg(\sum_{A\in\mathcal{A}} a_A\chi_{\hat A}\bigg)^2 
\leq C_4 \int\bigg(\sum_{A\in\mathcal{A}} a_A\chi_{A'}\bigg)^2 \\
& \leq C_4 \int\bigg(\sum_{A\in\mathcal{A}} a_A\chi_{A}\bigg)^2
\leq C_4 \int \sum_{A\in\mathcal{A}} a_A^2\chi_{A}\sum_{A\in\mathcal{A}} \chi_{A}
\leq C_1 C_4\int \sum_{A\in\mathcal{A}} a_A^2\chi_{A}\\
&= C_1 C_4\sum_{A\in\mathcal{A}} a_A^2 \operatorname{area}(A) = C_1 C_4\sum_{A\in\mathcal{A}} \operatorname{diam}(g\circ \phi(A))^2 \frac{\operatorname{area}(A)}{\operatorname{diam}(A)^2}\\
&\leq \frac{\pi}{4} C_1 C_4 \sum_{A\in\mathcal{A}} \operatorname{diam}(g\circ \phi(A))^2
\leq \frac{\pi}{4} C_1 C_2 C_4.  
\end{align*}
Therefore $\lambda\geq \frac{8C_3^2}{\pi C_1 C_2 C_4}$. Letting $K = \frac{\pi C_1 C_2 C_4}{8C_3^2}$ completes the proof. 
\end{proof}

\bibliographystyle{alpha} 
\bibliography{references}

\newcommand{\etalchar}[1]{$^{#1}$}
\begin{thebibliography}{QRWY15}

\bibitem[Boj88]{Bojarski}
B.~Bojarski.
\newblock {Remarks on Sobolev imbedding inequalities}.
\newblock In {\em Complex analysis, Joensuu 1987}, volume 1351 of {\em Lecture
  Notes in Math.}, pages 52--68. Springer-Verlag, Berlin, 1988.

\bibitem[BOPT14]{BOPT}
A.~Blokh, L.~Oversteegen, R.~Ptacek, and V.~Timorin.
\newblock {The main cubioid}.
\newblock {\em Nonlinearity}, 27(8):1879--1897, 2014.

\bibitem[DS22]{DS}
K.~Drach and D.~Schleicher.
\newblock {Rigidity of Newton dynamics}.
\newblock {\em Adv. Math.}, 408:64 pp, 2022.

\bibitem[Fau92]{Faught}
J.~D. Faught.
\newblock {\em {Local connectivity in a family of cubic polynomials}}.
\newblock PhD thesis, Cornell University, 1992.

\bibitem[GWW25]{GWW}
Y.~Gao, X.~Wang, and Y.~Wang.
\newblock {Boundaries of the bounded hyperbolic components of polynomials}.
\newblock 2025.
\newblock arXiv:2504.16496.

\bibitem[HK95]{HK}
J.~Heinonen and P.~Koskela.
\newblock {Definitions of quasiconformality}.
\newblock {\em Invent. Math.}, 120(1):61--79, 1995.

\bibitem[Kel65]{Kelingos}
J.~A. Kelingos.
\newblock {Characterizations of quasiconformal mappings in terms of harmonic
  and hyperbolic measure}.
\newblock {\em Ann. Acad. Sci. Fenn. Ser. A I}, (368):16 pp, 1965.

\bibitem[Kiw01]{Kiwi01}
J.~Kiwi.
\newblock {Rational laminations of complex polynomials}.
\newblock In {\em {Laminations and Foliations in Dynamics, Geometry and
  Topology}}, volume 269 of {\em Contemp. Math.}, pages 111--154. Amer. Math.
  Soc., Providence, RI, 2001.

\bibitem[Kiw04]{Kiwi04}
J.~Kiwi.
\newblock $\mathbb{R}$eal laminations and the topological dynamics of complex
  polynomials.
\newblock {\em Adv. Math.}, 184(2):207--267, 2004.

\bibitem[KK00]{KK}
S.~Kallunki and P.~Koskela.
\newblock {Exceptional sets for the definition of quasiconformality}.
\newblock {\em Amer. J. Math.}, 122(4):735--743, 2000.

\bibitem[KL09a]{KL-lc}
J.~Kahn and M.~Lyubich.
\newblock {Local connectivity of Julia sets for unicritical polynomials}.
\newblock {\em Ann. of Math. (2)}, 170(1):413--426, 2009.

\bibitem[KL09b]{KL-lem}
J.~Kahn and M.~Lyubich.
\newblock {The quasi-additivity law in conformal geometry}.
\newblock {\em Ann. of Math. (2)}, 169(2):561--593, 2009.

\bibitem[KSvS07]{KSS}
O.~Kozlovski, W.~Shen, and S.~van Strien.
\newblock {Rigidity for real polynomials}.
\newblock {\em Ann. of Math. (2)}, 165(3):749–841, 2007.

\bibitem[KvS09]{KS}
O.~Kozlovski and S.~van Strien.
\newblock {Local connectivity and quasi-conformal rigidity of
  non-renormalizable polynomials}.
\newblock {\em Proc. London Math. Soc. (3)}, 99(2):275--296, 2009.

\bibitem[Luo24]{Luo}
Y.~Luo.
\newblock {On geometrically finite degenerations I: boundaries of main
  hyperbolic components}.
\newblock {\em J. Eur. Math. Soc.}, 26(10):3793--3861, 2024.

\bibitem[Lyu97]{Lyu}
M.~Lyubich.
\newblock {Dynamics of quadratic polynomials I, II}.
\newblock {\em Acta Math.}, 178(2):185--247, 247--297, 1997.

\bibitem[McM94]{McM}
C.~T. McMullen.
\newblock {\em {Complex Dynamics and Renormalization}}, volume 135 of {\em Ann.
  of Math. Stud.}
\newblock Princeton Univ. Press, Princeton, NJ, 1994.

\bibitem[Mil00]{Mil-lc}
J.~Milnor.
\newblock {Local connectivity of Julia sets: expository lectures}.
\newblock In {\em The Mandelbrot Set, Theme and Variations}, volume 274 of {\em
  London Math. Soc. Lecture Note Ser.}, pages 67--116. Cambridge Univ. Press,
  Cambridge, 2000.

\bibitem[Mil06]{Mil}
J.~Milnor.
\newblock {\em {Dynamics in One Complex Variable}}, volume 160 of {\em Ann. of
  Math. Stud.}
\newblock Princeton Univ. Press, Princeton, NJ, 2006.

\bibitem[Mil12]{MilHyperCompo}
J.~Milnor.
\newblock {Hyperbolic components}.
\newblock In {\em Conformal Dynamics and Hyperbolic Geometry}, volume 573 of
  {\em Contemp. Math.}, pages 183--232. Amer. Math. Soc., Providence, RI, 2012.

\bibitem[PQR{\etalchar{+}}10]{PQRTY}
W.~Peng, W.~Qiu, P.~Roesch, L.~Tan, and Y.~Yin.
\newblock {A tableau approach of the KSS nest}.
\newblock {\em Conform. Geom. Dyn.}, 14:35--67, 2010.

\bibitem[PT09]{PT09}
C.~L. Petersen and L.~Tan.
\newblock {Analytic coordinates recording cubic dynamics}.
\newblock In D.~Schleicher, editor, {\em Complex Dynamics: Families and
  Friends}, pages 413--449. A K Peters, Ltd., Wellesley, MA, 2009.

\bibitem[PT15]{Peng-Tan}
W.~Peng and L.~Tan.
\newblock {Quasi-conformal rigidity of multicritical maps}.
\newblock {\em Trans. Amer. Math. Soc.}, 367(2):1151--1182, 2015.

\bibitem[QRWY15]{QRWY}
W.~Qiu, P.~Roesch, X.~Wang, and Y.~Yin.
\newblock {Hyperbolic components of McMullen maps}.
\newblock {\em Ann. Sci. \'Ec. Norm. Sup\'er. (4)}, 48(3):703--737, 2015.

\bibitem[QY09]{QY}
W.~Qiu and Y.~Yin.
\newblock {Proof of the Branner-Hubbard conjecture on Cantor Julia sets}.
\newblock {\em Sci. China Ser. A}, 52(1):45--65, 2009.

\bibitem[Roe07]{Roesch}
P.~Roesch.
\newblock {Hyperbolic components of polynomials with a fixed critical point of
  maximal order}.
\newblock {\em Ann. Sci. \'Ec. Norm. Sup\'er. (4)}, 40(6):901--949, 2007.

\bibitem[RWY17]{RWY}
P.~Roesch, X.~Wang, and Y.~Yin.
\newblock {Moduli space of cubic Newton maps}.
\newblock {\em Adv. Math.}, 322:1--59, 2017.

\bibitem[RY22]{RY}
P.~Roesch and Y.~Yin.
\newblock {Bounded critical Fatou components are Jordan domains for
  polynomials}.
\newblock {\em Sci. China Math.}, 65(2):331--358, 2022.

\bibitem[RYZ24]{RYZ}
P.~Roesch, Y.~Yin, and J.~Zeng.
\newblock {Rigidity of non-renormalizable Newton maps}.
\newblock {\em Sci. China Math.}, 67(4):855--872, 2024.

\bibitem[She03]{Shen}
W.~Shen.
\newblock On the measurable dynamics of real rational functions.
\newblock {\em Ergod. Th. and Dynam. Sys.}, 23(3):957--983, 2003.

\bibitem[Sma07]{Smania}
D.~Smania.
\newblock {Puzzle geometry and rigidity: the Fibonacci cycle is hyperbolic}.
\newblock {\em J. Amer. Math. Soc.}, 20(3):629--673, 2007.

\bibitem[Wan21]{Wang21}
X.~Wang.
\newblock {Hyperbolic components and cubic polynomials}.
\newblock {\em Adv. Math.}, 379:42 pp, 2021.

\bibitem[YZ10]{YZ}
Y.~Yin and Y.~Zhai.
\newblock {No invariant line fields on Cantor Julia sets}.
\newblock {\em Forum Math.}, 22(1):75--94, 2010.

\bibitem[YZ23]{YZh}
J.~Yang and R.~Zhang.
\newblock {Rigidity of bounded type cubic Siegel polynomials}.
\newblock 2023.
\newblock arXiv:2311.00431.

\bibitem[Zha08]{Zhai}
Y.~Zhai.
\newblock {Rigidity for rational maps with Cantor Julia sets}.
\newblock {\em Sci. China Ser. A}, 51(1):79--92, 2008.

\end{thebibliography}

\end{document}